\newtheorem{theorem}{Theorem}[section]
\newtheorem{prop}[theorem]{Proposition}
\newtheorem{proposition}[theorem]{Proposition}
\newtheorem{thm}[theorem]{Theorem}
\newtheorem{lemma}[theorem]{Lemma}
\newtheorem{assumption}[theorem]{Assumption}
\theoremstyle{definition} 
\newtheorem{defn}[theorem]{Definition}
\newtheorem{rmk}[theorem]{Remark}
\newcommand{\cE}{\mathcal{E}}
\newcommand{\cU}{\mathcal{U}}
\newcommand{\cX}{\mathcal{X}}
\newcommand{\C}{\bb{C}}
\newcommand{\qu}{/\kern-.7ex/}
\newcommand{\lqu}{\backslash \kern-.7ex \backslash}
\newcommand{\bb}[1]{\ensuremath{\mathbb{#1}}}
\title{Seidel elements and mirror transformations for toric stacks}
\author{Fenglong You} 
\address{Department of Mathematics\\ The Ohio State University\\
  100 Math Tower\\
  231 West 18th Ave.\\Columbus, OH 43210, USA}
\email{you.111@osu.edu}
\keywords{Seidel elements, mirror transformations, Batyrev relations, Weak Fano, toric Deligne-Mumford stacks}
\begin{document}
\begin{abstract} We give a precise relation between the mirror transformation and the Seidel elements for weak Fano toric Deligne-Mumford stacks. Our result generalizes the corresponding result for toric varieties proved by Gonz\'alez and Iritani in \cite{GI}.
\end{abstract}
\maketitle 

\begin{center}
\today
\end{center}
\tableofcontents

\section{Introduction}

In \cite{GI}, Gonz\'alez and Iritani gave a precise relation between the mirror map and the Seidel elements for a smooth projective weak Fano toric variety $X$. The goal of this paper is to generalize the main theorem of \cite{GI} to a smooth projective weak Fano toric Deligne-Mumford stack $\mathcal{X}$. 

Let $\mathcal{X}$ be a smooth projective weak Fano toric Deligne-Mumford stack, the mirror theorem can be stated as an equality between the $I$-function and the $J$-function via a change of coordinates, called mirror map (or mirror transformation). We refer to \cite {CCIT} and section 4.1 of \cite{Iritani} for further discussions. 

Let $Y$ be a monotone symplectic manifold. 
For a loop $\lambda$ 
in the group of Hamiltonian symplectomorphisms 
on $Y$, Seidel \cite{Seidel}  constructed an invertible element 
$S(\lambda)$ in (small) quantum cohomology 
counting sections of the associated Hamiltonian $Y$-bundle 
$E_{\lambda}\rightarrow \mathbb{P}^{1}$. 
The Seidel element $S(\lambda)$ 
defines an element in $Aut(QH(Y))$ 
via quantum multiplication and the map 
$\lambda\mapsto S(\lambda)$ gives a representation of 
$\pi_{1}(Ham(Y))$ on $QH(Y)$. 
The construction was extended to all symplectic manifolds by McDuff and Tolman in \cite{MT}. 
Let $D_{1},\ldots, D_{m}$ be the classes in $H^{2}(X)$ 
Poincar\'e dual to the toric divisors. 
When the loop $\lambda$ is a circle action, 
McDuff and Tolman \cite{MT} considered 
the Seidel element $\tilde{S}_{j}$ 
associated to an action $\lambda_{j}$ that fixes the toric divisor $D_{j}$. The definition of Seidel representation and Seidel element were extended to symplectic orbifolds by Tseng-Wang in \cite{TW}.

Given a circle action on $X$ (resp. $\mathcal{X}$), 
the Seidel element in \cite{GI} (resp. \cite{TW}) 
is defined using the small quantum cohomology ring. 
In this paper, we need to define it, for smooth projective Deligne-Mumford stack, 
with deformed quantum cohomology to include the bulk deformations. 
For weak Fano toric Deligne-Mumford stack, 
the mirror theorem in \cite{Iritani} shows that 
the mirror map $\tau(y)\in H^{\leq 2}_{orb}(\mathcal{X})$, 
therefore, we will only need bulk deformations with 
$\tau\in H^{\leq 2}_{orb}(\mathcal{X})$.

We consider the Seidel element 
$\tilde{S}_{j}$ associated to the toric divisor 
$D_{j}$ as well as the Seidel element $\tilde{S}_{m+j}$
corresponding to the box element $s_{j}$. 
The Seidel element in definition \ref{Seidel element} 
shows that $S=q_{0}\tilde{S}$ is 
a pull-back of a coefficient of the 
$J$-function $J_{\mathcal{E}_{j}}$ of 
the associated orbifiber bundle 
$\mathcal{E}_{j}$, hence we can use 
the mirror theorem for $\mathcal{E}_{j}$ 
to calculate $\tilde {S}_{j}$ when 
$\mathcal{E}_{j}$ is weak Fano. 

We extend the definition of the Batyrev element $\tilde{D}_{j}$ 
to weak Fano toric Deligne-Mumford stacks 
via partial derivatives of the mirror map 
$\tau(y)$. As analogues of 
the Seidel elements in B-model, 
the Batyrev elements can be explicitly 
computed from the $I$-function of 
$\mathcal{X}$. The following theorem states that 
the Seidel elements and the Batyrev elements 
only differ by a multiplication of a correction function. 
\begin{thm}
Let $X$ be a smooth projective toric Deligne-Mumford stack with $\rho^{S}\in cl\left(C^{S}(\mathcal{X})\right)$.
\begin{enumerate}
\item the Seidel element $\tilde{S}_{j}$ associated to the toric divisor $D_{j}$ is given by 
\[
\tilde{S}_{j}\left(\tau(y)\right)=exp\left(-g_{0}^{(j)}(y)\right)\tilde{D}_{j}(y)
\]
where $\tau(y)$ is the mirror map of $\cX$ and the function $g_{0}^{(j)}$ is given explicitly in (\ref{correction coefficient 1});
\item the Seidel element $\tilde{S}_{m+j}$ corresponding to the box element $s_{j}$ is given by 
\[
\tilde{S}_{m+j}\left(\tau(y)\right)=exp\left(-g_{0}^{(m+j)}\right)y^{-D_{m+j}^{S\vee}}\tilde{D}_{m+j}(y),
\]
where $\tau(y)$ is the mirror map of $\cX$ and the function $g_{0}^{(m+j)}$ is given explicitly in (\ref{correction coefficient 2}).
\end{enumerate}
\end{thm}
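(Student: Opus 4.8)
The plan is to compute both the Seidel and the Batyrev elements through the $I$-function of the associated orbifiber bundle $\mathcal{E}_{j}\to\mathbb{P}^1$ and to identify the correction function as the $I$-to-$J$ normalization factor. First I would invoke Definition \ref{Seidel element}, which realizes $S=q_{0}\tilde{S}_{j}$ as the pullback of a designated coefficient of the $J$-function $J_{\mathcal{E}_{j}}$. The hypothesis $\rho^{S}\in cl(C^{S}(\mathcal{X}))$ is exactly the condition guaranteeing that $\mathcal{E}_{j}$ is weak Fano, so that the mirror theorem of \cite{CCIT, Iritani} applies and expresses $J_{\mathcal{E}_{j}}$ as the $I$-function $I_{\mathcal{E}_{j}}$ after the mirror transformation. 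This reduces the statement to an identity among explicit hypergeometric series.

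Next I would write $I_{\mathcal{E}_{j}}$ explicitly from the toric and stacky data. Since the fan of $\mathcal{E}_{j}$ extends that of $\mathcal{X}$ by the base $\mathbb{P}^1$-direction twisted by the circle action $\lambda_{j}$, the series $I_{\mathcal{E}_{j}}$ is $I_{\mathcal{X}}$ multiplied by an extra hypergeometric factor in the section-class variable, and the Seidel element is obtained by extracting the appropriate power in that variable. Crucially, the fundamental-class component of the $z^{0}$-part of $I_{\mathcal{E}_{j}}$ is not $1$ but a scalar series $\exp(g_{0}^{(j)}(y))$, precisely the factor $(\ref{correction coefficient 1})$ measuring the failure of $I_{\mathcal{E}_{j}}$ to already be in normalized ($J$-function) form. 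Passing from $I_{\mathcal{E}_{j}}$ to the normalized $J_{\mathcal{E}_{j}}$ divides by this scalar, which is the source of the prefactor $\exp(-g_{0}^{(j)})$.

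On the Batyrev side, $\tilde{D}_{j}$ is defined through the partial derivative $\partial_{j}\tau(y)$ of the mirror map of $\mathcal{X}$, and this derivative is read directly off $I_{\mathcal{X}}$. Matching the coefficient extracted from $I_{\mathcal{E}_{j}}$ against the $I_{\mathcal{X}}$-derivative identifies it with $\tilde{D}_{j}(y)$ up to the normalization scalar above, which yields $\tilde{S}_{j}(\tau(y))=\exp(-g_{0}^{(j)}(y))\tilde{D}_{j}(y)$ and proves (i). For (ii) the argument runs in parallel, except that the relevant section class of $\mathcal{E}_{m+j}$ lands in the twisted sector attached to the box element $s_{j}$; the identification of that twisted sector with the corresponding component for $\mathcal{X}$ carries an age shift, which materializes as the monomial prefactor $y^{-D_{m+j}^{S\vee}}$, while the analogous normalization scalar produces $\exp(-g_{0}^{(m+j)})$ as in $(\ref{correction coefficient 2})$.

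The main obstacle will be the stacky bookkeeping in the explicit computation of $I_{\mathcal{E}_{j}}$: one must correctly track the box elements and their ages through the hypergeometric modification classes, verify that the coefficient extracted from $J_{\mathcal{E}_{j}}$ indeed lands in the intended (possibly twisted) sector of $QH_{orb}(\mathcal{X})$, and confirm that $\rho^{S}\in cl(C^{S}(\mathcal{X}))$ is equivalent to the weak Fano condition on $\mathcal{E}_{j}$ needed for the degree $\leq 2$ mirror map so that $I_{\mathcal{E}_{j}}=J_{\mathcal{E}_{j}}$ holds. Pinning the twisted-sector shift to exactly $y^{-D_{m+j}^{S\vee}}$ is the delicate point that genuinely distinguishes the Deligne-Mumford case from the toric-variety computation of González and Iritani in \cite{GI}.
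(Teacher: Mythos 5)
Your high-level route is the same as the paper's: realize $q_{0}\tilde{S}_{j}$ as a coefficient of $J_{\mathcal{E}_{j}}$ (Proposition \ref{J-function-Seidel}), pass from $J_{\mathcal{E}_{j}}$ to $I_{\mathcal{E}_{j}}$ via the mirror theorem (legitimate, since Lemma \ref{weak-fano-lemma} gives $\rho^{S}_{\mathcal{E}_{j}}\in cl(C^{S}_{\mathcal{E}_{j}})$), and compare the result with the Batyrev element read off from $I_{\mathcal{X}}$. However, your mechanism for the correction factor is wrong. Under the weak Fano hypothesis the fundamental-class $z^{0}$-part of $I_{\mathcal{E}_{j}}$ is exactly $1$ (this is visible in the expansion used in Proposition \ref{g^(j)-indep-y_0}); there is no scalar normalization series to divide by, so $\exp(-g_{0}^{(j)})$ cannot arise as an ``$I$-to-$J$ normalization''. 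In the paper, $g_{0}^{(j)}$ in (\ref{correction coefficient 1}) is the coefficient of $p_{0}z^{-1}$ in $I_{\mathcal{E}_{j}}$, i.e.\ the section-class component of the mirror map of $\mathcal{E}_{j}$, $\log q_{0}=\log y_{0}+g_{0}^{(j)}(y)$. The exponential enters only because Proposition \ref{J-function-Seidel} extracts the coefficient of $q_{0}z^{-2}$, whereas the $I$-function expansion produces $G_{1}^{(j)}$ as the coefficient of $y_{0}z^{-2}$; substituting $y_{0}=q_{0}e^{-g_{0}^{(j)}(y)}$ gives $\tilde{S}_{j}=\exp(-g_{0}^{(j)})\,\imath^{*}G_{1}^{(j)}$ (Theorem \ref{Seidel-I-function}).

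The more serious gap is that the identification of this extracted coefficient with the Batyrev element is asserted rather than proven. ``Matching the coefficient extracted from $I_{\mathcal{E}_{j}}$ against the $I_{\mathcal{X}}$-derivative'' is precisely the statement $\imath^{*}G_{1}^{(j)}(y)=\tilde{D}_{j}(y)$, which is the crux of the theorem: $G_{1}^{(j)}$ is a $z^{-2}$-coefficient of a series over $\mathbb{K}_{\mathcal{E}_{j}}$, while $\tilde{D}_{j}=\sum_{i}m_{ij}\,\partial\tau/\partial\log y_{i}$ is a $z^{-1}$-coefficient of a series over $\mathbb{K}_{\mathcal{X}}$, and these are not related by inspection. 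The paper bridges them with the differential equation of Lemma \ref{lemma:pde},
\begin{equation*}
z\frac{\partial}{\partial y_{0}}\left(y_{0}\frac{\partial}{\partial y_{0}}\right)I_{\mathcal{E}_{j}}=\left(\sum\limits^{r+l}_{i=1}m_{ij}\left(y_{i}\frac{\partial}{\partial y_{i}}\right)-y_{0}\frac{\partial}{\partial y_{0}}\right)I_{\mathcal{E}_{j}},
\end{equation*}
proven by the reindexing $\beta\mapsto\beta+[\sigma_{0}]$ together with $p_{0}^{2}=0$; expanding both sides to order $z^{-1}$ and restricting to $y_{0}=0$ yields $\imath^{*}G_{1}^{(j)}=\tilde{D}_{j}$. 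Your proposal contains no substitute for this lemma. You also never establish that $g_{i}^{(j)}=g_{i}$ and $\tau^{(j)}_{tw}=\tau_{tw}$ are independent of $y_{0}$ (Propositions \ref{g^(j)-indep-y_0} and \ref{tau-indep-y_0}); without these, the left-hand side $\tilde{S}_{j}(\tau(y))$, evaluated at the mirror map of $\mathcal{X}$ rather than of $\mathcal{E}_{j}$, is not even defined. Finally, in part (ii) the monomial $y^{-D_{m+j}^{S\vee}}$ does not come from an age shift in identifying twisted sectors: it comes from the fact that the analogue of the above PDE for $I_{\mathcal{E}_{m+j}}$ holds only with the reindexing modified to $\beta'=\beta+[\sigma_{0}]-D_{m+j}^{S\vee}$, which inserts exactly that monomial on the right-hand side.
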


It appears that the correction coefficients in the above theorem coincide with the instanton corrections in theorem 1.4 in \cite{CCLT}. This phenomenon also indicates the deformed quantum cohomology of the toric Deligne-Mumford stack $\mathcal{X}$ is isomorphic to the Batyrev ring given in \cite{Iritani}.

\section*{Acknowledgments}
The author wants to thank professor Hsian-Hua Tseng for his guidance and lots of helpful discussions.

\section{Seidel Elements and $J$-functions}
\subsection{Generalities}

In this section, we will fix our notation and construct 
the Seidel elements of smooth projective Deligne-Mumford stacks using $\tau$-deformed quantum cohomology.

Let $\mathcal{X}$ be a smooth projective Deligne-Mumford stack, equipped with a $\mathbb{C}^{\times}$ action. 

\begin{defn}
The associated orbifiber bundle of the $\mathbb{C}^{\times}$-action is the $\mathcal{X}$-bundle over $\mathbb{P}^{1}$
\begin{equation*}
\mathcal{E}:=\mathcal{X}\times\left(\mathbb{C}^{2}\setminus \{0\}\right)/\mathbb{C}^{\times}\rightarrow \mathbb{P}^{1},
\end{equation*}
where $\C^\times$ acts on $\C^{2}\setminus\{0\}$ via the standard diagonal action.
\end{defn}

Let $\phi_{1},\ldots,\phi_{N}$ 
be a basis for the orbifold cohomology ring
$H^{*}_{orb}(\mathcal{X}):=H^{*}(\mathcal{IX};\mathbb{Q})$ 
of $\mathcal{X}$, where $\mathcal{IX}$ is the inertia stack of $\mathcal{X}$. 
Let $\phi^{1},\ldots,\phi^{N}$ 
be the dual basis of $\phi_{1},\ldots,\phi_{N}$ 
with respect to the orbifold Poincar\'e pairing. 
Furthermore, let $\hat{\phi}_{1},\ldots,\hat{\phi}_{M}$ 
denote a basis for the orbifold cohomology 
$H^{*}_{orb}(\mathcal{E}):=H^{*}(\mathcal{IE};\mathbb{Q})$ of $\mathcal{E}$.
Let $\hat{\phi}^{1},\ldots,\hat{\phi}^{M}$ 
be the dual basis of $\hat{\phi}_{1},\ldots,\hat{\phi}_{M}$
 with respect to the orbifold Poincar\'e pairing. 

We will use $X$ to denote the coarse moduli space of 
$\mathcal{X}$ and use $E$ to denote the coarse moduli space of $\mathcal{E}$. 
Then the $\mathbb{C}^{\times}$ action on $\mathcal{X}$ descends to the 
$\mathbb{C}^{\times}$ action on $X$ with $E$ 
being the associated bundle. Following \cite{McDuff} and \cite{GI}, 
there is a (non-canonical) splitting 
\begin{equation*}
H^{*}(\mathcal{E};\mathbb{Q})\cong H^{*}(E;\mathbb{Q})\cong H^{*}(X;\mathbb{Q})\otimes H^{*}(\mathbb{P}^{1};\mathbb{Q})\cong H^{*}(\mathcal{X};\mathbb{Q})\otimes H^{*}(\mathbb{P}^{1};\mathbb{Q}).
\end{equation*}

According to \cite{GI}, there is a unique 
$\mathbb{C}^{\times}$-fixed component 
$F_{\text{max}}\subset X^{\mathbb{C}^{\times}}$ 
such that the normal bundle of $F_{\text{max}}$ 
has only negative $\mathbb{C}^{\times}$-weights. 
Let $\sigma_{0}$ be the section associated to a fixed point in 
$F_{\text{max}}$. Following \cite{GI}, there is a splitting defined by this maximal section.
\begin{equation}
H_{2}(\mathcal{E};\mathbb{Z})/tors \cong H_{2}(E;\mathbb{Z})/tors \cong \mathbb{Z}[\sigma_{0}]\oplus (H_{2}(X,\mathbb{Z})/tors) \cong \mathbb{Z}[\sigma_{0}]\oplus (H_{2}(\mathcal{X},\mathbb{Z})/tors).
\end{equation}

Let $NE(X)\subset H_{2}(X;\mathbb{R})$ 
denote the Mori cone, i.e. the cone generated by effective curves and 
set 
\begin{equation*}
NE(X)_{\mathbb{Z}}:=NE(X)\cap (H_{2}(X,\mathbb{Z})/tors). 
\end{equation*}
Then, by lemma 2.2 of \cite{GI}, we have 
\begin{equation}
NE(E)_{\mathbb{Z}}=\mathbb{Z}_{\geq 0}[\sigma_{0}]+NE(X)_{\mathbb{Z}}. 
\end{equation}
Let $H_{2}^{sec}(E;\mathbb{Z})$ be the affine subspace of 
$H_{2}(E,\mathbb{Z})/tors$ which consists of the classes 
that project to the positive generator of 
$H_{2}(\mathbb{P}^{1};\mathbb{Z})$, 
we set 
\begin{equation*}
NE(E)_{\mathbb{Z}}^{sec}:=NE(E)_{\mathbb{Z}}\cap H_{2}^{sec}(E;\mathbb{Z}), 
\end{equation*}
then we obtain 
\begin{equation}
NE(E)_{\mathbb{Z}}^{sec}=[\sigma_{0}]+NE(X)_{\mathbb{Z}}. 
\end{equation}

We choose a nef integral basis $\{p_{1},\ldots,p_{r}\}$ of 
$H^{2}(\mathcal{X};\mathbb{Q})$, 
then there are unique lifts of $p_{1},\ldots,p_{r}$ in $H^{2}(\mathcal{E};\mathbb{Q})$ 
which vanish on $[\sigma_{0}]$. 
By abuse of notation, we also denote these lifts as 
$p_{1},\ldots,p_{r}$, these lifts are also nef. 
Let $p_{0}$ be the pullback of the positive generator of 
$H^{2}(\mathbb{P}^{1};\mathbb{Z})$ in $H^{2}(\mathcal{E};\mathbb{Q})$. 
Therefore, $\{p_{0},p_{1},\ldots,p_{r}\}$ is an integral basis of $H^{2}(\mathcal{E};\mathbb{Q})$.

 Let $q_{0},q_{1},\ldots,q_{r}$ be the Novikov variables of 
$\mathcal{E}$ dual to $p_{0},p_{1},\ldots,p_{r}$ and 
$q_{1},\ldots,q_{r}$ be the Novikov variables of 
$\mathcal{X}$ dual to $p_{1},\ldots,p_{r}$. 
We denote the Novikov ring of $\mathcal{X}$ 
and the Novikov ring of $\mathcal{E}$ by 
\begin{equation*}
\Lambda_{\mathcal{X}}:=\mathbb{Q}[[q_{1},\ldots,q_{r}]]\quad \text{and} \quad \Lambda_{\mathcal{E}}:=\mathbb{Q}[[q_{0},q_{1},\ldots,q_{r}]],
\end{equation*}
respectively. For each $d\in NE(X)_{\mathbb{Z}}$, we write 
\begin{equation*}
q^{d}:=q_{1}^{\langle p_{1},d\rangle}\cdots q_{r}^{\langle p_{r},d\rangle}\in \Lambda_{\mathcal{X}};
\end{equation*}
and for each $\beta\in NE(E)_{\mathbb{Z}}$, we write 
\begin{equation*}
q^{\beta}:=q_{0}^{\langle p_{0},\beta\rangle}q_{1}^{\langle p_{1},\beta\rangle}\cdots q_{r}^{\langle p_{r},\beta\rangle}\in \Lambda_{\mathcal{E}}.
\end{equation*}

The $\tau$-deformed orbifold quantum product is defined as follows:
\begin{equation}
\alpha\bullet_{\tau}\beta=\sum\limits_{d\in NE(X)_{\mathbb{Z}}}\sum\limits_{l\geq0}\sum\limits_{k=1}^{N}\frac{1}{l!}\langle\alpha,\beta,\tau,\ldots,\tau,\phi_{k}\rangle^{\mathcal{X}}_{0,l+3,d}q^{d}\phi^{k},
\end{equation}
\noindent the associated quantum cohomology ring is denoted by 
\begin{equation*}
QH_{\tau}(\mathcal{X}):=(H(\mathcal{X})\otimes_{\mathbb{Q}}\Lambda_{\mathcal{X}},\bullet_{\tau}).
\end{equation*}

\begin{defn}\label{Seidel element}
The Seidel element of $\mathcal{X}$ is the class 
\begin{equation}\label{seidelelement}
S(\hat{\tau}):=\sum\limits_{\alpha}\sum\limits_{\beta\in NE(E)^{sec}_{\mathbb{Z}}}\sum\limits_{l\geq0}\frac{1}{l!}\langle \textbf{1},\hat{\tau}_{tw},\ldots,\hat{\tau}_{tw},\imath_{*}\phi_{\alpha}\psi\rangle^{\mathcal{E}}_{0,l+2,\beta}\phi^{\alpha}e^{\langle \hat{\tau}_{0,2},\beta\rangle},
\end{equation}
in $QH_{\tau}(\mathcal{X})\otimes_{\Lambda_{\mathcal{X}}}\Lambda_{\mathcal{E}}$. 
Here $\imath:\mathcal{X}\rightarrow \mathcal{E}$ 
is the inclusion of a fiber, and
\begin{equation*}
\imath_{*}:H^{*}(\mathcal{IX};\mathbb{Q})\rightarrow H^{*+2}(\mathcal{IE};\mathbb{Q})
\end{equation*}
is the Gysin map. Moreover,
\begin{equation*}
e^{\langle \hat{\tau}_{0,2},\beta\rangle}= q^{\beta}=q_{0}^{\langle p_{0},\beta\rangle}\cdots q_{r}^{\langle p_{r},\beta \rangle},
\end{equation*} 
where 
\begin{equation*}
\hat{\tau}_{0,2}=\sum\limits_{a=0}^{r}p_{a}logq_{a}\in H^{2}(\mathcal{E})
\quad \text{and} \quad 
\hat{\tau}=\hat{\tau}_{0,2}+\hat{\tau}_{tw}\in H^{\leq 2}_{orb}(\mathcal{E}).
\end{equation*}
The Seidel element can be factorized as 
\begin{equation}
S(\hat{\tau})=q_{0}\tilde{S}(\hat{\tau}), \quad \text{with} \quad 
\tilde{S}(\hat{\tau})\in QH_{\tau}(\mathcal{X}).
\end{equation}
\end{defn}

\subsection{J-functions}

We will explain the relation between the Seidel element and 
the $J$-function of the associated bundle $\mathcal{E}$.

\begin{defn}

The $J$-function of $\mathcal{E}$ is the cohomology valued function 

\begin{equation}
J_{\mathcal{\mathcal{E}}}(\hat{\tau},z)=e^{\hat{\tau}_{0,2}/z}\left(1+\sum\limits_{\alpha}\sum\limits_{(\beta,l)\neq (0,0), \beta\in NE(E)_{\mathbb{Z}}}\frac{e^{\langle \hat{\tau}_{0,2},\beta\rangle}}{l!}\langle \textbf{1}, \hat{\tau}_{tw},\ldots,\hat{\tau}_{tw},\frac{\hat{\phi}_{\alpha}}{z-\psi}\rangle^{\mathcal{E}}_{0,l+2,\beta}\hat{\phi}^{\alpha}\right),
\end{equation}
where $\frac{\hat{\phi}_{\alpha}}{z-\psi}=\sum\limits_{n\geq 0}z^{-1-n}\hat{\phi}_{\alpha}\psi^{n}$.

\end{defn}

Note that when $n=0$, we will have 
\begin{enumerate}
\item $\sum\limits_{\alpha}\langle \textbf{1}, \hat{\tau}_{tw},\ldots,\hat{\tau}_{tw},\hat{\phi}_{\alpha}\rangle^{\mathcal{E}}_{0,l+2,\beta}\hat{\phi}^{\alpha}=0, \quad \text{for} \quad (l,\beta)\neq (1,0)$; 
\item $\sum\limits_{\alpha}\langle \textbf{1}, \hat{\tau}_{tw},\ldots,\hat{\tau}_{tw},\hat{\phi}_{\alpha}\rangle^{\mathcal{E}}_{0,l+2,\beta}\hat{\phi}^{\alpha}=\hat{\tau}_{tw}, \quad \text{for} \quad (l,\beta)=(1,0)$.
\end{enumerate}

The $J$-function can be expanded in terms of powers of $z^{-1}$ as follows:
 
\begin{equation}
J_{\mathcal{\mathcal{E}}}(\hat{\tau},z)=e^{\sum\limits_{a=0}^{r}p_{a}logq_{a}/z}\left(1+z^{-1}\hat{\tau}_{tw}+z^{-2}\sum\limits_{n=0}^{\infty}F_{n}(q_{1},\ldots,q_{r};\hat{\tau})q_{0}^{n}+O(z^{-3})\right),
\end{equation}
\noindent where 
\begin{equation}
F_{n}(q_{1},\ldots,q_{r};\hat{\tau})=\sum\limits_{\alpha=1}^{M}\sum\limits_{d\in NE(X)_{\mathbb{Z}}}\sum\limits_{l\geq0}\frac{1}{l!}\langle \textbf{1},\hat{\tau}_{tw},\ldots,\hat{\tau}_{tw},\hat{\phi}_{\alpha}\psi\rangle^{\mathcal{E}}_{0,l+2,d+n\sigma_{0}}q^{d}\hat{\phi}^{\alpha}
\end{equation}

\begin{prop}\label{J-function-Seidel}
The Seidel element corresponding to the $\mathbb{C}^{\times}$ action on $\mathcal{X}$  is given by 
\begin{equation}
S(\hat{\tau})=\imath^{*}\left(F_{1}(q_{1},\ldots,q_{r};\hat{\tau})q_{0}\right).
\end{equation}
\end{prop}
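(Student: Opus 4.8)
The plan is to show that $S(\hat{\tau})$ and $\imath^*(F_1 q_0)$ are assembled from the \emph{same} genus-zero orbifold Gromov--Witten invariants of $\mathcal{E}$, differing only in how the final insertion and the output cohomology class are recorded, and that these two bookkeeping conventions agree via the adjunction between the Gysin pushforward $\imath_*$ and the pullback $\imath^*$. The whole argument is essentially a dual-basis computation once the two expressions are written over the same index set.

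First I would unwind $\imath^*(F_1 q_0)$. By definition of $F_n$, applying $\imath^*$ to $F_1 q_0$ simply replaces $\hat{\phi}^{\alpha}$ by $\imath^*\hat{\phi}^{\alpha}\in H^*(\mathcal{IX})$ in the sum over $\alpha$, $d\in NE(X)_{\mathbb{Z}}$ and $l\geq 0$, while keeping the invariants $\langle \mathbf{1},\hat{\tau}_{tw},\ldots,\hat{\tau}_{tw},\hat{\phi}_{\alpha}\psi\rangle^{\mathcal{E}}_{0,l+2,d+\sigma_0}$ and the factors $q_0 q^d/l!$ intact. On the Seidel side I would use the identity $NE(E)^{sec}_{\mathbb{Z}}=[\sigma_0]+NE(X)_{\mathbb{Z}}$ to write every section class as $\beta=\sigma_0+d$ with $d\in NE(X)_{\mathbb{Z}}$; since the lifts $p_1,\ldots,p_r$ vanish on $[\sigma_0]$ and $\langle p_0,\sigma_0\rangle=1$, this gives $q^{\beta}=q_0 q^d$. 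Substituting into $S(\hat{\tau})$ puts it into exactly the same shape, now with the last insertion $\imath_*\phi_{\gamma}\psi$ and output class $\phi^{\gamma}\in H^*(\mathcal{IX})$, summed over the basis of $\mathcal{X}$.

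After these manipulations the proposition reduces, coefficient by coefficient in $q_0 q^d/l!$, to the purely cohomological identity
\[
\sum_{\gamma} c(\imath_*\phi_{\gamma})\,\phi^{\gamma}=\sum_{\alpha} c(\hat{\phi}_{\alpha})\,\imath^*\hat{\phi}^{\alpha}\qquad\text{in }H^*(\mathcal{IX}),
\]
where $c(\eta):=\langle \mathbf{1},\hat{\tau}_{tw},\ldots,\hat{\tau}_{tw},\eta\,\psi\rangle^{\mathcal{E}}_{0,l+2,d+\sigma_0}$ is linear in the insertion $\eta$. The right-hand side is by construction $\imath^*C$, where $C:=\sum_{\alpha}c(\hat{\phi}_{\alpha})\hat{\phi}^{\alpha}\in H^*(\mathcal{IE})$ is the class representing $c$, so that $c(\eta)=\langle C,\eta\rangle_{\mathcal{E}}$ for all $\eta$. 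For the left-hand side I would invoke the adjunction $\langle \imath_*a,b\rangle_{\mathcal{E}}=\langle a,\imath^*b\rangle_{\mathcal{X}}$, which gives $c(\imath_*\phi_{\gamma})=\langle \imath^*C,\phi_{\gamma}\rangle_{\mathcal{X}}$, and then the completeness relation $\sum_{\gamma}\langle \eta,\phi_{\gamma}\rangle_{\mathcal{X}}\phi^{\gamma}=\eta$ to conclude that the left-hand side is also $\imath^*C$. Thus both sides equal $\imath^*C$ and the identity holds.

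The main obstacle is establishing the adjunction $\langle \imath_*a,b\rangle_{\mathcal{E}}=\langle a,\imath^*b\rangle_{\mathcal{X}}$ in the orbifold setting, where the pairings are the Chen--Ruan orbifold Poincar\'e pairings on the inertia stacks $\mathcal{IX}$ and $\mathcal{IE}$. One must verify that the Gysin map $\imath_*\colon H^*(\mathcal{IX})\to H^{*+2}(\mathcal{IE})$ induced by the fiber inclusion is genuinely adjoint to $\imath^*$ with respect to these involution-corrected pairings, and that the degree shift of $2$ matches the real codimension of the fiber, so that the invariants appearing in $S(\hat{\tau})$ and in $F_1 q_0$ live in the same degrees. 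Once this compatibility of $\imath_*$ and $\imath^*$ on inertia stacks is in place, the remaining steps are the formal dual-basis and completeness manipulations described above.
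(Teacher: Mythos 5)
Your proof is correct and follows essentially the same route as the paper's: both rest on the decomposition $NE(E)^{sec}_{\mathbb{Z}}=[\sigma_{0}]+NE(X)_{\mathbb{Z}}$ (so that $q^{\beta}=q_{0}q^{d}$) combined with the duality identity $\sum_{\alpha}\hat{\phi}_{\alpha}\otimes\imath^{*}\hat{\phi}^{\alpha}=\sum_{\alpha}\imath_{*}\phi_{\alpha}\otimes\phi^{\alpha}$, which is exactly the coefficientwise identity you establish via the adjunction $\langle\imath_{*}a,b\rangle_{\mathcal{E}}=\langle a,\imath^{*}b\rangle_{\mathcal{X}}$ and dual-basis completeness. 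The only difference is that the paper cites this duality identity directly (following Gonz\'alez--Iritani), whereas you supply its proof from the Gysin adjunction.
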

\begin{proof}
The proof in here is identical to the proof given in proposition 2.5 of \cite{GI} for smooth projective varieties:

Using the duality identity 
\begin{equation*}
\sum\limits_{\alpha=1}^{M}\hat{\phi}_{\alpha}\otimes \imath^{*}\hat{\phi}^{\alpha}=\sum\limits_{\alpha=1}^{N}\imath_{*}\phi_{\alpha}\otimes \phi^{\alpha},
\end{equation*}
we can see that
\begin{equation*}
\imath^{*}F_{1}(q_{1},\ldots,q_{r};\hat{\tau})=\sum\limits_{\alpha=1}^{N}\sum\limits_{d\in NE(X)_{\mathbb{Z}}}\sum\limits_{l\geq 0}\frac{1}{l!}\langle \textbf{1},\hat{\tau}_{tw},\ldots,\hat{\tau}_{tw},\imath_{*}\phi_{\alpha}\psi\rangle^{\mathcal{E}}_{0,l+2,d+\sigma_{0}}q^{d}\phi^{\alpha}.
\end{equation*}
Hence, the conclusion follows, i.e.
\begin{equation*}
S(\hat{\tau})=\imath^{*}(F_{1}(q_{1},\ldots,q_{r};\hat{\tau})q_{0}).
\end{equation*}
\end{proof}

\section{Seidel elements corresponding to toric divisors}

\subsection{A Review of Toric Deligne-Mumford stacks}

In this section, we will define toric Deligne-Mumford stacks 
following the construction of \cite{BCS} and \cite{Iritani}.

A toric Deligne-Mumford stack is defined by 
a stacky fan $\boldsymbol{\Sigma} =(\textbf{N},\Sigma,\beta)$, 
where $\textbf{N}$ is a finitely generated abelian group, 
$\Sigma \subset \textbf{N}_{\mathbb{Q}}=\textbf{N}\otimes_{\mathbb{Z}}\mathbb{Q}$ 
is a rational simplicial fan, and $\beta:\mathbb{Z}^{m}\rightarrow \textbf{N}$ 
is a homomorphism. We assume $\beta$ has finite cokernel and 
the rank of $\textbf{N}$ is $n$. 
The canonical map $\textbf{N} \rightarrow \textbf{N}_{\mathbb{Q}}$ 
generates the 1-skeleton of the fan $\Sigma$. 
Let $\bar{b_{i}}$ be the image of $b_{i}$ under this canonical map, where $b_{i}$ is the image under $\beta$ 
of the standard basis of 
$\mathbb{Z}^{m}$. Let $\mathbb{L} \subset \mathbb{Z}^{m}$ be 
the kernel of $\beta$. Then the fan sequence is the following exact sequence
\begin{equation}0
\longrightarrow \mathbb{L} \longrightarrow \mathbb{Z}^{m} \stackrel{\beta}{\longrightarrow} \textbf{N}.
\end{equation}
Let $\beta^{\vee}: (\mathbb{Z}^{*})^{m}\rightarrow \mathbb{L}^{\vee}$ 
be the Gale dual of $\beta$ in \cite{BCS}, 
where $\mathbb{L}^{\vee}:=H^{1}(Cone(\beta)^{*})$ 
is an extension of $\mathbb{L}^{*}=Hom(\mathbb{L},\mathbb{Z})$ 
by a torsion subgroup. The divisor sequence is the following exact sequence
\begin{equation}
0\longrightarrow\textbf{N}^{*}\stackrel{\beta^{*}}{\longrightarrow} (\mathbb{Z}^{*})^{m}\stackrel {\beta^{\vee}}{\longrightarrow}\mathbb{L}^{\vee}.
\end{equation}
By applying $Hom_{\mathbb{Z}}(-,\mathbb{C}^{\times})$ 
to the dual map $\beta^{\vee}$, 
we have a homomorphism 
\begin{equation*}
\alpha:G\rightarrow (\mathbb{C}^{\times})^{m}, \quad 
\text{where} \quad G:=Hom_{\mathbb{Z}}(\mathbb{L}^{\vee},\mathbb{C}^{\times}),
\end{equation*}
and we let $G$ act on $\mathbb{C}^{m}$ via this homomorphism.

The collection of anti-cones $\mathcal{A}$ is defined as follows: 
\[
\mathcal{A}:=\left\{I: \sum_{i\not\in I}\mathbb{R}_{\geq 0}\bar{b}_{i} \in \Sigma \right\}.
\]
 Let $\cU$ denote the open subset of $\mathbb{C}^{m}$ defined by $\mathcal{A}$:
\begin{equation*}
\mathcal{U}:=\mathbb{C}^{m}\setminus \cup_{I\not\in \mathcal{A}}\mathbb{C}^{I}, 
\end{equation*}
where
\[
\mathbb{C}^{I}=\left\{ (z_{1},\ldots,z_{m}):z_{i}=0
\text{ for } i \not\in I\right\}.
\]

\begin{defn}
Following \cite{Iritani}, the toric Deligne-Mumford stack $\mathcal{X}$ 
is defined as the quotient stack 
\[
\mathcal{X}:=[\mathcal{U}/G].
\]
\end{defn}

\begin{rmk}
The toric variety $X$ associated to the fan 
$\Sigma$ is the coarse moduli space of $\mathcal{X}$ \cite{BCS}.
\end{rmk}

\begin{defn} [\cite{Iritani}]
Given a stacky fan $\boldsymbol{\Sigma}=(\textbf{N},\Sigma,\beta)$, 
we define the set of box elements $Box(\boldsymbol{\Sigma})$ as follows
\[Box(\boldsymbol{\Sigma})=:\left\{ v\in \textbf{N}: \bar{v}=\sum\limits_{k\not\in I}c_{k}\bar{b}_{k}\text{ for some }0\leq c_{k}<1, I\in \mathcal{A}\right\}\]

\end{defn}

We assume that $\Sigma$ is complete, 
then the connected components of the inertia stack $\mathcal{IX}$ 
are indexed by the elements of 
$Box(\boldsymbol{\Sigma})$ (see \cite{BCS}). 
Moreover, given $v\in Box(\boldsymbol{\Sigma})$, the age of the corresponding connected component of $\mathcal{IX}$ is defined by  $age(v):=\sum\limits_{k\not\in I} c_{k}$. 

The Picard group $Pic(\mathcal{X})$ 
of $\mathcal{X}$
can be identified with the character group 
$Hom(G,\mathbb{C}^{\times})$. Hence
\begin{equation}
\mathbb{L}^{\vee}=Hom(G,\mathbb{C}^{\times})\cong Pic(\mathcal{X}) \cong H^{2}(\mathcal{X};\mathbb{Z}).
\end{equation}

We can also use the extended stacky fans introduced by Jiang \cite{Jiang} 
to define the toric Deligne-Mumford stacks. 
Given a stacky fan $\boldsymbol{\Sigma}=(\textbf{N},\Sigma,\beta)$ 
and a finite set 
\begin{equation*}
S=\{s_{1},\ldots,s_{l}\}\subset \textbf{N}_{\Sigma}:=\left\{c\in \textbf{N} : \bar{c} \in |\Sigma|\right\}. 
\end{equation*}
The $S$-extended stacky fan is given by 
$(\textbf{N},\Sigma, \beta^{S} )$, 
where $\beta^{S}: \mathbb{Z}^{m+l}\rightarrow \textbf{N}$ 
is defined by:
\begin{equation}
\beta^{S}(e_{i}) = \left\{
     \begin{array}{lr}
       b_{i} &  1\leq i \leq m;\\
       s_{i-m} &  m+1\leq i \leq m+l.
     \end{array}
   \right.
\end{equation}
Let $\mathbb{L}^{S}$ be the kernel of 
$\beta^{S}: \mathbb{Z}^{m+l}\rightarrow \textbf{N}$. 
Then we have the following $S$-extended fan sequence
\begin{equation}\label{S-ext-fan-seq}
0\longrightarrow \mathbb{L}^{S} \longrightarrow \mathbb{Z}^{m+l} \stackrel{\beta^{S}}{\longrightarrow} \textbf{N}.
\end{equation}
By the Gale duality, we have the $S$-extended divisor sequence
\begin{equation}
0\longrightarrow\textbf{N}^{*}\stackrel{\beta^{*}}{\longrightarrow} (\mathbb{Z}^{*})^{m+l}\stackrel {\beta^{S \vee}}{\longrightarrow}\mathbb{L}^{S \vee},
\end{equation}
where $\mathbb{L}^{S \vee}:=H^{1}(Cone(\beta^{S})^{*})$.
\begin{assumption}\label{S-ext-in-box}
In the rest of the paper, we will assume the set 
\begin{equation*}
\left\{v\in Box(\boldsymbol\Sigma); age(v)\leq 1\right\}\cup \left\{b_{1}\ldots,b_{m}\right\}
\end{equation*} 
generates $\textbf{N}$ over $\mathbb{Z}$. 
And we choose the set 
\begin{equation*}
S=\{s_{1},\ldots,s_{l}\}\subset Box(\boldsymbol\Sigma)
\end{equation*} 
such that the set $\{b_{1},\ldots,b_{m},s_{1},\ldots,s_{l}\}$ 
generates $\textbf{N}$ over $\mathbb{Z}$ 
and $age(s_{j})\leq 1$ for $1\leq j \leq l$. 
\end{assumption}
Let $D_{i}^{S}$ be the image of the standard basis of 
$(\mathbb{Z}^{*})^{m+l}$ under the map $\beta^{S\vee}$, 
then there is a canonical isomorphism
\begin{equation}\label{L^S-splitting}
\mathbb{L}^{S \vee}\otimes \mathbb{Q} \cong(\mathbb{L}^{\vee}\otimes\mathbb{Q})\bigoplus\limits_{i=m+1}^{m+l}\mathbb{Q}D_{i}^{S}, 
\end{equation}
which can be constructed as follows (\cite{Iritani}): 

Since $\Sigma$ is complete, for $m<j\leq m+l$, the box element $s_{j-m}$ is contained in some cone in $\Sigma$. 
Namely, 
\begin{equation*}
s_{j-m}=\sum_{i\not\in I^{S}_{j}}c_{ji}b_{i} \quad \text{in} 
\quad \textbf{N}\otimes \mathbb{Q}, \quad
c_{ji}\geq0, \quad \exists I^{S}_{j}\in \mathcal{A}^{S}, 
\end{equation*}
where $I^{S}_{j}$ is the "anticone" of the cone containing $s_{j-m}$.

By the $S$-extended fan sequence \ref{S-ext-fan-seq} tensored with $\mathbb{Q}$, 
we have the following short exact sequence
\begin{equation*}
0\longrightarrow \mathbb{L}^{S}\otimes\mathbb{Q}\longrightarrow \mathbb{Q}^{m+l}\stackrel{\beta^{S}}{\longrightarrow}\textbf{N}\otimes\mathbb{Q}\longrightarrow 0.
\end{equation*}
Hence, there exists a unique $D^{S\vee}_{j}\in\mathbb{L}^{S}\otimes\mathbb{Q}$ such that
\begin{equation}\label{D-dual}
\langle D^{S}_{i},D^{S\vee}_{j}\rangle = \left\{
     \begin{array}{lr}
       1 &   i=j;\\
       -c_{ji} &  i\not\in I^{S}_{j};\\
       0 &  i\in I^{S}_{j}\setminus \{j\}.
     \end{array}
   \right.
\end{equation}
These vectors $D^{S\vee}_{j}$ define a decomposition
\begin{equation*}
\mathbb{L}^{S\vee}\otimes\mathbb{Q}=\text{Ker}\left(\left(D^{S\vee}_{m+1},\ldots,D^{S\vee}_{m+l}\right):\mathbb{L}^{S\vee}\otimes\mathbb{Q}\rightarrow\mathbb{Q}^{l}\right)\oplus\bigoplus\limits_{j=m+1}^{m+l}\mathbb{Q}D^{S}_{j}.
\end{equation*}
We identify the first factor $\text{Ker}(D^{S\vee}_{m+1},\ldots,D^{S\vee}_{m+l})$
with $\mathbb{L}^{\vee}\otimes \mathbb{Q}$.
Via this decomposition, we can regard  $H^{2}(\mathcal{X},\mathbb{Q})\cong \mathbb{L}^{\vee}\otimes \mathbb{Q}$ as a subspace of $\mathbb{L}^{S\vee}\otimes\mathbb{Q}$.

Let $D_{i}$ be the image of $D_{i}^{S}$ in $\mathbb{L}^{\vee}\otimes \mathbb{Q}$ 
under this decomposition. Then 
\begin{equation*}
D_{i}=0, \quad \text{for}\quad 
m+1\leq i \leq m+l. 
\end{equation*}
Let $\mathcal{A}^{S}$ be the collection of $S$-extended anti-cones, i.e. 
\begin{equation*}
\mathcal{A}^{S}:=\left\{I^{S}: \sum_{i\not\in I^{S}}\mathbb{R}_{\geq 0}\overline{\beta^{S}(e_{i})}\in \Sigma\right\}. 
\end{equation*}
Note that 
\[
\{s_{1},\ldots,s_{l}\}\subset I^{S}, \quad \forall I^{S}\in \mathcal{A}^{S}.
\]

By applying $Hom_{\mathbb{Z}}(-,\mathbb{C}^{\times})$ to the $S$-extended dual map $\beta^{\vee}$, we have a homomorphism 
\begin{equation*}
\alpha^{S}:G^{S} \rightarrow (\mathbb{C}^{\times})^{m+l}, \quad \text{where}\quad G^{S}:=Hom_{\mathbb{Z}}(\mathbb{L}^{S \vee}, \mathbb{C}^{\times}).  
\end{equation*}
We define $\mathcal{U}$ to be the open subset of $\mathbb{C}^{m+l}$ defined by $\mathcal{A}^{S}$:
\begin{equation*}
\mathcal{U}^{S}:=\mathbb{C}^{m+l}\setminus \cup_{I^{S}\not\in \mathcal{A}^{S}}\mathbb{C}^{I^{S}}=\mathcal{U}\times (\mathbb{C}^{\times})^{l},
\end{equation*}
where 
\begin{equation*}
\mathbb{C}^{I^{S}}=\left\{(z_{1},\ldots, z_{m+l}):z_{i}=0\text{ for }i\not \in I^{S}\right\}. 
\end{equation*}
Let $G^{S}$ act on $\mathcal{U}^{S}$ via $\alpha^{S}$. 
Then we obtain the quotient stack $[\mathcal{U}^{S}/G^{S}]$. 
Jiang \cite{Jiang} showed that 
\begin{equation*}
[\mathcal{U}^{S}/G^{S}]\cong [\mathcal{U}/G]=\mathcal{X}.
\end{equation*}

\subsection{Mirror theorem for toric stacks}

In \cite{CCIT}, Coates-Corti-Iritani-Tseng defined the 
$S$-extended $I$-function of a smooth toric Deligne-Mumford stack 
$\mathcal{X}$ with projective coarse moduli space and 
proved that this $I$-function is a point of Givental's Lagrangian cone $\mathcal{L}$ 
for the Gromov-Witten theory of $\mathcal{X}$. 
In this paper, we will only need this theorem for the weak Fano case. In this case, the mirror theorem will take a particularly simple form which can be stated as an equality of $I$-function and $J$-function via a change of variables, called mirror map.

To state the mirror theorem for weak Fano toric Deligne-Mumford stack, we need the following definitions. 

We define the $S$-extended K\"ahler cone $C^{S}_{\mathcal{X}}$ as
\begin{equation*}
C^{S}_{\mathcal{X}}:=\cap_{I^{S}\in \mathcal{A}^{S}}\Sigma_{i\in I^{S}}\mathbb{R}_{>0}D_{i}^{S}
\end{equation*} 
and the K\"ahler cone $C_{\mathcal{X}}$ as
\begin{equation*}
C_{\mathcal{X}}:=\cap_{I \in \mathcal{A}}\Sigma_{i\in I}\mathbb{R}_{>0}D_{i}.
\end{equation*}
Let $p^{S}_{1},\ldots,p^{S}_{r+l}$ be an integral basis of $\mathbb{L}^{S\vee}$, 
where $r=m-n$, such that $p^{S}_{i}$ 
is in the closure $cl(C^{S}_{\mathcal{X}})$ of 
the $S$-extended K\"ahler cone $C^{S}_{\mathcal{X}}$ 
for all $1\leq i \leq r+l$ and $p^{S}_{r+1},\ldots, p^{S}_{r+l}$ 
are in $\sum\limits_{i=m+1}^{m+l}\mathbb{R}_{\geq 0}D^{S}_{i}$. 
We denote the image of $p^{S}_{i}$ in 
$\mathbb{L}^{\vee}\otimes \mathbb{R}$ by 
$p_{i}$, therefore $p_{1},\ldots,p_{r}$ 
are nef and $p_{r+1},\ldots,p_{r+l}$ are zero.
We define a matrix $(m_{ia})$ by 
\[
D^{S}_{i}=\sum\limits^{r+l}_{a=1}m_{ia}p^{S}_{a}, \quad m_{ia}\in \mathbb{Z}.
\]
Then the class $D_{i}$ of toric divisor is given by
\[
D_{i}=\sum\limits^{r}_{a=1}m_{ia}p_{a}.
\]

\begin{defn} [\cite{Iritani}, Section 3.1.4]

A toric Deligne-Mumford stack $\mathcal{X}$ 
is called weak Fano if the first Chern class $\rho$ satisfies
\begin{equation*}
\rho=c_{1}(T\mathcal{X})=\sum\limits_{i=1}^{m}D_{i} \in cl(C_{\mathcal{X}}),
\end{equation*} 
where $C_{\mathcal{X}}$ is the K\"ahler cone of $\mathcal{X}$.
\end{defn}

We will need a slightly stronger condition: 
\begin{equation*}
\rho^{S}:=D_{1}^{S}+\ldots+D_{m+l}^{S} \in cl(C^{S}_{\mathcal{X}}),
\end{equation*}
where $C_{\mathcal{X}}^{S}$ is the $S$-extended K\"ahler cone. By lemma 3.3 of \cite{Iritani}, we can see that $\rho^{S}\in cl(C^{S}_{\mathcal{X}})$ implies $\rho \in cl(C_{\mathcal{X}})$. Moreover, under assumption \ref{S-ext-in-box}, we will have 
\begin{equation*}
\rho^{S}\in cl(C^{S}_{\mathcal{X}})\quad \text{ if and only if} \quad \rho \in cl(C_{\mathcal{X}}).
\end{equation*}

For a real number $r$, let $\lceil r \rceil$, $\lfloor r \rfloor$ and $\{ r \}$ 
be the ceiling, floor and fractional part of $r$ respectively. 
\begin{defn}
We define two subsets $\mathbb{K}$ and 
$\mathbb{K}_{\text{eff}}$ of $\mathbb{L}^{S}\otimes \mathbb{Q}$ as follows:
\begin{equation*}\mathbb{K}:=\left\{d\in L^{S}\otimes \mathbb{Q}; \{ i \in \{ 1,\ldots,m+l\};\langle D^{S}_{i},d\rangle \in \mathbb{Z}\} \in \mathcal{A}^{S}\right\},
\end{equation*}
\begin{equation*}
\mathbb{K}_{\text{eff}}:=\left\{d\in L^{S}\otimes \mathbb{Q}; \{ i \in \{ 1,\ldots,m+l\};\langle D^{S}_{i},d\rangle \in \mathbb{Z}_{\geq 0}\} \in \mathcal{A}^{S}\right\}.
\end{equation*}
\end{defn}
\begin{rmk}
We will use $\mathbb{K}_{\mathcal{E}_{j}}$ and 
$\mathbb{K}_{\text{eff},\mathcal{E}_{j}}$ 
to denote the corresponding sets for 
the associated bundle $\mathcal{E}_{j}$, 
and use $\mathbb{K}_{\mathcal{X}}$ and 
$\mathbb{K}_{\text{eff},\mathcal{X}}$ 
to denote the corresponding sets for $\mathcal{X}$.
\end{rmk}
\begin{defn} [\cite{Iritani}, Section 3.1.3]
The reduction function $v$ is defined as follows:
\begin{align*}
v:\mathbb{K}& \longrightarrow Box(\boldsymbol{\Sigma})\\
d & \longmapsto \sum\limits_{i=1}^{m}\lceil \langle D^{S}_{i},d\rangle\rceil b_{i}+\sum\limits_{j=1}^{l}\lceil \langle D^{S}_{m+j},d\rangle\rceil s_{j} 
\end{align*}
\end{defn}

By the $S$-extended fan exact sequence, we have 
\begin{equation*}
\sum\limits_{i=1}^{m}\langle D^{S}_{i},d\rangle b_{i}+\sum\limits_{j=1}^{l}\langle D^{S}_{m+j},d\rangle s_{j}=0 \in \textbf{N}\otimes \mathbb{Q}.
\end{equation*}
Moreover, by the definition of $\mathbb{K}$, we have 
\[
\langle D_{m+j}^{S},d\rangle\in \mathbb{Z},\quad \text{for all}\quad d\in \mathbb{K}\quad \text{and} \quad 1\leq j \leq l.  
\]
Hence, 
\begin{equation*}
v(d)=\sum\limits_{i=1}^{m}\{-\langle D^{S}_{i},d\rangle \}b_{i}+\sum\limits_{j=1}^{l}\{-\langle D^{S}_{m+j},d\rangle\} s_{j}=\sum\limits_{i=1}^{m}\{-\langle D^{S}_{i},d\rangle \}b_{i}.
\end{equation*}

By abuse of notation, we use $D_{i}$ to denote the divisor 
$\{z_{i}=0\}\subset \mathcal{X}$ and 
the cohomology class in 
$H^{2}(\mathcal{X};\mathbb{Z})\cong \mathbb{L}^{\vee}$, 
for $1\leq i \leq m$. 

We consider the $\mathbb{C}^{\times}$-action 
fixing a toric divisor $D_{j}$, $1\leq j \leq m$, 
the action of $\mathbb{C}^{\times}$ on $\mathbb{C}^{m}$ is given by 
\[
(z_{1},\ldots,z_{m})\mapsto (z_{1},\ldots,t^{-1}z_{j},\ldots,z_{m}), \quad t\in \mathbb{C}^{\times}.
\]
We can extend this to the diagonal $\mathbb{C}^{\times}$-action 
on $\mathcal{U}\times (\mathbb{C}^{2}\setminus \{0\})$ by 
\[
(z_{1},\ldots,z_{m},u,v)\mapsto (z_{1},\ldots,t^{-1}z_{j},\ldots,z_{m},tu,tv), \quad t\in \mathbb{C}^{\times}.
\]
The associated bundle $\mathcal{E}_{j}$ 
of the $\mathbb{C}^{\times}$-action on $\mathcal{X}$ 
is given by 
\begin{equation*}
\mathcal{E}_{j}=\mathcal{U}\times(\mathbb{C}^{2}\setminus\{0\})/G\times\mathbb{C}^{\times}.
\end{equation*} 
We can also use the $S$-extended stacky fan of 
$\mathcal{X}$ to define $\mathcal{E}_{j}$: 
\begin{equation*}
\mathcal{E}_{j}=\mathcal{U^{S}}\times(\mathbb{C}^{2}\setminus\{0\})/G^{S}\times\mathbb{C}^{\times}.
\end{equation*}
Therefore $\mathcal{E}_{j}$ 
is also a toric Deligne-Mumford stack. 
We can identify $H^{2}(\mathcal{E}_{j};\mathbb{Z})$ 
with the lattice of the characters of $G\times \mathbb{C}^{\times}$: 
\begin{equation}\label{splitting-H^2}
H^{2}(\mathcal{E}_{j};\mathbb{Z})\cong \mathbb{L}^{\vee}\oplus\mathbb{Z}\cong H^{2}(\mathcal{X};\mathbb{Z})\oplus \mathbb{Z}.
\end{equation}
Moreover, we have the divisor sequence 
\begin{equation*}
0\rightarrow \textbf{N}^{*}\oplus \mathbb{Z}\rightarrow (\mathbb{Z}^{*})^{m+2}\rightarrow \mathbb{L}^{\vee}\oplus \mathbb{Z}. 
\end{equation*}
And the $S$-extended divisor sequence 
\begin{equation*}
0\rightarrow \textbf{N}^{*}\oplus \mathbb{Z}\rightarrow (\mathbb{Z}^{*})^{m+l+2}\rightarrow \mathbb{L}^{S \vee}\oplus \mathbb{Z}.
\end{equation*}
Let $\hat{D}^{S}_{i} $ be the image of the standard basis of $(\mathbb{Z}^{*})^{m+l+2}$ in $\mathbb{L}^{S\vee}\oplus \mathbb{Z}$. Then 
\begin{equation}
\hat{D}^{S}_{i}=(D^{S}_{i},0), \text{ for } i\neq j; \quad \hat{D}^{S}_{j}=(D^{S}_{j},-1);\quad \hat{D}_{m+l+1}^{S}=\hat{D}_{m+l+2}^{S}=(0,1).
\end{equation}
And, 
\begin{equation}
\hat{D}_{i}=(D_{i},0), \text{ for }i\neq j;\quad \hat{D}_{j}=(D_{j},-1); \quad \hat{D}_{m+1}=\hat{D}_{m+2}=(0,1).
\end{equation}
The fan $\Sigma_{j}$ of  $\mathcal{E}_{j}$ is a rational simplicial fan contained in $N_{\mathbb{Q}}\oplus \mathbb{Q}$. The 1-skeleton is given by 
\begin{equation}
\hat{b}_{i}=(b_{i},0),\text{ for }1\leq i \leq m;\quad \hat{b}_{m+1}=(0,1);\quad \hat{b}_{m+2}=(b_{j},-1).
\end{equation}
We set 
\begin{equation*}
p_{0}:=(0,1)=\hat{D}_{m+1}=\hat{D}_{m+2}\in H^{2}(\mathcal{E}_{j};\mathbb{Q}),
\end{equation*}
then a nef integral basis $\{p_{1},\ldots,p_{r}\}$ of $H^{2}(\mathcal{X};\mathbb{Q})$ 
can be lifted to a nef integral basis 
$\{p_{0},p_{1},\ldots,p_{r}\}$ of $H^{2}(\mathcal{E}_{j};\mathbb{Q})$, under the splitting (\ref{splitting-H^2}). 
Let $p_{1}^{S},\ldots,p_{r+l}^{S}$ 
be an integral basis of $\mathbb{L}^{S\vee}$, 
such that $p_{i}$ is the image of 
$p_{i}^{S}$ in $\mathbb{L}^{\vee}\otimes \mathbb{R}$. 
Let $p_{0}^{S}, p_{1}^{S},\ldots,p_{r+l}^{S}$ 
be an integral basis of $\mathbb{L}^{S\vee}\oplus \mathbb{Z}$ and 
$p_{0}$ is the image of 
\begin{equation*}
p_{0}^{S}=\hat{D}_{m+l+1}^{S}=\hat{D}_{m+l+2}^{S}
\end{equation*}
 in $(\mathbb{L}^{\vee}\oplus\mathbb{Z})\otimes \mathbb{R}$. 
Note that $p_{r+1},\ldots,p_{r+l}$ are zero. We have 
\[C_{\mathcal{E}_{j}}^{S}=C_{\mathcal{X}}^{S}+\mathbb{R}_{>0}p_{0}^{S},\quad \rho^{S}_{\mathcal{E}_{j}}=\rho^{S}_{\mathcal{X}}+p_{0}^{S}.\]
The following result is straightforward.
\begin{lemma}\label{weak-fano-lemma}
If $\rho^{S}_{\cX}\in cl(C^{S}_{\cX})$, then $\rho^{S}_{\cE_{j}}\in cl(C^{S}_{\cE_{j}})$, for $1\leq j \leq m$.
\end{lemma}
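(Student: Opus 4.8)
The plan is to reduce everything to the two displayed identities stated immediately before the lemma,
\[
C_{\cE_{j}}^{S}=C_{\cX}^{S}+\mathbb{R}_{>0}p_{0}^{S},
\qquad
\rho^{S}_{\cE_{j}}=\rho^{S}_{\cX}+p_{0}^{S},
\]
and then to pass to closures by an elementary limiting argument. These identities are themselves immediate from the explicit description of the divisor classes $\hat{D}^{S}_{i}$ of $\cE_{j}$ together with the choice $p_{0}^{S}=\hat{D}^{S}_{m+l+1}=\hat{D}^{S}_{m+l+2}$, so I would treat them as already established and build directly on them.

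The argument itself I would carry out in three short steps. First, from the hypothesis $\rho^{S}_{\cX}\in cl(C^{S}_{\cX})$ I would choose a sequence $x_{k}\in C^{S}_{\cX}$ with $x_{k}\to\rho^{S}_{\cX}$ in $\mathbb{L}^{S\vee}\otimes\mathbb{R}$. Second, since $p_{0}^{S}=1\cdot p_{0}^{S}$ lies in $\mathbb{R}_{>0}p_{0}^{S}$, the first identity gives $x_{k}+p_{0}^{S}\in C^{S}_{\cX}+\mathbb{R}_{>0}p_{0}^{S}=C^{S}_{\cE_{j}}$ for every $k$. Third, continuity of addition yields $x_{k}+p_{0}^{S}\to\rho^{S}_{\cX}+p_{0}^{S}=\rho^{S}_{\cE_{j}}$ by the second identity, exhibiting $\rho^{S}_{\cE_{j}}$ as a limit of points of $C^{S}_{\cE_{j}}$; hence $\rho^{S}_{\cE_{j}}\in cl(C^{S}_{\cE_{j}})$, as required.

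There is essentially no obstacle here, which is why the paper labels the statement straightforward. The only point deserving care is that one should not try to identify $cl(C^{S}_{\cE_{j}})$ with $cl(C^{S}_{\cX})+\mathbb{R}_{\geq 0}p_{0}^{S}$ at the level of closures and invoke a general ``closure of a sum of cones'' fact; the sequence argument avoids this, since it only uses that the open cone $C^{S}_{\cE_{j}}$ contains the $+p_{0}^{S}$ translates of points of the open cone $C^{S}_{\cX}$, together with continuity of limits.
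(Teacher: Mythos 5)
Your proof is correct and is exactly the argument the paper intends: the paper displays the identities $C_{\cE_{j}}^{S}=C_{\cX}^{S}+\mathbb{R}_{>0}p_{0}^{S}$ and $\rho^{S}_{\cE_{j}}=\rho^{S}_{\cX}+p_{0}^{S}$ immediately before the lemma and then declares the result straightforward, leaving precisely your limiting argument to the reader. Your explicit sequence argument (and the remark about not conflating $cl(C^{S}_{\cE_{j}})$ with a sum of closures) is a faithful filling-in of that intended proof, so there is nothing to correct.
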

\begin{defn}
The $I$-function of $\cX$ is the $H^{*}_{orb}(\cX)$-valued function:
\begin{equation}
I_{\mathcal{X}}(y,z)=e^{\sum\limits_{i=1}^{r}p_{i}logy_{i}/z}
\sum\limits_{d\in \mathbb{K}_{\text{eff},\mathcal{X}}}
\prod\limits_{i=1}^{m+l}
\left(\frac{\prod_{k=\lceil \langle D^{S}_{i},d\rangle \rceil}^{\infty}
\left({D}_{i}+\left(\langle D^{S}_{i},d\rangle-k\right)z\right)}
{\prod_{k=0}^{\infty}\left({D}_{i}+\left(\langle D^{S}_{i},d\rangle-k\right)z\right)}\right)
y^{d}\textbf{1}_{v(d)},
\end{equation}
where $y^{d}=y_{1}^{\langle p_{1}^{S},d\rangle}\cdots y_{r+l}^{\langle p_{r+l}^{S},d\rangle}$. Similarly, The $I$-function of $\cE$ is the $H^{*}_{orb}(\cE)$-valued function:
\begin{equation}
 I_{\mathcal{E}_{j}}(y,z)=
e^{\sum\limits_{i=0}^{r}p_{i}logy_{i}/z}
\sum\limits_{\beta\in \mathbb{K}_{\text{eff},\mathcal{E}_{j}}}
\prod\limits_{i=1}^{m+l+2}
\left(\frac{\prod_{k=\lceil \langle \hat{D}^{S}_{i},\beta\rangle \rceil}^{\infty}
\left(\hat{D}_{i}+\left(\langle \hat{D}^{S}_{i},\beta\rangle-k\right)z\right)}
{\prod_{k=0}^{\infty}\left(\hat{D}_{i}+\left(\langle \hat{D}^{S}_{i},\beta\rangle-k\right)z\right)}
\right)y^{\beta}\textbf{1}_{v(\beta)},
\end{equation}
\noindent where  $y^{\beta}=y_{0}^{\langle p_{0}^{S},\beta,\rangle}y_{1}^{\langle p_{1}^{S},\beta\rangle}\cdots y_{r+l}^{\langle p_{r+l}^{S},\beta\rangle}$.

\end{defn}

Following section 4.1 of \cite{Iritani}, The $I$-functions of $\mathcal{X}$ and $\mathcal{E}_{j}$ can be rewritten in the form:

\begin{equation}
I_{\mathcal{X}}(y,z)=e^{\sum\limits_{i=1}^{r}p_{i}logy_{i}/z}
\sum\limits_{d\in \mathbb{K}_{\mathcal{X}}}
\prod\limits_{i=1}^{m+l}
\left(\frac{\prod_{k=\lceil \langle D^{S}_{i},d\rangle \rceil}^{\infty}
\left({D}_{i}+\left(\langle D^{S}_{i},d\rangle-k\right)z\right)}
{\prod_{k=0}^{\infty}\left({D}_{i}+\left(\langle D^{S}_{i},d\rangle-k\right)z\right)}\right)
y^{d}\textbf{1}_{v(d)},
\end{equation}
and
\begin{equation}\label{I-function-seidel-space-with-K}
 I_{\mathcal{E}_{j}}(y,z)=
e^{\sum\limits_{i=0}^{r}p_{i}logy_{i}/z}
\sum\limits_{\beta\in \mathbb{K}_{\mathcal{E}_{j}}}
\prod\limits_{i=1}^{m+l+2}
\left(\frac{\prod_{k=\lceil \langle \hat{D}^{S}_{i},\beta\rangle \rceil}^{\infty}
\left(\hat{D}_{i}+\left(\langle \hat{D}^{S}_{i},\beta\rangle-k\right)z\right)}
{\prod_{k=0}^{\infty}\left(\hat{D}_{i}+\left(\langle \hat{D}^{S}_{i},\beta\rangle-k\right)z\right)}
\right)y^{\beta}\textbf{1}_{v(\beta)},
\end{equation}
respectively, because the summand with $d\in \mathbb{K}\setminus \mathbb{K}_{\text{eff}}$ vanishes. We refer to \cite{Iritani} for more details.

\begin{thm}[\cite{Iritani}, Conjecture 4.3]\label{mirror-theorem}
Assume that $\rho^{S} \in cl(C^{S}_{\mathcal{X}})$. 
Then the $I$-function and the $J$-function satisfy the following relation:
\begin{equation}
I_{\mathcal{X}}(y,z)=J_{\mathcal{X}}(\tau(y),z)
\end{equation}
where 
\begin{equation}
\tau(y)=\tau_{0,2}(y)+\tau_{tw}(y)=\sum\limits_{i=1}^{r}(logy_{i})p_{i}+\sum\limits_{j=m+1}^{m+l}y^{D^{S\vee}_{j}}\mathfrak{D}_{j}+h.o.t.\in H^{\leq 2}_{orb}(\mathcal{X}),
\end{equation}
with 
\[\tau_{0,2}(y)\in  H^{2}(\mathcal{X}), \quad \tau_{tw}(y)\in H^{\leq 2}_{orb}(\mathcal{X})\setminus H^{2}(\mathcal{X}),
\]
\[
\mathfrak{D}_{j}=\prod\limits_{i\not\in I_{j}}D_{i}^{\lfloor c_{ji} \rfloor}\boldsymbol{1}_{v(D^{S\vee}_{j})}\in H^{*}_{orb}(\mathcal{X}).
\]
and $h.o.t.$ stands for higher order terms in $z^{-1}$.
Furthermore, $\tau(y)$ is called the mirror map and takes values in $H^{\leq 2}_{orb}(\mathcal{X})$.
\end{thm}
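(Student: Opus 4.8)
The plan is to deduce the statement from the mirror theorem of Coates--Corti--Iritani--Tseng together with a grading argument, following Section 4.1 of \cite{Iritani}. The only external input is the theorem of \cite{CCIT} quoted in Section 2.3: the $S$-extended $I$-function $I_{\mathcal{X}}(y,z)$ is a point of Givental's Lagrangian cone $\mathcal{L}_{\mathcal{X}}\subset H^{*}_{orb}(\mathcal{X})\otimes\Lambda_{\mathcal{X}}((z^{-1}))$ for the Gromov--Witten theory of $\mathcal{X}$. I would then invoke Givental's characterization of the $J$-function as the canonical slice of the cone: any family $y\mapsto f(y,z)$ lying on $\mathcal{L}_{\mathcal{X}}$ and admitting an expansion $f(y,z)=\mathbf{1}+\sigma(y)z^{-1}+O(z^{-2})$ with $\sigma(y)\in H^{*}_{orb}(\mathcal{X})$ must coincide with $J_{\mathcal{X}}(\sigma(y),z)$. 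Granting this, it suffices to check that $I_{\mathcal{X}}(y,z)$ has exactly this shape and to identify its $z^{-1}$-coefficient with the asserted $\tau(y)$; the equality $I_{\mathcal{X}}=J_{\mathcal{X}}(\tau(y),z)$ then follows formally from uniqueness.

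The heart of the argument, and where the hypothesis $\rho^{S}\in cl(C^{S}_{\mathcal{X}})$ is used, is a degree count establishing that (i) the $z^{0}$-part of $I_{\mathcal{X}}$ is exactly the unit $\mathbf{1}$ and no positive powers of $z$ occur, and (ii) the $z^{-1}$-coefficient lies in $H^{\leq 2}_{orb}(\mathcal{X})$. I would grade the ambient ring by $\deg z=\deg D_{i}=2$, $\deg\mathbf{1}_{v}=2\,age(v)$, and $\deg y^{d}=2\langle\rho,d\rangle$, under which each factor $D_{i}+(\langle D^{S}_{i},d\rangle-k)z$ is homogeneous of degree $2$, so that every summand $\mathrm{coeff}_{d}(z)\,y^{d}\mathbf{1}_{v(d)}$ of $I_{\mathcal{X}}$ is homogeneous of one fixed total degree. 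A direct homogeneity count then shows that the coefficient of $z^{-k}y^{d}$ carries orbifold cohomological degree governed by $2k-2\langle\rho,d\rangle$ (sitting in the age-shifted sector $v(d)$ when $d\neq 0$); the weak Fano inequality $\langle\rho,d\rangle\geq 0$ for every effective $d$ then rules out positive $z$-powers, pins the $z^{0}$-term to $\mathbf{1}$, and confines the $z^{-1}$-coefficient to degree $\leq 2$. Carrying out this bookkeeping carefully --- in particular handling the fractional pairings $\langle D^{S}_{i},d\rangle\notin\mathbb{Z}$, which shift the hypergeometric factor into a twisted sector and contribute to $age(v(d))$ --- is the main obstacle.

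Finally I would extract $\tau(y)$ explicitly by expanding $I_{\mathcal{X}}(y,z)$ to order $z^{-1}$. The prefactor $e^{\sum_{i=1}^{r}p_{i}\log y_{i}/z}$ contributes $\tau_{0,2}(y)=\sum_{i=1}^{r}(\log y_{i})p_{i}$; the summand $d=0$ contributes the unit; and, after the degree count of the previous paragraph, the leading new contributions at order $z^{-1}$ come from the minimal classes $d=D^{S\vee}_{j}$ for $m+1\leq j\leq m+l$, all further such contributions being absorbed into the higher-order terms $h.o.t.$ For these classes I would use the pairings \eqref{D-dual}, namely $\langle D^{S}_{j},D^{S\vee}_{j}\rangle=1$, $\langle D^{S}_{i},D^{S\vee}_{j}\rangle=-c_{ji}$ for $i\notin I^{S}_{j}$, and $0$ otherwise, to evaluate $v(D^{S\vee}_{j})=s_{j-m}$ and the residue of the hypergeometric factor, obtaining the coefficient $y^{D^{S\vee}_{j}}\mathfrak{D}_{j}$ with $\mathfrak{D}_{j}=\prod_{i\notin I_{j}}D_{i}^{\lfloor c_{ji}\rfloor}\mathbf{1}_{v(D^{S\vee}_{j})}$. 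Summing the contributions yields $\tau(y)=\sum_{i=1}^{r}(\log y_{i})p_{i}+\sum_{j=m+1}^{m+l}y^{D^{S\vee}_{j}}\mathfrak{D}_{j}+h.o.t.\in H^{\leq 2}_{orb}(\mathcal{X})$, and invoking the cone characterization of the first paragraph gives $I_{\mathcal{X}}(y,z)=J_{\mathcal{X}}(\tau(y),z)$, completing the proof.
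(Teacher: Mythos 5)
This theorem is not proved in the paper at all: it is imported verbatim as \cite{Iritani}, Conjecture 4.3, with the only justification being the quoted result of \cite{CCIT} that the $S$-extended $I$-function lies on Givental's Lagrangian cone, specialized to the weak Fano case. Your sketch correctly reconstructs the standard derivation behind that citation --- cone membership, the uniqueness characterization of the $J$-function as the slice of the cone with expansion $1+\tau(y)z^{-1}+O(z^{-2})$, and the degree count using $\rho^{S}\in cl(C^{S}_{\mathcal{X}})$ to pin down that expansion and extract the leading terms $\sum_{i}(\log y_{i})p_{i}+\sum_{j}y^{D^{S\vee}_{j}}\mathfrak{D}_{j}$ --- which is exactly the argument of Section 4.1 of \cite{Iritani} that the paper defers to, so it matches the intended approach.
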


For $\tau_{0,2}(y)=\sum\limits_{a=1}^{r}p_{a}logq_{a}\in H^{2}(\mathcal{X})$, we have 
\begin{equation*}
logq_{i}=logy_{i}+g_{i}(y_{1},\ldots,y_{r+l}),\text{ for }i=1,\ldots,r,
\end{equation*} 
where $g_{i}$ is a (fractional) power series in $y_{1},\ldots,y_{r+l}$ 
which is homogeneous of degree zero with respect to 
the degree $degy^{d}=2\langle\rho^{S}_{\mathcal{X}},d\rangle$.

By lemma \ref{weak-fano-lemma}, under the assumption of theorem \ref{mirror-theorem}, we can also apply the mirror theorem to the associated bundle $\cE_{j}$, hence we have
\begin{equation*}
I_{\mathcal{E}_{j}}(y,z)=J_{\mathcal{E}_{j}}(\tau^{(j)}(y),z),
\end{equation*}
where
\[
\tau^{(j)}(y)=\tau^{(j)}_{0,2}+\tau^{(j)}_{tw}(y)\in H^{2}(\cE_{j})\oplus\left(H^{\leq 2}_{orb}(\cE_{j})\setminus H^{2}(\cE_{j})\right)
\]
Since $\tau^{(j)}_{0,2}(y)=\sum\limits_{a=0}^{r}p_{a}logq_{a}\in H^{2}(\mathcal{E}_{j})$, therefore 
\begin{equation*}
logq_{i}=logy_{i}+g^{(j)}_{i}(y_{0},\ldots,y_{r+l}),\text{ for }i=0,\ldots,r,
\end{equation*} 
where $g^{(j)}_{i}$ is a (fractional) power series in $y_{0},y_{1},\ldots,y_{r+l}$ 
which is homogeneous of degree zero with respect to 
the degree $degy^{\beta}=2\langle\rho^{S}_{\mathcal{E}_{j}},\beta\rangle$.

\subsection{Seidel elements and mirror maps}
\begin{proposition}\label{g^(j)-indep-y_0}
The function $g_{i}^{(j)}$ does not depend on $y_{0}$ and we have 
\begin{equation*}
g_{i}^{(j)}(y_{0},\ldots,y_{r+l})=g_{i}(y_{1},\ldots,y_{r+l}),\quad \text{for}\quad i=1,\ldots,r.
\end{equation*}
\end{proposition}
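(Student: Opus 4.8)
The plan is to read both mirror maps off the $z^{-1}$-coefficients of the respective $I$-functions and to compare them term by term. Writing $I_{\mathcal{X}}=e^{\sum_{i=1}^r p_i\log y_i/z}\big(1+z^{-1}I_1^{\mathcal{X}}+O(z^{-2})\big)$ and matching with $J_{\mathcal{X}}(\tau(y),z)=e^{\tau_{0,2}/z}(1+z^{-1}\tau_{tw}+\cdots)$ under Theorem \ref{mirror-theorem}, since $\tau_{0,2}=\sum_a p_a\log q_a=\sum_a p_a(\log y_a+g_a)$ one finds $I_1^{\mathcal{X}}=\sum_a g_a p_a+\tau_{tw}$; thus $g_i$ is exactly the coefficient of $p_i$ in the untwisted (age $0$) degree-two part of $I_1^{\mathcal{X}}$, and likewise $g_i^{(j)}$ is the coefficient of $p_i$ in the untwisted degree-two part of $I_1^{\mathcal{E}_j}$. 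I would set up the comparison using the splitting $\mathbb{L}^{S\vee}_{\mathcal{E}_j}\otimes\mathbb{Q}=(\mathbb{L}^{S\vee}_{\mathcal{X}}\otimes\mathbb{Q})\oplus\mathbb{Q}p_0^S$: writing a degree $\beta$ via $n:=\langle p_0^S,\beta\rangle$ and its projection $d$ to $\mathbb{L}^S_{\mathcal{X}}\otimes\mathbb{Q}$, the pairings recorded in the text give $\langle\hat D_i^S,\beta\rangle=\langle D_i^S,d\rangle$ for $i\neq j$, $\langle\hat D_j^S,\beta\rangle=\langle D_j^S,d\rangle-n$, and $\langle\hat D_{m+l+1}^S,\beta\rangle=\langle\hat D_{m+l+2}^S,\beta\rangle=n$, while $y^\beta=y_0^n\,y^d$.

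The first step is to show $g_i^{(j)}$ has no $y_0$-dependence. Every monomial $y_0^n y^d$ occurring in $g_i^{(j)}$ is homogeneous of degree zero, i.e. $\langle\rho^S_{\mathcal{E}_j},\beta\rangle=\langle\rho^S_{\mathcal{X}},d\rangle+n=0$. Now both $p_0^S$ and the image of $\rho^S_{\mathcal{X}}$ lie in $cl(C^S_{\mathcal{E}_j})$: the former as one of the nef basis vectors, the latter because $C^S_{\mathcal{E}_j}=C^S_{\mathcal{X}}+\mathbb{R}_{>0}p_0^S$ combined with the weak Fano hypothesis $\rho^S_{\mathcal{X}}\in cl(C^S_{\mathcal{X}})$. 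A class in $cl(C^S_{\mathcal{E}_j})$ pairs non-negatively with every element of $\mathbb{K}_{\text{eff},\mathcal{E}_j}$ (write it as $\sum_{i\in I^S}\lambda_i\hat D_i^S$ with $\lambda_i\geq 0$ for the anticone $I^S\in\mathcal{A}^S$ witnessing effectivity, on which all the pairings are in $\mathbb{Z}_{\geq 0}$). Hence the contributing $\beta$ satisfy $n\geq 0$ and $\langle\rho^S_{\mathcal{X}},d\rangle\geq 0$; with $\langle\rho^S_{\mathcal{X}},d\rangle+n=0$ this forces $n=0$. So only fiber classes contribute and $g_i^{(j)}=g_i^{(j)}(y_1,\dots,y_{r+l})$.

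The second step is to identify the $n=0$ contribution with $g_i$. At $n=0$ the two extra factors of $I_{\mathcal{E}_j}$ (indices $m+l+1,m+l+2$, whose pairing with $\beta$ is $0$) reduce to $1$, and the conditions $\beta\in\mathbb{K}_{\mathcal{E}_j}$ and $v(\beta)=0$ become precisely $d\in\mathbb{K}_{\mathcal{X}}$ and $v(d)=0$; thus the relevant degrees are in bijection with those producing $g_i$. The only discrepancy with the $I_{\mathcal{X}}$-summand for the same $d$ is that the $j$-th factor is the same hypergeometric expression but with $D_j$ replaced by $\hat D_j=D_j-p_0$. Here I would use that $p_0^2=0$ in $H^*(\mathcal{E}_j;\mathbb{Q})$ (it is the pullback of the point class on the base $\mathbb{P}^1$; equivalently $\hat D_{m+1}\hat D_{m+2}=0$ since $\hat b_{m+1}$ and $\hat b_{m+2}$ do not span a cone of $\Sigma_j$). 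Expanding the $j$-th factor to first order in $p_0$, the product becomes $(\text{the }I_{\mathcal{X}}\text{-summand})-p_0\cdot(\text{correction})$; the correction term is a multiple of $p_0$. Taking the untwisted degree-two part and extracting the coefficient of $p_i$ for $1\le i\le r$, the $p_0$-correction drops out (it feeds only into the coefficient of $p_0$), so this coefficient equals the corresponding coefficient for $I_{\mathcal{X}}$, namely $g_i$. Combined with the first step this yields $g_i^{(j)}=g_i$.

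I expect the main obstacle to be the bookkeeping in the second step: checking that at $n=0$ the sets $\mathbb{K}_{\mathcal{E}_j}$ and the reduction data $v(\beta)$ restrict exactly to those of $\mathcal{X}$ on the untwisted sector, and making the first-order expansion in $p_0$ precise so that the twist $\hat D_j=D_j-p_0$ is confined to the $p_0$-direction. The first step is comparatively clean once the nef-ness of $\rho^S_{\mathcal{X}}$ on $\mathcal{E}_j$ (via $C^S_{\mathcal{E}_j}=C^S_{\mathcal{X}}+\mathbb{R}_{>0}p_0^S$) is in hand.
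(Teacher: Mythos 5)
Your proposal is correct and takes essentially the same approach as the paper: the paper identifies $g_{i}$ and $g_{i}^{(j)}$ as the coefficients of $z^{-1}p_{i}$ in the expansions of $I_{\mathcal{X}}$ and $I_{\mathcal{E}_{j}}$ and then invokes the argument of Lemma 3.5 of \cite{GI}, which is exactly the two-step argument you spell out. Namely, homogeneity plus the weak Fano condition (via $C^{S}_{\mathcal{E}_{j}}=C^{S}_{\mathcal{X}}+\mathbb{R}_{>0}p_{0}^{S}$) forces $\langle p_{0}^{S},\beta\rangle=0$ for all contributing classes, and the $n=0$ part of $I_{\mathcal{E}_{j}}$ matches $I_{\mathcal{X}}$ up to a correction lying in the $p_{0}$-direction (using $p_{0}^{2}=0$), which does not affect the coefficients of $p_{1},\ldots,p_{r}$.
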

\begin{proof}
The functions $g_{i}$ is the coefficients of $z^{-1}p_{i}$ in the expansion of $I_{\mathcal{X}}$: 
\begin{equation*}
I_{\mathcal{X}}(y,z)=e^{\sum\limits_{i=1}^{r}p_{i}logy_{i}/z}\left(1+z^{-1}\left(\sum\limits_{i=1}^{r}g_{i}(y)p_{i}+\tau_{tw}\right)+O(z^{-2})\right).
\end{equation*}
The functions $g_{i}^{(j)}$ is the coefficients of $z^{-1}p_{i}$ in the expansion of $I_{\mathcal{E}_{j}}$: 
\begin{equation*}
I_{\mathcal{E}_{j}}(y,z)=e^{\sum\limits_{i=0}^{r}p_{i}logy_{i}/z}\left(1+z^{-1}\left(\sum\limits_{i=0}^{r}g_{i}^{(j)}(y)p_{i}+\tau^{(j)}_{tw}\right)+O(z^{-2})\right).
\end{equation*}
Following the proof of lemma 3.5 of \cite{GI}, we obtain the conclusion of this proposition. 
\end{proof}

We will prove $\tau^{(j)}_{tw}$ 
is also independent from $y_{0}$. 
To begin with, the following lemma implies that 
$\tau_{tw}^{(j)}(y)$ is an (integer) power series in $y_{0}$.

\begin{lemma}
For any $\beta\in \mathbb{K}_{\mathcal{E}_{j}}$, 
we have $\langle p^{S}_{0},\beta\rangle\in \mathbb{Z}$. 
Furthermore, for any  $\beta\in \mathbb{K}_{\text{eff}, \mathcal{E}_{j}}$, 
we have $\langle p^{S}_{0},\beta\rangle\in \mathbb{Z}_{\geq0}$.
\end{lemma}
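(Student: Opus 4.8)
The plan is to deduce the statement directly from the $\mathbb{P}^{1}$-fibration structure of $\mathcal{E}_{j}$, without computing any Gromov--Witten invariants. The starting point is the observation, recorded above, that the two fiber divisor classes coincide with $p_{0}^{S}$, namely $p_{0}^{S}=\hat{D}^{S}_{m+l+1}=\hat{D}^{S}_{m+l+2}$. Hence for \emph{every} class $\beta$ one has $\langle p_{0}^{S},\beta\rangle=\langle\hat{D}^{S}_{m+l+1},\beta\rangle=\langle\hat{D}^{S}_{m+l+2},\beta\rangle$, so it is enough to show that at least one of the two indices $m+l+1,\,m+l+2$ belongs to the index set $\{\,i:\langle\hat{D}^{S}_{i},\beta\rangle\in\mathbb{Z}\,\}$ defining membership in $\mathbb{K}_{\mathcal{E}_{j}}$ (respectively to the set $\{\,i:\langle\hat{D}^{S}_{i},\beta\rangle\in\mathbb{Z}_{\geq0}\,\}$ defining membership in $\mathbb{K}_{\text{eff},\mathcal{E}_{j}}$).

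For $\beta\in\mathbb{K}_{\mathcal{E}_{j}}$ this index set is by definition an element $\hat{I}^{S}$ of $\mathcal{A}^{S}_{\mathcal{E}_{j}}$, meaning that $\sum_{i\notin\hat{I}^{S}}\mathbb{R}_{\geq0}\,\overline{\beta^{S}(e_{i})}$ is a cone of the fan $\Sigma_{j}$ of $\mathcal{E}_{j}$. The key geometric input I would isolate is that the two fiber rays $\hat{b}_{m+1}=(0,1)$ and $\hat{b}_{m+2}=(b_{j},-1)$ -- which in the $S$-extended labeling carry the indices $m+l+1$ and $m+l+2$ -- are never contained in a single cone of $\Sigma_{j}$. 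This is precisely the statement that $\Sigma_{j}$ is the fan of a $\mathbb{P}^{1}$-orbibundle over the base fan $\Sigma$: the two fiber rays sit over the two opposite poles of the fiber $\mathbb{P}^{1}$, and a cone of $\Sigma_{j}$ can meet the fiber direction in at most one of them. Equivalently, in the anti-cone language, every $\hat{I}^{S}\in\mathcal{A}^{S}_{\mathcal{E}_{j}}$ must contain at least one of the indices $m+l+1$ and $m+l+2$, since the complementary rays (those indexed by $i\notin\hat{I}^{S}$) span a cone and hence include at most one vertical ray.

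Granting this, the conclusion is immediate: applying it to $\hat{I}^{S}=\{\,i:\langle\hat{D}^{S}_{i},\beta\rangle\in\mathbb{Z}\,\}$ shows that $\langle\hat{D}^{S}_{m+l+1},\beta\rangle\in\mathbb{Z}$ or $\langle\hat{D}^{S}_{m+l+2},\beta\rangle\in\mathbb{Z}$, and either way $\langle p_{0}^{S},\beta\rangle\in\mathbb{Z}$. The effective statement follows by the identical argument with $\mathbb{Z}$ replaced by $\mathbb{Z}_{\geq0}$ and $\mathbb{K}_{\mathcal{E}_{j}}$ replaced by $\mathbb{K}_{\text{eff},\mathcal{E}_{j}}$. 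I expect the only real work to be the verification of the geometric fact that no cone of $\Sigma_{j}$ contains both vertical rays; this should follow cleanly from the explicit $1$-skeleton $\hat{b}_{i}=(b_{i},0)$, $\hat{b}_{m+1}=(0,1)$, $\hat{b}_{m+2}=(b_{j},-1)$ together with the description of the maximal cones of $\Sigma_{j}$ as the cones $\sigma+\mathbb{R}_{\geq0}\hat{b}_{m+1}$ and $\sigma+\mathbb{R}_{\geq0}\hat{b}_{m+2}$ for $\sigma$ a maximal cone of $\Sigma$, each of which uses exactly one of the two vertical rays.
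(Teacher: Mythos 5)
Your proof is correct and follows essentially the same route as the paper: both arguments reduce the statement to the fact that no cone of $\Sigma_{j}$ contains both vertical rays $\hat{b}_{m+1}$ and $\hat{b}_{m+2}$, and then combine the anticone definition of $\mathbb{K}_{\mathcal{E}_{j}}$ (resp. $\mathbb{K}_{\text{eff},\mathcal{E}_{j}}$) with the identity $\langle p_{0}^{S},\beta\rangle=\langle\hat{D}^{S}_{m+l+1},\beta\rangle=\langle\hat{D}^{S}_{m+l+2},\beta\rangle$. The only difference is how that geometric fact is justified: the paper observes that a common cone would also contain $\hat{b}_{m+1}+\hat{b}_{m+2}=\hat{b}_{j}$, contradicting simpliciality because the three rays are coplanar, whereas you invoke the $\mathbb{P}^{1}$-bundle structure of the fan (each maximal cone of $\Sigma_{j}$ uses exactly one vertical ray), which is an equally valid justification built into the construction of $\mathcal{E}_{j}$.
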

\begin{proof}
Any cone $\sigma\in \Sigma_{j}$ 
containing both $\hat{b}_{m+1}$ and 
$\hat{b}_{m+2}$ should also contain $\hat{b}_{j}$, 
this is impossible since the fan $\Sigma_{j}$ is simplicial and 
$\hat{b}_{m+1}$, $\hat{b}_{m+2}$ and 
$\hat{b}_{j}$ lie in the same plane. 
Hence, by the definition of $\mathbb{K}_{\mathcal{E}_{j}}$ 
(resp. $\mathbb{K}_{\text{eff}, \mathcal{E}_{j}}$), 
at least one of $\langle \hat{D}^{S}_{m+1},\beta\rangle$ and 
$\langle \hat{D}^{S}_{m+2},\beta\rangle$ has to be integer (resp. non-negative integer), 
 for any $\beta\in\mathbb{K}_{\mathcal{E}_{j}}$ (resp. $\beta\in \mathbb{K}_{\text{eff}, \mathcal{E}_{j}}$). 
On the other hand, we have, 
\[
\langle p^{S}_{0},\beta\rangle=\langle \hat{D}^{S}_{m+1},\beta\rangle=\langle \hat{D}^{S}_{m+2},\beta\rangle.
\]  
Therefore, we must have $\langle p^{S}_{0},\beta\rangle\in \mathbb{Z}$ (resp. $\langle p^{S}_{0},\beta\rangle\in \mathbb{Z}_{\geq0}$).
\end{proof}
As a direct consequence of the above lemma, $\tau_{tw}^{(j)}(y)$ can only contain non-negative integer power of $y_{0}$.

\begin{proposition}\label{tau-indep-y_0}
Let $\tau^{(j)}_{tw}(y)=\sum\limits_{n=0}^{\infty}H_{n}^{(j)}(y)y_{0}^{n}$, 
where $H_{n}^{(j)}(y)$ is a (fractional) power series in $y_{1},\ldots,y_{n}$.
Then 
\[
H_{n}^{(j)}(y)=0\quad \text{for}\quad n\geq 1,
\] 
i.e. $\tau^{(j)}_{tw}(y)$ is independent from $y_{0}$. Moreover, we have 
\[
\tau^{(j)}_{tw}(y)=\tau_{tw}(y).
\]
\end{proposition}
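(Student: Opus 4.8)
The plan is to read $\tau^{(j)}_{tw}$ directly off the $I$-function $I_{\cE_j}$, exactly as $\tau_{tw}$ is read off $I_{\cX}$ in the expansion used in Proposition \ref{g^(j)-indep-y_0}, and then to match the two summand by summand. The first point to record is that $\tau_{tw}$, and likewise $\tau^{(j)}_{tw}$, lies in $H^{\leq 2}_{orb}\setminus H^2$: a twisted class $\textbf{1}_{v}$ has degree $2\,\mathrm{age}(v)>0$, so any extra divisor factor would push its degree above $2$. Hence the twisted part of the order-$z^{-1}$ coefficient is a sum of \emph{pure} twisted fundamental classes $\textbf{1}_{v}$ with $0<\mathrm{age}(v)\le 1$, and it suffices to determine which $\beta\in\mathbb{K}_{\text{eff},\cE_j}$ produce such a class at order $z^{-1}$.

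Next I would parametrize $\beta=(d,n)$ under the splitting $\mathbb{L}^{S}_{\cE_j}\otimes\mathbb{Q}\cong(\mathbb{L}^{S}\otimes\mathbb{Q})\oplus\mathbb{Q}$, so that $n=\langle p^{S}_{0},\beta\rangle\in\mathbb{Z}_{\geq 0}$ by the preceding lemma, while $\langle \hat D^{S}_{i},\beta\rangle=\langle D^{S}_{i},d\rangle$ for $i\neq j$, $i\le m+l$, $\langle \hat D^{S}_{j},\beta\rangle=\langle D^{S}_{j},d\rangle-n$, and $\langle \hat D^{S}_{m+l+1},\beta\rangle=\langle \hat D^{S}_{m+l+2},\beta\rangle=n$. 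Since $n\in\mathbb{Z}$, the fractional parts are unchanged, and the reduction function gives $v(\beta)=(v(d),0)$; in particular the two $\mathbb{P}^1$-fiber rays contribute nothing to the box element. Inspecting the summand then shows that its leading power of $z$ is $z^{-(\langle\rho^{S}_{\cX},d\rangle+n+\mathrm{age}(v(d)))}$, where I use $\rho^{S}_{\cE_j}=\rho^{S}_{\cX}+p^{S}_{0}$ together with $\sum_{i}\{-\langle \hat D^{S}_{i},\beta\rangle\}=\mathrm{age}(v(d))$, and that the coefficient of this leading power is the pure class $\textbf{1}_{(v(d),0)}$ up to a scalar.

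For a pure twisted class to appear at order $z^{-1}$ we therefore need
\[
\langle\rho^{S}_{\cX},d\rangle+n+\mathrm{age}(v(d))=1,\qquad v(d)\neq 0 .
\]
Here $\langle\rho^{S}_{\cX},d\rangle\ge 0$ (weak Fano, via Lemma \ref{weak-fano-lemma}), $n\ge 0$, and $\mathrm{age}(v(d))>0$ because $v(d)\neq 0$. The only way three nonnegative numbers can sum to $1$ with the last strictly positive is $n=0$. Thus every contributing $\beta$ has vanishing $y_{0}$-exponent, so $\tau^{(j)}_{tw}$ contains no positive power of $y_{0}$, i.e. $H^{(j)}_{n}=0$ for $n\ge 1$.

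Finally, setting $n=0$ collapses the comparison to $I_{\cX}$: the two fiber-ray factors become $\prod_{k\geq 0}/\prod_{k\geq 0}=1$, and the factor at $i=j$ loses its $-n$ shift, so the $\beta=(d,0)$ summand of $I_{\cE_j}$ reproduces, to leading order in $z$, the $d$-summand of $I_{\cX}$. It remains to check that the index sets agree, namely $(d,0)\in\mathbb{K}_{\text{eff},\cE_j}\iff d\in\mathbb{K}_{\text{eff},\cX}$; this is where the combinatorics of the fan $\Sigma_{j}$ enters and is the main obstacle. The key is that for $\beta=(d,0)$ both fiber rays lie in the support set $\{i:\langle \hat D^{S}_{i},\beta\rangle\in\mathbb{Z}_{\geq 0}\}$, and an anticone of $\cE_j$ containing both $\hat b_{m+l+1}$ and $\hat b_{m+l+2}$ is exactly the complement of a cone of $\Sigma$ lifted into $\textbf{N}\times\{0\}$ (no cone of $\Sigma_{j}$ contains both fiber rays, since $\hat b_{m+l+1}+\hat b_{m+l+2}=\hat b_{j}$ subdivides the plane they span); hence the anticone condition for $\cE_j$ reduces to the one for $\cX$. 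Combining these identifications and matching $\textbf{1}_{(v(d),0)}$ with $\textbf{1}_{v(d)}$ under the identification of the age-$\le 1$ twisted sectors of $\cE_j$ with those of $\cX$ yields $\tau^{(j)}_{tw}=\tau_{tw}$.
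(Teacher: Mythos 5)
Your proposal is correct and follows essentially the same route as the paper's proof: expand the product factor of $I_{\cE_j}$, combine weak-Fano positivity with $\langle p_0^S,\beta\rangle\in\mathbb{Z}_{\geq 0}$ and the ceiling/age accounting to show that any purely twisted contribution to the $z^{-1}$-coefficient must have vanishing $y_0$-exponent, and then identify $\tau^{(j)}_{tw}$ with $\tau_{tw}$ by restricting to $y_0=0$. Your packaging of the paper's case analysis into the single identity $\sum_i\lceil\langle\hat{D}^S_i,\beta\rangle\rceil=\langle\rho^S_{\cE_j},\beta\rangle+\mathrm{age}(v(\beta))$, and your explicit anticone matching at the end (which the paper compresses into the identity $\imath^{*}I_{\cE_j}\big|_{y_{0}=0}=I_{\cX}$), are only cosmetic differences.
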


\begin{proof}

Recall $\tau^{(j)}_{tw}(y)$ is the coefficient of $z^{-1}$ in 
\begin{equation}
e^{-\sum\limits_{i=0}^{r}p_{i}logy_{i}/z}I_{\mathcal{E}_{j}}(y,z)=
\sum\limits_{\beta\in \mathbb{K}_{\text{eff},\mathcal{E}_{j}}}
\prod\limits_{i=1}^{m+l+2}
\left(\frac{\prod_{k=\lceil \langle \hat{D}^{S}_{i},\beta\rangle \rceil}^{\infty}
\left(\hat{D}_{i}+\left(\langle \hat{D}^{S}_{i},\beta\rangle-k\right)z\right)}
{\prod_{k=0}^{\infty}\left(\hat{D}_{i}+\left(\langle \hat{D}^{S}_{i},\beta\rangle-k\right)z\right)}
\right)y^{\beta}\textbf{1}_{v(\beta)},
\end{equation}
valued in $H^{\leq 2}_{orb}(\mathcal{E}_{j})\setminus H^{2}(\mathcal{E}_{j})$. 
Hence, we only need to consider terms with $v(\beta)\neq 0$, or, equivalently, $v(d)\neq 0$, where $d$ is the natural projection of $\beta$ on to $\mathbb{K}_{\text{eff},\mathcal{X}}$.

Therefore, it remains to examine the product factor:
\begin{align}\label{exp-of-prod-factor}
\notag & 
\prod\limits_{i=1}^{m+l+2}
\left(\frac{\prod_{k=\lceil \langle \hat{D}^{S}_{i},\beta\rangle \rceil}^{\infty}
\left(\hat{D}_{i}+\left(\langle \hat{D}^{S}_{i},\beta\rangle-k\right)z\right)}
{\prod_{k=0}^{\infty}\left(\hat{D}_{i}+\left(\langle \hat{D}^{S}_{i},\beta\rangle-k\right)z\right)}
\right)\\
\notag =& 
\frac{\prod_{i:\langle \hat{D}_{i}^{S},\beta\rangle<0}
\prod_{\langle \hat{D}_{i}^{S},\beta\rangle\leq k <0}
\left(\hat{D}_{i}+\left(\langle \hat{D}_{i}^{S},\beta\rangle-k\right)z\right)}
{\prod_{i:\langle \hat{D}_{i}^{S},\beta\rangle>0}
\prod_{0\leq k<\langle \hat{D}_{i}^{S},\beta\rangle}
\left(\hat{D}_{i}+\left(\langle \hat{D}_{i}^{S},\beta\rangle-k\right)z\right)}
\notag\\
=&
C_{\beta}z^{-\left(\sum_{i=1}^{m+l+2}\lceil \langle \hat{D}^{S}_{i},\beta\rangle \rceil +\# \{i: \langle \hat{D}_{i}^{S},\beta\rangle \in \mathbb{Z}_{<0}\}\right)}\prod_{i:\langle \hat{D}^{S}_{i},\beta\rangle \in \mathbb{Z}_{<0}}\hat{D}_{i}+h.o.t.,
\end{align}
where 
\begin{equation}
C_{\beta}=\prod\limits_{i:\langle \hat{D}^{S}_{i},\beta\rangle <0}\prod\limits_{\langle \hat{D}^{S}_{i},\beta\rangle <k<0}\left(\langle \hat{D}^{S}_{i},\beta\rangle -k\right)\prod\limits_{i:\langle \hat{D}^{S}_{i},\beta\rangle >0}\prod\limits_{0\leq k<\langle \hat{D}^{S}_{i},\beta\rangle}\left(\langle \hat{D}^{S}_{i},\beta\rangle -k\right)^{-1}.
\end{equation}

By assumption, we need to have 
\[
\sum_{i=1}^{m+l+2}\lceil \langle \hat{D}^{S}_{i},\beta\rangle \rceil \geq \sum_{i=1}^{m+l+2}\langle \hat{D}^{S}_{i},\beta\rangle\geq 0.
\] 
The equality holds if and only if 
\[
\langle \hat{D}^{S}_{i},\beta\rangle\in\mathbb{Z}, \quad \text{for all} \quad 1\leq i \leq m+l+2; \quad \text{and} \quad \sum_{i=1}^{m+l+2}\langle \hat{D}^{S}_{i},\beta\rangle=0.
\]
However, this would imply $v(\beta)=0$, hence we cannot have $ \sum_{i=1}^{m+l+2}\lceil \langle \hat{D}^{S}_{i},\beta\rangle \rceil=0$. Therefore, the expansion (\ref{exp-of-prod-factor}) would contribute to $H_{n}^{(j)}$ only when 
\begin{equation*}
\sum_{i=1}^{m+l+2}\lceil \langle \hat{D}^{S}_{i},\beta\rangle \rceil=1\quad \text{and}\quad\# \{i: \langle \hat{D}_{i}^{S},\beta\rangle \in \mathbb{Z}_{<0}\}=0.
\end{equation*} 
In this case,  if $\langle p^{S}_{0},\beta\rangle\geq 1$, then 
\[
\sum_{i=1}^{m+l+2}\lceil \langle \hat{D}^{S}_{i},\beta\rangle \rceil\geq\sum_{i=1}^{m+l}\lceil\langle D^{S}_{i},d\rangle\rceil+1,
\] 
therefore, we have 
\[
0\geq\sum_{i=1}^{m+l}\lceil\langle D^{S}_{i},d\rangle\rceil\geq\sum_{i=1}^{m+l}\langle D^{S}_{i},d\rangle=0.
\] 
This implies, when $\langle p^{S}_{0},\beta\rangle\geq1$, we must have
\[
\langle D^{S}_{i},d\rangle \in \mathbb{Z}, \text{ for }1\leq i \leq m+l.
\] 
It is a contradiction, since $\hat{\tau}_{tw}\in  H^{\leq 2}_{orb}(\mathcal{E}_{j})\setminus H^{2}(\mathcal{E}_{j})$ 
implies $v(d)\neq 0$. 
Hence 
\[
H_{n}^{(j)}=0\text{ for all }n>0
\] 
and $\tau^{(j)}_{tw}(y)$ is independent from $y_{0}$. 
Moreover, by the expression of $I$-functions and the identity
\[
\imath^{*}I_{\cE_{j}}\big |_{y_{0}=0}=I_{\cX},
\]
 we have $\tau^{(j)}_{tw}(y)=\tau_{tw}(y)$.

\end{proof}

As a direct consequence of the above lemma, we can use the following notation for the Seidel element 
\begin{equation}
\tilde{S}_{j}(\tau(y)):=\tilde{S}_{j}(\tau^{(j)}(y)), 
\end{equation}
since $\tilde{S}_{j}(\tau^{(j)}(y))$ does not depend on $y_{0}$ or $q_{0}$.

\subsection{Seidel Elements in terms of $I$-functions}

We can rewrite the $I$-function of the associated bundle $\mathcal{E}_{j}$ as follows:
\begin{equation}
 e^{\sum\limits_{i=0}^{r}p_{i}logy_{i}/z}\left(1+z^{-1}\left(\sum\limits_{i=0}^{r}g_{i}^{(j)}(y)p_{i}+\tau^{(j)}_{tw}(y)\right)+z^{-2}\left(\sum\limits_{n=0}^{2}G_{n}^{(j)}(y)y_{0}^{n}\right)+O(z^{-3})\right).
\end{equation}
Then, $logq_{i}=logy_{i}+g_{i}^{(j)}(y)$ implies 
\begin{equation}
I_{\mathcal{E}_{j}}(y,z)=e^{\sum\limits_{i=0}^{r}p_{i}logq_{i}/z}\left(1+z^{-1}\tau^{(j)}_{tw}(y)+z^{-2}\left(\sum\limits_{n=0}^{2}G_{n}^{(j)}(y)y_{0}^{n}\right)+O(z^{-3})\right),
\end{equation}
where $G_{n}^{(j)}(y)$ is a (fractional) power series in $y_{1},\ldots,y_{r+l}$ taking values in $H^{*}_{orb}(\mathcal{E}_{j})$.

By proposition (\ref{J-function-Seidel}), the Seidel element $\tilde{S}_{j}(\tau^{(j)}(y))$ 
is the coefficient of $q_{0}/z^{2}$ in 
\[
exp\left(-\sum\limits_{i=0}^{r}p_{i}logq_{i}/z\right)J_{\mathcal{E}_{j}}(\tau^{(j)}(y),z),
\] 
hence $J_{\mathcal{E}_{j}}(\tau^{(j)}(y),z)=I_{\mathcal{E}_{j}}(y,z)$ and $logq_{0}=logy_{0}+g_{0}^{(j)}(y)$ imply the following result:

\begin{thm}\label{Seidel-I-function}
The Seidel element $S_{j}$ associated to the toric divisor $D_{j}$ is given by 
\begin{equation}
S_{j}(\tau^{(j)}(y))=\imath^{*}(G_{1}^{(j)}(y)y_{0}).
\end{equation}
Furthermore, we have
\begin{equation}
\tilde{S}_{j}(\tau(y))=\tilde{S}_{j}(\tau^{(j)}(y))=exp(-g_{0}^{j}(y))\imath^{*}(G_{1}^{(j)}(y)).
\end{equation}
\end{thm}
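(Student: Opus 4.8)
The plan is to read the Seidel element off the mirror theorem by comparing the $z^{-2}$-coefficients of the $J$- and $I$-functions of $\cE_j$. First I would recall from Proposition \ref{J-function-Seidel} that $S_j(\hat{\tau})=\imath^{*}(F_1 q_0)$, equivalently $\tilde{S}_j(\hat{\tau})=\imath^{*}(F_1)$, where $F_1=F_1(q_1,\ldots,q_r;\hat{\tau})$ is the coefficient of $q_0 z^{-2}$ in $\exp\!\left(-\sum_{a=0}^{r}p_a\log q_a/z\right)J_{\cE_j}$. Since $\rho^{S}_{\cX}\in cl(C^{S}_{\cX})$, Lemma \ref{weak-fano-lemma} permits applying the mirror theorem (Theorem \ref{mirror-theorem}) to $\cE_j$, giving the identity $J_{\cE_j}(\tau^{(j)}(y),z)=I_{\cE_j}(y,z)$ that drives the whole argument.

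Next I would strip the common exponential factor $e^{\sum_{i=0}^{r}p_i\log q_i/z}$ from both sides, where the flat coordinates are defined by $\log q_i=\log y_i+g^{(j)}_i(y)$. The $z^{-1}$-coefficients agree because $\hat{\tau}_{tw}=\tau^{(j)}_{tw}(y)$, and matching the $z^{-2}$-coefficients yields the identity of $H^{*}_{orb}(\cE_j)$-valued series $\sum_{n}F_n(q_1,\ldots,q_r;\tau^{(j)}(y))\,q_0^{n}=\sum_{n=0}^{2}G^{(j)}_n(y)\,y_0^{n}$.

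To isolate $F_1$, I would substitute $q_0=y_0\exp(g^{(j)}_0(y))$, coming from $\log q_0=\log y_0+g^{(j)}_0(y)$, so that $q_0^{n}=y_0^{n}e^{n g^{(j)}_0(y)}$. By Propositions \ref{g^(j)-indep-y_0} and \ref{tau-indep-y_0}, the series $g^{(j)}_0$, the substituted arguments $q_1,\ldots,q_r$ (hence each $F_n$), and each $G^{(j)}_n$ are all independent of $y_0$, so I may equate the coefficients of each power $y_0^{n}$ to get $F_n\,e^{n g^{(j)}_0(y)}=G^{(j)}_n(y)$. The case $n=1$ gives $F_1=e^{-g^{(j)}_0(y)}G^{(j)}_1(y)$. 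Since $g^{(j)}_0$ is a scalar series, applying $\imath^{*}$ yields $\tilde{S}_j(\tau^{(j)}(y))=\imath^{*}(F_1)=e^{-g^{(j)}_0(y)}\imath^{*}(G^{(j)}_1(y))$; multiplying by $q_0$ and using $q_0 e^{-g^{(j)}_0(y)}=y_0$ then gives $S_j(\tau^{(j)}(y))=\imath^{*}(G^{(j)}_1(y)\,y_0)$. Finally I invoke the equality $\tilde{S}_j(\tau(y))=\tilde{S}_j(\tau^{(j)}(y))$ recorded after Proposition \ref{tau-indep-y_0}.

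The main obstacle is the bookkeeping between the flat Novikov coordinates $q$ appearing in Proposition \ref{J-function-Seidel} and the mirror coordinates $y$ in which $I_{\cE_j}$ and the $G^{(j)}_n$ are expressed: the coefficient matching in $y_0$ is legitimate only once one knows that every ingredient other than the explicit powers of $y_0$ and $q_0$ is genuinely independent of $y_0$, which is exactly the content imported from Propositions \ref{g^(j)-indep-y_0} and \ref{tau-indep-y_0}. With that independence in hand, the remaining steps are a formal power-series comparison and present no further difficulty.
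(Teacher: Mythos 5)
Your proposal is correct and follows essentially the same route as the paper: Proposition \ref{J-function-Seidel} to identify $\tilde{S}_j$ as the coefficient of $q_0/z^2$, Lemma \ref{weak-fano-lemma} to apply the mirror theorem $J_{\cE_j}(\tau^{(j)}(y),z)=I_{\cE_j}(y,z)$, the coordinate change $\log q_0=\log y_0+g_0^{(j)}(y)$, and Propositions \ref{g^(j)-indep-y_0} and \ref{tau-indep-y_0} to justify matching powers of $y_0$. Your write-up merely makes explicit the power-series bookkeeping (the identity $F_n e^{n g_0^{(j)}}=G_n^{(j)}$) that the paper leaves implicit.
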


\subsection{Computation of $g_{0}^{(j)}$}\label{computation-of-g}

The computation is essentially the same as the proof of lemma 3.16 of \cite{GI}. Consider the product factors in $I_{\mathcal{E}_{j}}$:
\begin{equation*}
\prod\limits_{i=1}^{m+l+2}\left(\frac{\prod_{k=\lceil \langle \hat{D}^{S}_{i},\beta\rangle \rceil}^{\infty}\left(\hat{D}_{i}+\left(\langle \hat{D}^{S}_{i},\beta\rangle-k\right)z\right)}{\prod_{k=0}^{\infty}\left(\hat{D}_{i}+\left(\langle \hat{D}^{S}_{i},\beta\rangle-k\right)z\right)}\right)y^{\beta}\textbf{1}_{v(\beta)},
\end{equation*}
these factors contribute to $g_{i}^{(j)}$ if 
\[
v(\beta)=\sum\limits_{i=1}^{m+l+2}\{-\langle \hat{D}_{i}^{S},\beta\rangle\}\hat{b}_{i}=0,
\]
 then, by the definition of $\mathbb{K}_{\text{eff}}$, we must have 
\[
\langle \hat{D}_{i}^{S},\beta\rangle\in \mathbb{Z},\text{ for all }1\leq i \leq m+l+2.
\]
 In this case, the product factors can be rewritten as
\begin{align}\label{expansion-C^beta}
\notag & \prod\limits_{i=1}^{m+l+2}\left(\frac{\prod_{k=\lceil \langle \hat{D}^{S}_{i},\beta\rangle \rceil}^{\infty}\left(\hat{D}_{i}+\left(\langle \hat{D}^{S}_{i},\beta\rangle-k\right)z\right)}{\prod_{k=0}^{\infty}\left(\hat{D}_{i}+\left(\langle \hat{D}^{S}_{i},\beta\rangle-k\right)z\right)}\right)y^{\beta}\textbf{1}_{v(\beta)}\\
\notag & =\prod\limits_{i=1}^{m+l+2}\frac{\prod_{k=-\infty}^{0}\left(\hat{D}_{i}+kz\right)}{\prod_{k=-\infty}^{\langle \hat{D}_{i}^{S},\beta\rangle}\left(\hat{D}_{i}+kz\right)}y^{\beta} \\
& = \left(C_{\beta}z^{-\sum_{i=1}^{m+l+2}\langle \hat{D}^{S}_{i},\beta\rangle -\#\{i:\langle \hat{D}_{i}^{S},\beta\rangle<0\}}\prod\limits_{i:\langle \hat{D}_{i}^{S},\beta\rangle <0}\hat{D}_{i}+h.o.t.\right)y^{\beta}, 
\end{align}
where $h.o.t.$ stands for higher order terms in $z^{-1}$ and 
\begin{equation}
C_{\beta}=\prod\limits_{i:\langle \hat{D}_{i}^{S},\beta\rangle<0}(-1)^{-\langle \hat{D}_{i}^{S},\beta\rangle-1}\left(-\langle \hat{D}_{i}^{S},\beta\rangle -1\right)!\prod\limits_{i:\langle \hat{D}_{i}^{S},\beta\rangle \geq 0}\left(\langle \hat{D}_{i}^{S},\beta\rangle !\right)^{-1}.
\end{equation}
They contribute to the $z^{-1}$ term if 
\[
\sum_{i=1}^{m+l+2}\langle \hat{D}^{S}_{i},\beta\rangle +\#\{i:\langle \hat{D}_{i}^{S},\beta\rangle<0\}\leq1.
\]
Since we assume $\rho^{S}_{\mathcal{X}}\in cl(C^{S}_{\mathcal{X}})$, 
hence $\rho^{S}_{\mathcal{E}_{j}}\in cl(C^{S}_{\mathcal{E}_{j}})$.
So it has to be the following three cases:
\begin{itemize}
\item $\left\{
        \begin{array}{lr}
         \sum_{i=1}^{m+l+2} \langle \hat{D}^{S}_{i},\beta\rangle=0\\
          \# \{i: \langle \hat{D}_{i}^{S},\beta\rangle \in \mathbb{Z}_{<0}\}=0
        \end{array}
       \right.$

\item 
 $\left\{
     \begin{array}{lr}
     \sum_{i=1}^{m+l+2}\langle \hat{D}^{S}_{i},\beta\rangle = 1 \\
      \# \{i: \langle \hat{D}_{i}^{S},\beta\rangle \in \mathbb{Z}_{<0}\}=0
     \end{array}
   \right. $
\item
$\left\{
     \begin{array}{lr}
     \sum_{i=1}^{m+l+2}\langle \hat{D}^{S}_{i},\beta\rangle \rceil= 0 \\
      \# \{i: \langle \hat{D}_{i}^{S},\beta\rangle \in \mathbb{Z}_{<0}\}=1
     \end{array}
   \right. $.
\end{itemize}
In the first case, we have $\langle \hat{D}^{S}_{i},\beta\rangle=0$ for all $i$, hence $\beta=0$; the second case can not happen, since $\beta$ has to satisfy $\langle \hat{D}^{S}_{i},\beta\rangle=0$ except for one $i$ and this implies $\beta=0$. Therefore, the coefficient of $z^{-1}$ is from the third case, where 
\begin{equation}\label{third-case}
\sum_{i=1}^{m+l+2}\langle \hat{D}^{S}_{i},\beta\rangle=0\quad \text{and}\quad \#\{i:\langle \hat{D}_{i}^{S},\beta\rangle<0\}=1.
\end{equation}
By the assumption $\rho^{S}_{\mathcal{X}}\in cl(C^{S}_{\mathcal{X}})$, 
we must have $\sum\limits_{i=1}^{m+l}\langle D^{S}_{i},d\rangle= 0$ and $\langle p^{S}_{0},\beta\rangle=0$. 
Moreover, $\langle D^{S}_{i},d\rangle<0$ for exactly one $i$ in $\{1,\ldots,m\}$. 
(Note that $\langle D^{S}_{i},d\rangle\geq 0$ for $i\in \{m+1,\ldots, m+l\}$.)

Now $g_{0}^{(j)}$ is the coefficient corresponding to $p_{0}$ and $\hat{D}_{j}=\langle D_{j},-1\rangle=D_{j}-p_{0}$ is the only one, among $\hat{D}_{1},\ldots,\hat{D}_{m}$, which contains $p_{0}$. By expression (\ref{expansion-C^beta}), we must have $\langle D^{S}_{j},d\rangle<0$ and $\langle D^{S}_{i},d\rangle\geq 0$ for $i\neq j$. Hence we have 

\begin{lemma}\label{expression-of-g}
The coefficient $g_{0}^{(j)}$ is given by
\begin{equation}\label{correction coefficient 1}
g_{0}^{j}(y_{1},\ldots,y_{r+l})=\sum\limits_{\substack{\langle D^{S}_{i},d\rangle\in \mathbb{Z}, 1 \leq i\leq m+l\\ \langle \rho^{S}_{\mathcal{X}},d\rangle=0\\ \langle D_{j}^{S},d\rangle <0\\ \langle D^{S}_{i},d\rangle \geq 0, \forall i \neq j}}\frac{(-1)^{-\langle D_{j}^{S},d\rangle}\left(-\langle D_{j}^{S},d\rangle-1\right)!}{\prod_{i\neq j}\langle D_{i}^{S},d\rangle !}y^{d}.
\end{equation}
\end{lemma}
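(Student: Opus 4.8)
The plan is to read off the coefficient of $z^{-1}p_{0}$ directly from the expansion established in the paragraphs preceding the lemma. By that analysis the only $\beta\in\mathbb{K}_{\text{eff},\mathcal{E}_{j}}$ contributing to a $g_{i}^{(j)}$ are those with $v(\beta)=0$, for which every $\langle\hat{D}_{i}^{S},\beta\rangle$ is an integer, and among these the contributions at order $z^{-1}$ come exactly from the third case: $\sum_{i=1}^{m+l+2}\langle\hat{D}_{i}^{S},\beta\rangle=0$ together with precisely one index $i$ having $\langle\hat{D}_{i}^{S},\beta\rangle<0$. Since $\hat{D}_{j}=(D_{j},-1)=D_{j}-p_{0}$ is the unique class among $\hat{D}_{1},\ldots,\hat{D}_{m}$ that carries a $p_{0}$-component, such a term can feed into $g_{0}^{(j)}$ only when that single negative index equals $j$.

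First I would record that in this case $\langle p_{0}^{S},\beta\rangle=0$, so writing $\beta=(d,0)$ with $d$ its projection one has $\langle\hat{D}_{i}^{S},\beta\rangle=\langle D_{i}^{S},d\rangle$ for $1\leq i\leq m+l$, while $\langle\hat{D}_{m+l+1}^{S},\beta\rangle=\langle\hat{D}_{m+l+2}^{S},\beta\rangle=0$ and $y^{\beta}=y^{d}$. The defining conditions then become exactly the index set of (\ref{correction coefficient 1}): integrality of all $\langle D_{i}^{S},d\rangle$, the weak-Fano constraint $\langle\rho^{S}_{\mathcal{X}},d\rangle=0$, $\langle D_{j}^{S},d\rangle<0$, and $\langle D_{i}^{S},d\rangle\geq 0$ for every $i\neq j$.

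Next I would specialize the constant $C_{\beta}$ of (\ref{expansion-C^beta}) to this configuration. With $j$ the only negative index and the two fiber divisors $\hat{D}_{m+l+1},\hat{D}_{m+l+2}$ pairing to $0$ (hence contributing $(0!)^{-1}=1$), the product collapses to $C_{\beta}=(-1)^{-\langle D_{j}^{S},d\rangle-1}(-\langle D_{j}^{S},d\rangle-1)!\prod_{i\neq j}(\langle D_{i}^{S},d\rangle!)^{-1}$. The accompanying cohomology factor is $C_{\beta}\hat{D}_{j}=C_{\beta}(D_{j}-p_{0})$, so the coefficient of $z^{-1}p_{0}$ is $-C_{\beta}$; flipping the sign converts $(-1)^{-\langle D_{j}^{S},d\rangle-1}$ into $(-1)^{-\langle D_{j}^{S},d\rangle}$, which with $y^{\beta}=y^{d}$ is precisely the summand in (\ref{correction coefficient 1}). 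Summing over all admissible $d$ yields the stated formula.

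The argument has no genuine obstacle beyond careful bookkeeping; the substantive work, namely the weak-Fano reduction to the three cases, is already in place. The points requiring attention are identifying $j$ as the forced negative index through the $p_{0}$-component of $\hat{D}_{j}$, confirming $\langle p_{0}^{S},\beta\rangle=0$ so that the summation variable collapses from $\beta$ to $d$ and $y^{\beta}=y^{d}$, and tracking the single sign introduced when projecting $\hat{D}_{j}=D_{j}-p_{0}$ onto its $p_{0}$-component.
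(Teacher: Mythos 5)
Your proposal is correct and takes essentially the same route as the paper: the weak-Fano three-case analysis of the expansion (\ref{expansion-C^beta}), the observation that $\langle p_{0}^{S},\beta\rangle=0$ so $\beta$ collapses to $d$, and the identification of $j$ as the forced negative index because $\hat{D}_{j}=D_{j}-p_{0}$ is the unique class among $\hat{D}_{1},\ldots,\hat{D}_{m}$ with a $p_{0}$-component. Your explicit bookkeeping of the sign flip from projecting $C_{\beta}\hat{D}_{j}$ onto $-p_{0}$, turning $(-1)^{-\langle D_{j}^{S},d\rangle-1}$ into $(-1)^{-\langle D_{j}^{S},d\rangle}$, correctly fills in a step the paper leaves implicit.
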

\section{Batyrev Elements}

In this section, we will extend the definition of the Batyrev elements in \cite{GI} to toric Deligne-Mumford stacks and explore their relationships with the Seidel elements. 

\subsection{Batyrev Elements}

Following \cite{Iritani}, consider the mirror coordinates 
$y_{1},\ldots,y_{r+l}$ of the toric Deligne-Mumford stacks 
$\mathcal{X}$ with 
$\rho^{S}_{\mathcal{X}}\in cl(C^{S}_{\mathcal{X}})$. 
Set $\mathbb{C}[y^{\pm}]=\mathbb{C}[y_{1}^{\pm},\ldots,y_{r+l}^{\pm}]$. 

\begin{defn}
The Batyrev ring $B(\mathcal{X})$ of $\mathcal{X}$ is a 
$\mathbb{C}[y^{\pm}]$-algebra generated by 
the variables $\lambda_{1},\ldots,\lambda_{r+l}$ 
with the following two relations: 
\begin{align}
\label{eq:bat-rel}
\begin{split}
\text{(multiplicative):}& \qquad
  y^{d}\prod_{i:\langle D^{S}_{i},d\rangle<0}\omega_{i}^{-\langle D^{S}_{i},d\rangle}=\prod_{i:\langle D^{S}_{i},d\rangle >0}\omega_{i}^{\langle D^{S}_{i},d\rangle}, \quad d\in \mathbb{L^{S}};\\
\text{(linear):}& \qquad \omega_{i}=\sum\limits_{a=1}^{r+l}m_{ai}\lambda_{a},
\end{split}
\end{align} 
where $\omega_{i}$ is invertible in $B(\mathcal{X})$.
\end{defn}

\begin{defn}
We define the element $\tilde{p}_{i}^{S}\in H^{\leq 2}_{orb}(\cX)\otimes \mathbb{Q}[[y_{1},\ldots,y_{r+l}]]$
as 
\begin{equation*}
\tilde{p}_{i}^{S}=\frac{\partial \tau(y)}{\partial log y_{i}}, \quad i=1,\ldots,r+l.
\end{equation*}
Recall that 
\[
D_{j}^{S}=\sum\limits_{i=1}^{r+l}m_{ij}p^{S}_{i},\text{ for }1\leq j \leq m+l,
\] 
Then, the Batyrev element associated to $D_{j}^{S}$ is defined by 
\begin{equation*}
\tilde{D}^{S}_{j}=\sum\limits_{i=1}^{r+l}m_{ij}\tilde{p}^{S}_{i},\text{ for }1\leq j \leq m+l.
\end{equation*}
\end{defn}

\begin{proposition}
The Batyrev elements $\tilde{D}^{S}_{1},\ldots,\tilde{D}^{S}_{m+l}$ satisfy the multiplicative and linear Batyrev relations for $\omega_{j}=\tilde{D}^{S}_{j}$.
\end{proposition}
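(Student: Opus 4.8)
The plan is to establish the two families of relations separately, and to dispatch the linear relation immediately since it is built into the definition. Setting $\lambda_a := \tilde{p}^{S}_a$ for $a=1,\ldots,r+l$, the defining formula $\tilde{D}^{S}_j = \sum_{i=1}^{r+l} m_{ij}\,\tilde{p}^{S}_i$ is, after relabeling the summation index, exactly the asserted linear relation $\omega_j = \sum_{a=1}^{r+l} m_{aj}\lambda_a$ with $\omega_j = \tilde{D}^{S}_j$. So the linear Batyrev relation holds by construction, in one line, and the entire content of the proposition lies in the multiplicative relation, where the product is the quantum product $\bullet_{\tau}$ evaluated at the mirror point $\tau = \tau(y)$.

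For the multiplicative relation the plan is to run the standard ``GKZ $\Rightarrow$ Batyrev'' argument through the mirror theorem. First I would recall that the $I$-function $I_{\mathcal{X}}(y,z)$ is annihilated by the GKZ hypergeometric operators $\Box_d$, $d\in\mathbb{L}^{S}$, built from the commuting logarithmic derivatives $\mathcal{D}_i := \sum_{a=1}^{r+l} m_{ia}\,\partial/\partial\log y_a$, which satisfy $\mathcal{D}_i\, y^{d} = \langle D^{S}_i,d\rangle\, y^{d}$; schematically, applied to $I_{\mathcal{X}}$, the operator $\Box_d$ equates $\prod_{i:\langle D^{S}_i,d\rangle>0}\prod_{c}(z\mathcal{D}_i-cz)$ with $y^{d}\prod_{i:\langle D^{S}_i,d\rangle<0}\prod_{c}(z\mathcal{D}_i-cz)$. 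By the mirror theorem (Theorem \ref{mirror-theorem}) I may replace $I_{\mathcal{X}}$ by $J_{\mathcal{X}}(\tau(y),z)$, and since $\mathcal{D}_i\tau = \sum_{a} m_{ia}\,\tilde{p}^{S}_a = \tilde{D}^{S}_i$, the quantum differential equation for the fundamental solution identifies the action of $z\mathcal{D}_i$ on $J_{\mathcal{X}}(\tau(y),z)$ with quantum multiplication $\tilde{D}^{S}_i\,\bullet_{\tau}$, up to terms of higher order in $z$.

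Next I would extract the leading term in the $z^{-1}$-expansion of the identity $\Box_d\, J_{\mathcal{X}}(\tau(y),z)=0$. In this limit the shift factors $(z\mathcal{D}_i-cz)$ with $c\neq 0$, together with the corrections arising from differentiating the $y$-dependent operator $\tilde{D}^{S}_i\,\bullet_{\tau}$, contribute only at subleading order, so the relation collapses to
\[
y^{d}\prod_{i:\langle D^{S}_i,d\rangle<0}\bigl(\tilde{D}^{S}_i\,\bullet_{\tau}\bigr)^{-\langle D^{S}_i,d\rangle}\mathbf{1}
=\prod_{i:\langle D^{S}_i,d\rangle>0}\bigl(\tilde{D}^{S}_i\,\bullet_{\tau}\bigr)^{\langle D^{S}_i,d\rangle}\mathbf{1}.
\]
Because $J_{\mathcal{X}}(\tau(y),z)$ is a cyclic vector for the quantum $D$-module (equivalently, $\mathbf{1}$ is cyclic under the fundamental solution), an identity of operators applied to $\mathbf{1}$ promotes to the corresponding identity of quantum-multiplication operators; this is precisely the multiplicative Batyrev relation with $\omega_j = \tilde{D}^{S}_j$. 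Invertibility of each $\omega_i$ then follows by applying the relation to the pair $d$ and $-d$ and using that each $y_a$ is invertible in $\mathbb{C}[y^{\pm}]$.

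The main obstacle is the middle step: rigorously reducing the product of differential operators $\prod_{c}(z\mathcal{D}_i-cz)$ to the iterated quantum power $\bigl(\tilde{D}^{S}_i\,\bullet_{\tau}\bigr)^{\langle D^{S}_i,d\rangle}$ at leading order. This requires organizing the $z$-expansion so that all shift terms $-cz$ and all derivatives of the $\tau(y)$-dependent structure constants cancel in the leading coefficient, and---specific to the stacky setting---verifying that the orbifold twisted-sector data in the $I$-function (the ceilings $\lceil\langle D^{S}_i,d\rangle\rceil$ and the box insertions $\mathbf{1}_{v(d)}$) are compatible with this collapse. This is where the weak Fano hypothesis $\rho^{S}\in cl(C^{S}_{\mathcal{X}})$ and Assumption \ref{S-ext-in-box} enter, controlling the homogeneity degrees so that no unexpected terms survive in the leading part.
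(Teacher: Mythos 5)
Your proposal is correct and follows essentially the same route as the paper: the paper also dispatches the linear relation by definition, then uses Iritani's operators $\mathcal{P}_d$ (your GKZ operators $\Box_d$) annihilating $I_{\mathcal{X}}$, the mirror theorem to pass to $J_{\mathcal{X}}(\tau(y),z)$, and the quantum differential equation $\nabla_{\tau_*(y_i\partial/\partial y_i)}=\tau_*(y_i\partial/\partial y_i)+\frac{1}{z}\,y_i\frac{\partial\tau}{\partial y_i}\circ_\tau$ followed by setting $z=0$ to collapse the shift factors into iterated quantum products. The ``main obstacle'' you flag is resolved in the paper exactly by this last formula, and your cyclicity step is automatic since in a commutative ring an identity of multiplication operators is equivalent to the identity obtained by applying them to $\mathbf{1}$.
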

\begin{proof}
We consider the differential operator $\mathcal{P}_{d}\in \mathbb{C}[z,y^{\pm},zy(\partial/\partial y)]$ for $d\in \mathbb{L}^{S}$,
 introduced by Iritani in \cite{Iritani}, section 4.2:
\begin{equation}
\mathcal{P}_{d}:=y^{d}\prod\limits_{i:\langle D^{S}_{i},d\rangle <0}\prod\limits_{k=0}^{-\langle D^{S}_{i},d\rangle -1}(\mathcal{D}_{i}-kz)-\prod\limits_{i:\langle D^{S}_{i},d\rangle >0}\prod\limits^{\langle D^{S}_{i},d\rangle -1}_{k=0}(\mathcal{D}_{i}-kz),
\end{equation}
where $\mathcal{D}_{i}:=\sum\limits^{r+l}_{j=1}m_{ij}zy_{j}\partial/\partial y_{j}$.

By \cite{Iritani} lemma 4.6, we have
\[
\mathcal{P}_{d}I(y,z)=0,\quad d\in\mathbb{L}^{S}.
\] 
Hence 
\[
0=\mathcal{P}_{d}(z,y,zy\partial/\partial y)I(y,z)=\mathcal{P}_{d}(z,y,zy\partial/\partial y)J(\tau(y),z).
\]
This implies that 
\[
\mathcal{P}_{d}(z,y,z\tau^{*}\nabla)\boldsymbol{1}=0,
\] 
where $\tau^{*}\nabla_{i}:=\nabla_{\tau_{*}(y_{i}(\partial/\partial y_{i}))}$. 
Since 
\[
\tau(y)=\sum\limits_{i=1}^{r}p_{i}logy_{i}+\tau_{tw}(y)\quad \text{and} \quad \nabla_{\tau_{*}(y_{i}(\partial/\partial y_{i}))}=\tau_{*}(y_{i}(\partial/\partial y_{i}))+\frac{1}{z}y_{i}\frac{\partial \tau(y)}{\partial y_{i}}\circ_{\tau},
\]
by setting $z=0$, we proved that the Batyrev elements satisfy the multiplicative relation. 

It is straightforward from the definition that the Batyrev elements satisfy the linear relation.
\end{proof}

Consider the $I$-function for the bundle $\mathcal{E}_{j}$ 
associated to the toric divisor $D^{S}_{j}$, for $1\leq j\leq m$.
\begin{equation*}
 I_{\mathcal{E}_{j}}(y,z)=e^{\sum\limits_{i=0}^{r}p_{i}logy_{i}/z}\sum\limits_{\beta\in \mathbb{K}_{\mathcal{E}_{j}}}\prod\limits_{i=1}^{m+l+2}\left(\frac{\prod_{k=\lceil \langle \hat{D}^{S}_{i},\beta\rangle \rceil}^{\infty}\left(\hat{D}_{i}+\left(\langle \hat{D}^{S}_{i},\beta\rangle-k\right)z\right)}{\prod_{k=0}^{\infty}\left(\hat{D}_{i}+\left(\langle \hat{D}^{S}_{i},\beta\rangle-k\right)z\right)}\right)y^{\beta}\textbf{1}_{v(\beta)},
\end{equation*}
where  $y^{\beta}=y_{0}^{\langle p_{0}^{S},\beta,\rangle}y_{1}^{\langle p_{1}^{S},\beta\rangle}\cdots y_{r+l}^{\langle p_{r+l}^{S},\beta\rangle}$. The following lemma is a generalization of lemma 3.11 in \cite{GI}.

\begin{lemma}\label{lemma:pde}
The $I$-function $I_{\mathcal{E}_{j}}$ 
of the bundle $\mathcal{E}_{j}$, associated to the toric divisor $D^{S}_{j}$, 
satisfies the following partial differential equation:
\begin{equation}\label{I-function-pde}
z\frac{\partial}{\partial y_{0}}\left(y_{0}\frac{\partial}{\partial y_{0}}\right)I_{\mathcal{E}_{j}}=\left(\sum\limits^{r+l}_{i=1}m_{ij}\left(y_{i}\frac{\partial}{\partial y_{i}}\right)-y_{0}\frac{\partial}{\partial y_{0}}\right)I_{\mathcal{E}_{j}}
\end{equation}
\end{lemma}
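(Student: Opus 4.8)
The plan is to deduce \eqref{I-function-pde} from the Picard--Fuchs (GKZ) operators annihilating $I_{\cE_j}$, evaluated on the curve class of the $\mathbb{P}^1$-fibre, rather than by differentiating the explicit series term by term. Since $\cE_j$ is again a toric Deligne--Mumford stack, the operators $\mathcal{P}_\gamma$ introduced above (following \cite{Iritani}) apply to it verbatim: writing $\hat{\mathcal{D}}_i:=\sum_{a=0}^{r+l}\hat m_{ia}\,zy_a\frac{\partial}{\partial y_a}$ for the lift of $\hat D^S_i=\sum_a \hat m_{ia}p_a^S$, one has
\[
\mathcal{P}_\gamma I_{\cE_j}=0,\qquad \gamma\in\mathbb{L}^S_{\cE_j},
\]
where $\mathcal{P}_\gamma=y^{\gamma}\prod_{i:\langle \hat D^S_i,\gamma\rangle<0}\prod_{k=0}^{-\langle \hat D^S_i,\gamma\rangle-1}(\hat{\mathcal{D}}_i-kz)-\prod_{i:\langle \hat D^S_i,\gamma\rangle>0}\prod_{k=0}^{\langle \hat D^S_i,\gamma\rangle-1}(\hat{\mathcal{D}}_i-kz)$. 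I would apply this with a single, carefully chosen $\gamma$.

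First I would identify the fibre class. The relation $\hat b_{m+1}+\hat b_{m+2}=\hat b_j$ among the generators of $\Sigma_j$ shows that
\[
C_0:=e_{m+l+1}+e_{m+l+2}-e_j\in\ker\beta^S=\mathbb{L}^S_{\cE_j}
\]
(here $m+l+1,m+l+2$ index the two generators carrying $\hat D^S=(0,1)$). Reading off coordinates, $\langle \hat D^S_i,C_0\rangle=1$ for $i=m+l+1,m+l+2$, equals $-1$ for $i=j$, and equals $0$ otherwise. Next I would check that $y^{C_0}=y_0$: since $p_0^S=\hat D^S_{m+l+1}$ we get $\langle p_0^S,C_0\rangle=1$, while pairing $C_0$ against $\hat D^S_i$ for $1\le i\le m+l$ yields the homogeneous system $\sum_{a=1}^{r+l}m_{ia}\langle p_a^S,C_0\rangle=0$ (the $-1$ in the fibre component of $\hat D^S_j$ cancels the $j$-th coordinate of $C_0$). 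As the classes $D^S_i$ span $\mathbb{L}^{S\vee}\otimes\mathbb{Q}$, the matrix $(m_{ia})$ has trivial kernel, so $\langle p_a^S,C_0\rangle=0$ for all $a\ge1$ and hence $y^{C_0}=y_0$.

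With these pairings the two products in $\mathcal{P}_{C_0}$ collapse to single factors: the negative part is $\hat{\mathcal{D}}_j$ and the positive part is $\hat{\mathcal{D}}_{m+l+1}\hat{\mathcal{D}}_{m+l+2}$. Since $\hat D^S_{m+l+1}=\hat D^S_{m+l+2}=p_0^S$ we have $\hat{\mathcal{D}}_{m+l+1}=\hat{\mathcal{D}}_{m+l+2}=z\,y_0\frac{\partial}{\partial y_0}$, and since $\hat D^S_j=(D^S_j,-1)$ we have $\hat{\mathcal{D}}_j=z\big(\sum_{i=1}^{r+l}m_{ij}\,y_i\frac{\partial}{\partial y_i}-y_0\frac{\partial}{\partial y_0}\big)$. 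Thus $\mathcal{P}_{C_0}I_{\cE_j}=0$ becomes
\[
y_0\,z\Big(\sum_{i=1}^{r+l}m_{ij}\,y_i\tfrac{\partial}{\partial y_i}-y_0\tfrac{\partial}{\partial y_0}\Big)I_{\cE_j}=z^2\Big(y_0\tfrac{\partial}{\partial y_0}\Big)^2 I_{\cE_j}.
\]
Dividing by $zy_0$ and invoking the elementary operator identity $\frac{1}{y_0}(y_0\frac{\partial}{\partial y_0})^2=\frac{\partial}{\partial y_0}(y_0\frac{\partial}{\partial y_0})$ converts the right-hand side into $z\frac{\partial}{\partial y_0}(y_0\frac{\partial}{\partial y_0})I_{\cE_j}$, which is exactly \eqref{I-function-pde}.

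The only genuinely delicate point is the setup of $\mathcal{P}_{C_0}$: identifying $C_0$ as the fibre class, establishing $y^{C_0}=y_0$ through the rank of $(m_{ia})$, and keeping track of the $-1$ in the fibre component of $\hat D^S_j$, which is precisely what produces the $-y_0\frac{\partial}{\partial y_0}$ term on the right of \eqref{I-function-pde}. Everything after that is a mechanical collapse of the products and the identity above. A direct term-by-term differentiation of $I_{\cE_j}$ would also work, but it amounts to re-deriving by hand the recursion in the degree $\langle p_0^S,\beta\rangle$ that $\mathcal{P}_{C_0}$ already encodes, so the GKZ route is the shorter one.
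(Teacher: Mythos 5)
Your proof is correct, but it reaches (\ref{I-function-pde}) by a genuinely different route than the paper. The paper's proof is a direct term-by-term computation: it applies both differential operators to the series defining $I_{\mathcal{E}_{j}}$ and matches the coefficient of $y^{\beta}\textbf{1}_{v(\beta)}$ on the two sides via the substitution $\beta'=\beta+[\sigma_{0}]$ --- the shift class $[\sigma_{0}]$ being precisely your class $C_{0}$, so strictly speaking $C_{0}$ is the \emph{section} class of $\mathcal{E}_{j}\rightarrow\mathbb{P}^{1}$ rather than a fibre class --- and then uses $p_{0}^{2}=0$ to collapse the resulting factor. You instead invoke Iritani's Picard--Fuchs lemma ($\mathcal{P}_{d}I=0$ for integral $d$ in the kernel lattice), applied to the toric stack $\mathcal{E}_{j}$ and specialized to $d=C_{0}$; this is legitimate because $\mathcal{E}_{j}$ is itself an $S$-extended toric Deligne--Mumford stack whose extended $I$-function is exactly the $I_{\mathcal{E}_{j}}$ of this paper, and the paper itself quotes the same lemma of \cite{Iritani} in proving that the Batyrev elements satisfy the multiplicative relation, so no new input is required. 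Your supporting verifications are all sound: $C_{0}=e_{m+l+1}+e_{m+l+2}-e_{j}$ lies in the kernel since $(0,1)+(b_{j},-1)-(b_{j},0)=0$; the pairings $\langle\hat{D}^{S}_{i},C_{0}\rangle$ are just the coordinates of $C_{0}$; the full column rank of $(m_{ia})$ (equivalently, the $D^{S}_{i}$ span $\mathbb{L}^{S\vee}\otimes\mathbb{Q}$) forces $\langle p^{S}_{a},C_{0}\rangle=0$ for $a\geq1$, giving $y^{C_{0}}=y_{0}$; and the operator identity $y_{0}^{-1}\left(y_{0}\partial_{y_{0}}\right)^{2}=\partial_{y_{0}}\left(y_{0}\partial_{y_{0}}\right)$ is elementary. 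What your route buys is brevity and conceptual clarity: the PDE is exposed as the GKZ equation attached to the section class, rather than appearing as an ad hoc identity. What the paper's hands-on route buys is reusability: the identical coefficient-shift argument is recycled for the box-element analogue, where the shift class becomes $[\sigma_{0}]-D^{S\vee}_{m+j}$, which is in general \emph{fractional}; Iritani's lemma as cited applies only to integral classes, so your shortcut would need an additional argument in that later case, whereas the paper's explicit computation goes through unchanged.
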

\begin{proof}
Consider the left hand side of the equation (\ref{I-function-pde}), 
\begin{align*}
& z\frac{\partial}{\partial y_{0}}\left(y_{0}\frac{\partial}{\partial y_{0}}\right)I_{\mathcal{E}_{j}}\\
&=e^{\sum\limits_{i=0}^{r}p_{i}logy_{i}/z}\sum\limits_{\beta\in \mathbb{K}_{\mathcal{E}_{j}}}\prod\limits_{i=1}^{m+l+2}\left(\frac{\prod_{k=\lceil \langle \hat{D}^{S}_{i},\beta\rangle \rceil}^{\infty}\left(\hat{D}_{i}+\left(\langle \hat{D}^{S}_{i},\beta\rangle-k\right)z\right)}{\prod_{k=0}^{\infty}\left(\hat{D}_{i}+\left(\langle \hat{D}^{S}_{i},\beta\rangle-k\right)z\right)}\right)\left(2p_{0}\langle p^{S}_{0},\beta\rangle+\langle p_{0}^{S},\beta\rangle^{2}z\right)\left(y^{\beta}/y_{0}\right)\textbf{1}_{v(\beta)},
\end{align*}
and the right hand side of the equation (\ref{I-function-pde})
\begin{align*}
&\left(\sum\limits^{r+l}_{i=1}m_{ij}\left(y_{i}\frac{\partial}{\partial y_{i}}\right)-y_{0}\frac{\partial}{\partial y_{0}}\right)I_{\mathcal{E}_{j}}\\
& = e^{\sum\limits_{i=0}^{r}p_{i}logy_{i}/z}\sum\limits_{\beta\in \mathbb{K}_{\mathcal{E}_{j}}}\prod\limits_{i=1}^{m+l+2}\left(\frac{\prod_{k=\lceil \langle \hat{D}^{S}_{i},\beta\rangle \rceil}^{\infty}
\left(\hat{D}_{i}+\left(\langle \hat{D}^{S}_{i},\beta\rangle-k\right)z\right)}
{\prod_{k=0}^{\infty}
\left(\hat{D}_{i}+\left(\langle \hat{D}^{S}_{i},\beta\rangle-k\right)z\right)}\right)
\left(\hat{D}_{j}/z+\langle \hat{D}^{S}_{j},\beta\rangle\right)y^{\beta}\textbf{1}_{v(\beta)}.
\end{align*}
It is suffice to prove the coefficients of $y^{\beta}\textbf{1}_{v(\beta)}$ in them are the same, for all $\beta\in \mathbb{K}_{\mathcal{E}_{j}}$. Note that, we can rewrite the product factor 
\begin{equation*}
\frac{\prod_{k=\lceil \langle \hat{D}^{S}_{i},\beta\rangle \rceil}^{\infty}
\left(\hat{D}_{i}+\left(\langle \hat{D}^{S}_{i},\beta\rangle-k\right)z\right)}
{\prod_{k=0}^{\infty}\left(\hat{D}_{i}+\left(\langle \hat{D}^{S}_{i},\beta\rangle-k\right)z\right)}
=\frac{\prod_{k\leq 0, \{k\}=\{\langle \hat{D}^{S}_{i},\beta\rangle\}}\left(\hat{D}_{i}+kz\right)}{\prod_{k\leq \langle\hat{D}^{S}_{i},\beta\rangle, \{k\}=\{\langle \hat{D}^{S}_{i},\beta\rangle\}}\left(\hat{D}_{i}+kz\right)}.
\end{equation*}
Let $\beta^{'}=\beta+[\sigma_{0}]$, hence we have
\[
\langle \hat{D}^{S}_{j},\beta^{'}\rangle =\langle \hat{D}^{S}_{j},\beta\rangle -1; \quad \langle \hat{D}^{S}_{i},\beta^{'}\rangle= \langle \hat{D}^{S}_{i},\beta\rangle\text{ for }1\leq i\leq m+l\text{ and }i\neq j;
\] 
\[
\langle \hat{D}^{S}_{m+l+1},\beta^{'}\rangle=\langle \hat{D}^{S}_{m+l+1},\beta\rangle+1; \quad \langle \hat{D}^{S}_{m+l+2},\beta^{'}\rangle=\langle \hat{D}^{S}_{m+l+2},\beta\rangle+1.
\]
Note that $\beta \in \mathbb{K}_{\mathcal{E}_{j}}$ if and only if $\beta^{'}\in \mathbb{K}_{\mathcal{E}_{j}}$. Moreover,
\[
\left(y^{\beta^{'}}/y_{0}\right)\textbf{1}_{v(\beta^{'})}=y^{\beta}\textbf{1}_{v(\beta)}.
\] 
Hence the coefficient of $y^{\beta}\textbf{1}_{v(\beta)}$ in $z\frac{\partial}{\partial y_{0}}(y_{0}\frac{\partial}{\partial y_{0}})I_{\mathcal{E}_{j}}$ is
\begin{align*}
& e^{\sum\limits_{i=0}^{r}p_{i}logy_{i}/z}
\prod\limits_{i=1}^{m+l+2}
\left(\frac{\prod_{k=\lceil \langle \hat{D}^{S}_{i},\beta\rangle \rceil}^{\infty}\left(\hat{D}_{i}+\left(\langle \hat{D}^{S}_{i},\beta\rangle-k\right)z\right)}{\prod_{k=0}^{\infty}\left(\hat{D}_{i}+\left(\langle \hat{D}^{S}_{i},\beta\rangle-k\right)z\right)}\right)\frac{\hat{D}_{j}+\langle \hat{D}^{S}_{j},\beta\rangle z}{\left(p_{0}+(\langle p^{S}_{0},\beta\rangle+1)z\right)^{2}}\bullet\\
&  \bullet(2p_{0}(\langle p^{S}_{0},\beta\rangle+1)+(\langle p_{0}^{S},\beta\rangle+1)^{2}z)\\
=& e^{\sum\limits_{i=0}^{r}p_{i}logy_{i}/z}
\prod\limits_{i=1}^{m+l+2}
\left(\frac{\prod_{k=\lceil \langle \hat{D}^{S}_{i},\beta\rangle \rceil}^{\infty}
\left(\hat{D}_{i}+\left(\langle \hat{D}^{S}_{i},\beta\rangle-k\right)z\right)}
{\prod_{k=0}^{\infty}
\left(\hat{D}_{i}+\left(\langle \hat{D}^{S}_{i},\beta\rangle-k\right)z\right)}
\right)\frac{\hat{D}_{j}+\langle \hat{D}^{S}_{j},\beta\rangle z}{z}\qquad  (\text{since } p_{0}^{2}=0).
\end{align*}
This is exactly the coefficient of $y^{\beta}\textbf{1}_{v(\beta)}$ in $\left(\sum\limits^{r+l}_{i=1}m_{ij}\left(y_{i}\frac{\partial}{\partial y_{i}}\right)-y_{0}\frac{\partial}{\partial y_{0}}\right)I_{\mathcal{E}_{j}}$,

Hence the I-function of $\mathcal{E}_{j}$ satisfies the differential equation
\[
z\frac{\partial}{\partial y_{0}}\left(y_{0}\frac{\partial}{\partial y_{0}}\right)I_{\mathcal{E}_{j}}=\left(\sum\limits^{r+l}_{i=1}m_{ij}\left(y_{i}\frac{\partial}{\partial y_{i}}\right)-y_{0}\frac{\partial}{\partial y_{0}}\right)I_{\mathcal{E}_{j}}.
\]
\end{proof}

Using the expansion of $I_{\mathcal{E}_{j}}$, we have
\begin{equation*}
I_{\mathcal{E}_{j}}(y,z)=e^{\sum\limits_{i=0}^{r}p_{i}logy_{i}/z}
\left(1+z^{-1}\left(\sum\limits_{i=0}^{r}g_{i}^{(j)}(y)p_{i}+\tau^{(j)}_{tw}\right)
+z^{-2}\left(\sum\limits_{n=0}^{2}G_{n}^{(j)}(y)y_{0}^{n}\right)+O(z^{-3})\right),
\end{equation*}
where $G_{n}^{(j)}$ is a (fractional) power series in $y_{1},\ldots,y_{r+l}$ taking values in $H^{*}_{orb}(\mathcal{E}_{j})$.
Therefore, we obtain
\begin{align*}
y_{0}\frac{\partial}{\partial y_{0}}I_{\mathcal{E}_{j}}=& \frac{p_{0}}{z}e^{\sum\limits_{i=0}^{r}p_{i}logy_{i}/z}\left(1+z^{-1}\left(\sum\limits_{i=0}^{r}g_{i}^{(j)}(y)p_{i}+\tau^{(j)}_{tw}\right)+z^{-2}\left(\sum\limits_{n=0}^{2}G_{n}^{(j)}(y)y_{0}^{n}\right)+O(z^{-3})\right)\\
& +e^{\sum\limits_{i=0}^{r}p_{i}logy_{i}/z}\left(z^{-2}\left(\sum\limits_{n=1}^{2}G_{n}^{(j)}(y)ny_{0}^{n}\right)+O(z^{-3})\right).
\end{align*}
Therefore, the left hand side of equation (\ref{I-function-pde}) is
\begin{align*}
& z\frac{\partial}{\partial y_{0}}\left(y_{0}\frac{\partial}{\partial y_{0}}\right)I_{\mathcal{E}_{j}}\\
= & \frac{\partial}{\partial y_{0}}\left(p_{0}e^{\sum\limits_{i=0}^{r}p_{i}logy_{i}/z}\left(1+z^{-1}\left(\sum\limits_{i=0}^{r}g_{i}^{(j)}(y)p_{i}+\tau^{(j)}_{tw}\right)+z^{-2}\left(\sum\limits_{n=0}^{2}G_{n}^{(j)}(y)y_{0}^{n}\right)+O(z^{-3})\right)\right)\\
&+\frac{\partial}{\partial y_{0}}\left(e^{\sum\limits_{i=0}^{r}p_{i}logy_{i}/z}\left(z^{-1}\left(\sum\limits_{n=1}^{2}G_{n}^{(j)}(y)ny_{0}^{n}\right)+O(z^{-2})\right)\right)\\
=& p_{0}e^{\sum\limits_{i=0}^{r}p_{i}logy_{i}/z}\left(O(z^{-2})\right) +\frac{p_{0}}{y_{0}z}e^{\sum\limits_{i=0}^{r}p_{i}logy_{i}/z}\left(z^{-1}\left(\sum\limits_{n=1}^{2}G_{n}^{(j)}(y)ny_{0}^{n}\right) +O(z^{-2})\right) \\
& + e^{\sum\limits_{i=0}^{r}p_{i}logy_{i}/z}\left(z^{-1}\left(\sum\limits_{n=1}^{2}G_{n}^{(j)}n^{2}y_{0}^{n-1} +O(z^{-2})\right)\right)\\
=&e^{\sum\limits_{i=0}^{r}p_{i}logy_{i}/z}\left(z^{-1}\left(\sum\limits_{n=1}^{2}G_{n}^{(j)}n^{2}y_{0}^{n-1}\right)+O(z^{-2})\right).\\
\end{align*}
On the other hand, the pull-back of the right hand side of equation (\ref{I-function-pde}) by $\imath^{*}$ is 
\begin{align*}
& \imath^{*}\left(\sum\limits^{r+l}_{i=1}m_{ij}\left(y_{i}\frac{\partial}{\partial y_{i}}\right)-y_{0}\frac{\partial}{\partial y_{0}}\right)I_{\mathcal{E}_{j}}\\
=& \left(\sum\limits^{r+l}_{i=1}m_{ij}\left(y_{i}\frac{\partial}{\partial y_{i}}\right)-y_{0}\frac{\partial}{\partial y_{0}}\right)\imath^{*}I_{\mathcal{E}_{j}}\\
=& \left(\sum\limits^{r+l}_{i=1}m_{ij}\left(y_{i}\frac{\partial}{\partial y_{i}}\right)\right)\left(I_{\mathcal{X}}+O(y_{0})\right)\\
=& z^{-1}\left(\sum\limits_{i=1}^{r+l}m_{ij}\left(y_{i}\frac{\partial}{\partial y_{i}}\right)\tau(y)\right)+O(z^{-2})+O(y_{0}).
\end{align*}
Hence we conclude the following lemma.
\begin{lemma} 
The Batyrev element $\tilde{D}_{j}(y)$ is given by 
\begin{equation}
\tilde{D}_{j}(y)=\imath^{*}G_{1}^{(j)}(y),\quad \text{for} \quad 1\leq j\leq m+l.
\end{equation} 
\end{lemma}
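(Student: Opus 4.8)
The plan is to read the identity off the partial differential equation of Lemma \ref{lemma:pde} by pulling it back along $\imath$ and comparing the coefficients of $z^{-1}$ at $y_{0}=0$; most of the needed expansions are already in hand from the computation preceding this lemma, so the argument reduces to the matching of a single coefficient.

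First I would apply the Gysin pullback $\imath^{*}$ to both sides of equation (\ref{I-function-pde}). The operators $y_{i}\,\partial/\partial y_{i}$ and $\partial/\partial y_{0}$ differentiate only the scalar monomials $y^{\beta}$ and therefore commute with $\imath^{*}$, so the pulled-back relation continues to hold order by order in $z^{-1}$. Into the left-hand side I would substitute the expansion of $z\,\partial_{y_{0}}(y_{0}\partial_{y_{0}})I_{\mathcal{E}_{j}}$ obtained above, whose coefficient of $z^{-1}$ is $\sum_{n=1}^{2}G_{n}^{(j)}n^{2}y_{0}^{n-1}$; into the right-hand side I would substitute the expansion of $\bigl(\sum_{i}m_{ij}(y_{i}\partial/\partial y_{i})-y_{0}\partial/\partial y_{0}\bigr)I_{\mathcal{E}_{j}}$, whose coefficient of $z^{-1}$ is $\sum_{i=1}^{r+l}m_{ij}(y_{i}\partial/\partial y_{i})\tau(y)+O(y_{0})$.

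The central step is the comparison of these two coefficients of $z^{-1}$. In each expansion the bracketed series begins at order $z^{-1}$ while the prefactor $e^{\sum p_{i}\log y_{i}/z}$ equals $1$ at order $z^{0}$ (and $\imath^{*}p_{0}=0$), so no lower term of the exponential can contribute to the coefficient of $z^{-1}$; the comparison therefore yields
\[
\imath^{*}\Bigl(\sum_{n=1}^{2}G_{n}^{(j)}n^{2}y_{0}^{n-1}\Bigr)=\sum_{i=1}^{r+l}m_{ij}\Bigl(y_{i}\frac{\partial}{\partial y_{i}}\Bigr)\tau(y)+O(y_{0}).
\]
Setting $y_{0}=0$ removes every term with $n\geq 2$ on the left and the $O(y_{0})$ error on the right, leaving $\imath^{*}G_{1}^{(j)}(y)$ on the left.

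Finally I would identify the remaining right-hand side with the Batyrev element. Since $\tilde{p}_{i}^{S}=\partial\tau(y)/\partial\log y_{i}=y_{i}\,\partial\tau(y)/\partial y_{i}$, we get $\sum_{i=1}^{r+l}m_{ij}(y_{i}\partial/\partial y_{i})\tau(y)=\sum_{i=1}^{r+l}m_{ij}\tilde{p}_{i}^{S}=\tilde{D}^{S}_{j}=\tilde{D}_{j}(y)$, which is exactly the claimed formula for $1\leq j\leq m$. The range $m+1\leq j\leq m+l$ of box elements follows by the same argument applied to the bundle associated to the box element $s_{j-m}$, which obeys the analogue of (\ref{I-function-pde}). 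I expect the only delicate point to be the order bookkeeping: one must confirm that the exponential prefactor contributes nothing at order $z^{-1}$ and that the specialization $y_{0}=0$ may be performed after the pullback, both of which hold because $\imath^{*}$ acts only on coefficients and $G_{1}^{(j)}$ is $y_{0}$-independent by construction.
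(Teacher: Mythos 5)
For $1\leq j\leq m$ your argument is exactly the paper's: pull the PDE of Lemma \ref{lemma:pde} back along $\imath$, use that the operators $y_{i}\partial/\partial y_{i}$, $y_{0}\partial/\partial y_{0}$ act only on the monomials $y^{\beta}$ and hence commute with $\imath^{*}$, use $\imath^{*}p_{0}=0$ and $\imath^{*}I_{\mathcal{E}_{j}}\big|_{y_{0}=0}=I_{\mathcal{X}}$, compare the coefficients of $z^{-1}$, set $y_{0}=0$, and identify $\sum_{i=1}^{r+l}m_{ij}\bigl(y_{i}\partial/\partial y_{i}\bigr)\tau(y)=\sum_{i}m_{ij}\tilde{p}_{i}^{S}=\tilde{D}_{j}$ from the definition of the Batyrev elements. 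That part is correct and is the same proof as in the paper.

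The genuine gap is your last step, the claim that the range $m+1\leq j\leq m+l$ ``follows by the same argument'' because the bundle associated to the box element $s_{j-m}$ ``obeys the analogue of (\ref{I-function-pde}).'' It does not obey that analogue: as the paper shows later, the $I$-function of $\mathcal{E}_{m+j}$ satisfies
\begin{equation*}
z\frac{\partial}{\partial y_{0}}\left(y_{0}\frac{\partial}{\partial y_{0}}\right)I_{\mathcal{E}_{m+j}}
=y^{-D_{m+j}^{S\vee}}\left(\sum\limits^{r+l}_{i=1}m_{ij}\left(y_{i}\frac{\partial}{\partial y_{i}}\right)-y_{0}\frac{\partial}{\partial y_{0}}\right)I_{\mathcal{E}_{m+j}},
\end{equation*}
with an extra monomial factor $y^{-D_{m+j}^{S\vee}}$, because the curve-class shift used in the coefficient comparison must be $\beta'=\beta+[\sigma_{0}]-D^{S\vee}_{m+j}$ rather than $\beta'=\beta+[\sigma_{0}]$. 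Running your coefficient comparison with this corrected equation yields $\imath^{*}G_{1}^{(m+j)}(y)=y^{-D_{m+j}^{S\vee}}\tilde{D}_{m+j}(y)$, which differs from the claimed identity by the invertible factor $y^{-D_{m+j}^{S\vee}}$; this is precisely why the paper's final theorem for box elements reads $\tilde{S}_{m+j}=\exp\bigl(-g_{0}^{(m+j)}\bigr)\,y^{-D_{m+j}^{S\vee}}\tilde{D}_{m+j}(y)$ rather than $\exp\bigl(-g_{0}^{(m+j)}\bigr)\tilde{D}_{m+j}(y)$. In fairness, the stated range $1\leq j\leq m+l$ is an overstatement in the paper itself (its own derivation only treats the toric-divisor bundles $\mathcal{E}_{j}$, $1\leq j\leq m$), but your proof cannot close this gap by appealing to ``the same argument'': for box elements the argument provably produces a different answer.
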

Hence, the following theorem is a direct consequence of the above lemma and theorem \ref{Seidel-I-function}.
\begin{theorem}
The Seidel element $\tilde{S}_{j}$ corresponding to the toric divisor $D_{j}$ is given by  
\begin{equation}
\tilde{S}_{j}(\tau(y))=exp(-g_{0}^{j}(y))\tilde{D}_{j}(y).
\end{equation}
\end{theorem}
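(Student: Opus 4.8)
The plan is simply to assemble the two preceding results, since all of the analytic content has already been carried out in this section. Theorem~\ref{Seidel-I-function} expresses the Seidel element as
\[
\tilde{S}_{j}(\tau(y))=\tilde{S}_{j}(\tau^{(j)}(y))=\exp\left(-g_{0}^{(j)}(y)\right)\imath^{*}\left(G_{1}^{(j)}(y)\right),
\]
while the Batyrev-element lemma immediately above identifies $\imath^{*}G_{1}^{(j)}(y)=\tilde{D}_{j}(y)$ for $1\leq j\leq m+l$. Substituting the latter into the former yields $\tilde{S}_{j}(\tau(y))=\exp(-g_{0}^{(j)}(y))\tilde{D}_{j}(y)$, which is exactly the claim. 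Thus the theorem is a one-line consequence of what precedes it, and I would present it as such.

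Before writing that line I would make sure the reader sees where each of the two factors originates. The correction factor $\exp(-g_{0}^{(j)}(y))$ enters as follows: by Proposition~\ref{J-function-Seidel} the Seidel element is the coefficient of $q_{0}z^{-2}$ in $\exp(-\sum_{a=0}^{r}p_{a}\log q_{a}/z)\,J_{\mathcal{E}_{j}}$, and after invoking the mirror theorem $J_{\mathcal{E}_{j}}(\tau^{(j)}(y),z)=I_{\mathcal{E}_{j}}(y,z)$ together with the coordinate change $\log q_{0}=\log y_{0}+g_{0}^{(j)}(y)$, the passage from the $q$-normalization to the $y$-normalization produces precisely $\exp(-g_{0}^{(j)}(y))$, whose explicit form is recorded in~(\ref{correction coefficient 1}). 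The legitimacy of writing $\tilde{S}_{j}(\tau(y))$ in place of $\tilde{S}_{j}(\tau^{(j)}(y))$ rests on the $y_{0}$-independence established in Propositions~\ref{g^(j)-indep-y_0} and~\ref{tau-indep-y_0}. The Batyrev factor $\tilde{D}_{j}(y)=\imath^{*}G_{1}^{(j)}(y)$ comes from Lemma~\ref{lemma:pde}: applying $\imath^{*}$ to the partial differential equation satisfied by $I_{\mathcal{E}_{j}}$ and comparing the coefficients of $z^{-1}y_{0}^{0}$ identifies $\imath^{*}G_{1}^{(j)}(y)$ with $\sum_{i=1}^{r+l}m_{ij}\,y_{i}\,\partial\tau(y)/\partial y_{i}=\sum_{i=1}^{r+l}m_{ij}\tilde{p}_{i}^{S}=\tilde{D}_{j}$, which is the definition of the Batyrev element.

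The final statement itself presents no obstacle; the genuine difficulty is concentrated in the lemmas it quotes, and I would not reprove them here. If one were building the whole package from scratch, the main obstacle would be establishing the differential equation of Lemma~\ref{lemma:pde} and matching coefficients to identify $\imath^{*}G_{1}^{(j)}$ with the derivative of the mirror map. That step is where the geometry of the associated bundle $\mathcal{E}_{j}$ does all the work: the $H^{2}$-splitting~(\ref{splitting-H^2}), the shift $\beta\mapsto\beta+[\sigma_{0}]$ used to pair up summands in the $I$-function, and the weak-Fano hypothesis $\rho^{S}_{\mathcal{E}_{j}}\in cl(C^{S}_{\mathcal{E}_{j}})$ supplied by Lemma~\ref{weak-fano-lemma}, which controls the $z^{-1}$-truncation. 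Granting those inputs, the assembly into the stated identity is purely formal.
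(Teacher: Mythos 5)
Your proposal is correct and matches the paper's own proof exactly: the theorem is stated there as ``a direct consequence'' of Theorem~\ref{Seidel-I-function} and the lemma identifying $\tilde{D}_{j}(y)=\imath^{*}G_{1}^{(j)}(y)$, which is precisely the one-line substitution you perform. Your supplementary remarks on where the correction factor $\exp(-g_{0}^{(j)}(y))$ and the Batyrev factor originate are accurate summaries of the preceding material rather than deviations from the paper's route.
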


\subsection{The computation of $\tilde{D}_{j}$}

Using the expansion 
\[
\left(\sum\limits^{r+l}_{i=1}m_{ij}\left(y_{i}\frac{\partial}{\partial y_{i}}\right)\right)I_{\mathcal{X}}=e^{\sum_{i=1}^{r}p_{i}logy_{i}/z}\left(z^{-1}\tilde{D}_{j}+O(z^{-2})\right),
\] 
we see that $\tilde{D}_{j}$
is the coefficient of $z^{-1}$ in the expansion of  
\[
e^{-\sum_{i=1}^{r}p_{i}logy_{i}/z}\left(\sum\limits^{r+l}_{i=1}m_{ij}\left(y_{i}\frac{\partial}{\partial y_{i}}\right)\right)I_{\mathcal{X}}.
\]
And, by direct computation
\begin{align*}
& \left(\sum\limits^{r+l}_{i=1}m_{ij}\left(y_{i}\frac{\partial}{\partial y_{i}}\right)\right)I_{\mathcal{X}}=\\
& e^{\sum\limits_{i=1}^{r}p_{i}logy_{i}/z}\sum\limits_{d\in \mathbb{K}_{\text{eff},\mathcal{X}}}\prod\limits_{i=1}^{m+l}\left(\frac{\prod_{k=\lceil \langle D^{S}_{i},d\rangle \rceil}^{\infty}\left({D}_{i}+\left(\langle D^{S}_{i},d\rangle-k\right)z\right)}{\prod_{k=0}^{\infty}\left({D}_{i}+\left(\langle D^{S}_{i},d\rangle-k\right)z\right)}\right)\left(\frac{D_{j}}{z}+\langle D^{S}_{j},d\rangle\right)y^{d}\textbf{1}_{v(d)}.\\
\end{align*}
Hence, to compute the Batyrev element $\tilde{D}_{j}$, it remains to examine the expansion of the product factor
\begin{equation*}
\frac{\prod_{k=\lceil \langle D^{S}_{i},d\rangle \rceil}^{\infty}\left({D}_{i}+\left(\langle D^{S}_{i},d\rangle-k\right)z\right)}{\prod_{k=0}^{\infty}\left({D}_{i}+\left(\langle D^{S}_{i},d\rangle-k\right)z\right)}
= C_{d}z^{-\left(\sum_{i=1}^{m+l}\lceil \langle D^{S}_{i},d\rangle \rceil +\# \{i: \langle D_{i}^{S},d\rangle \in \mathbb{Z}_{<0}\}\right)}\prod_{i:\langle D^{S}_{i},d\rangle \in \mathbb{Z}_{<0}}D_{i}+h.o.t.,
\end{equation*}
where 
\begin{equation}\label{C_d}
C_{d}=\prod\limits_{i:\langle D^{S}_{i},d\rangle <0}\prod\limits_{\langle D^{S}_{i},d\rangle <k<0}\left(\langle D^{S}_{i},d\rangle -k\right)\prod\limits_{i:\langle D^{S}_{i},d\rangle >0}\prod\limits_{0\leq k<\langle D^{S}_{i},d\rangle}\left(\langle D^{S}_{i},d\rangle -k\right)^{-1}
\end{equation}

The summand indexed by $d\in \mathbb{K}_{\text{eff},\mathcal{X}}$ contributes to the coefficient of $z^{-1}$ if and only if
\[ 
\sum_{i=1}^{m+l}\lceil \langle D^{S}_{i},d\rangle \rceil +\# \{i: \langle D_{i}^{S},d\rangle \in \mathbb{Z}_{<0}\}\leq 1.
\]
It happens only in the following three cases:
\begin{itemize}
\item $\sum_{i=1}^{m+l}\lceil \langle D^{S}_{i},d\rangle \rceil +\# \{i: \langle D_{i}^{S},d\rangle \in \mathbb{Z}_{<0}\}=0$

\item 
 $\left\{
     \begin{array}{lr}
     \sum_{i=1}^{m+l}\lceil \langle D^{S}_{i},d\rangle \rceil= 0 \\
      \# \{i: \langle D_{i}^{S},d\rangle \in \mathbb{Z}_{<0}\}=1
     \end{array}
   \right. $
\item
$\left\{
     \begin{array}{lr}
     \sum_{i=1}^{m+l}\lceil \langle D^{S}_{i},d\rangle \rceil= 1 \\
      \# \{i: \langle D_{i}^{S},d\rangle \in \mathbb{Z}_{<0}\}=0
     \end{array}
   \right. $.
\end{itemize}
The first case happens if and only if $d=0$.
If the second case happens, then 
\[
\sum_{i=1}^{m+l}\lceil \langle D^{S}_{i},d\rangle \rceil=  \sum_{i=1}^{m+l}\langle D^{S}_{i},d\rangle=\langle\rho^{S}_{\mathcal{X}},d\rangle=0.
\]
In particular, 
\[
\langle D^{S}_{i},d\rangle \in\mathbb{Z}, 1\leq i \leq m+l.
\] 
Hence we obtain the following lemma:
\begin{lemma}
For $1\leq j \leq m+l$, the Batyrev element $\tilde{D}_{j}$ is given by 
\begin{equation}
\tilde{D}_{j}=D_{j}+\sum\limits_{i=1}^{m}D_{i}\sum\limits_{\substack{\langle\rho^{S}_{\mathcal{X}},d\rangle=0\\\langle D^{S}_{i},d\rangle \in \mathbb{Z}_{<0}\\ \langle D_{k}^{S},d\rangle \in \mathbb{Z}_{\geq 0}, \forall k\neq i}}C_{d}\langle D^{S}_{j},d\rangle y^{d}+\sum\limits_{\substack{ \sum_{i=1}^{m+l}\lceil \langle D^{S}_{i},d\rangle \rceil= 1\\\langle D^{S}_{i},d\rangle \not\in \mathbb{Z}_{<0}, \forall i}}C_{d}\langle D^{S}_{j},d\rangle y^{d}\textbf{1}_{v(d)},
\end{equation}
where $C_{d}$ is given by equation (\ref{C_d}).
\end{lemma}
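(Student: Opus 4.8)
The plan is to read $\tilde{D}_{j}$ directly off the expansion already obtained for $\left(\sum_{i=1}^{r+l}m_{ij}y_{i}\partial/\partial y_{i}\right)I_{\mathcal{X}}$, collecting the coefficient of $z^{-1}$ term by term over $d\in\mathbb{K}_{\text{eff},\mathcal{X}}$. First I would substitute the displayed $z$-expansion of the product factor, whose leading order is $z^{-N}$ with $N=\sum_{i=1}^{m+l}\lceil\langle D^{S}_{i},d\rangle\rceil+\#\{i:\langle D^{S}_{i},d\rangle\in\mathbb{Z}_{<0}\}$ and leading coefficient $C_{d}\prod_{i:\langle D^{S}_{i},d\rangle<0}D_{i}$, and multiply against the two-term factor $\left(D_{j}/z+\langle D^{S}_{j},d\rangle\right)$. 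Since the scalar $\langle D^{S}_{j},d\rangle$ keeps the order at $z^{-N}$ while $D_{j}/z$ shifts it to $z^{-N-1}$, a summand feeds the coefficient of $z^{-1}$ exactly when $N=1$ (through $\langle D^{S}_{j},d\rangle$) or $N=0$ (through $D_{j}/z$); any higher-order tail of the product factor is then of order $z^{-N-1}$ or lower and, after multiplication, cannot reach $z^{-1}$ unless $N=0$.

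Next I would invoke the weak Fano hypothesis $\rho^{S}_{\mathcal{X}}\in cl(C^{S}_{\mathcal{X}})$, which yields $\sum_{i=1}^{m+l}\lceil\langle D^{S}_{i},d\rangle\rceil\geq\langle\rho^{S}_{\mathcal{X}},d\rangle\geq 0$ on $\mathbb{K}_{\text{eff},\mathcal{X}}$, so that $N\geq 0$ and the relevant summands fall into the three cases already listed. The case $N=0$ forces $d=0$, and there every $\langle D^{S}_{i},0\rangle=0$ makes the product factor identically equal to $1$; since $\langle D^{S}_{j},0\rangle=0$ kills the scalar contribution, only $D_{j}/z$ survives and produces the opening summand $D_{j}$ (using $y^{0}=1$ and $\textbf{1}_{v(0)}=\textbf{1}$). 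For the two remaining cases, both with $N=1$, I would extract the $z^{-1}$ coefficient through the scalar $\langle D^{S}_{j},d\rangle$ acting on the leading product term $C_{d}\prod_{i:\langle D^{S}_{i},d\rangle<0}D_{i}$.

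I would then record these two $N=1$ contributions. In the case $\sum\lceil\cdot\rceil=0,\ \#\{\cdot<0\}=1$, the equality $\sum_{i}\lceil\langle D^{S}_{i},d\rangle\rceil=\sum_{i}\langle D^{S}_{i},d\rangle=\langle\rho^{S}_{\mathcal{X}},d\rangle=0$ forces all $\langle D^{S}_{i},d\rangle\in\mathbb{Z}$, so $v(d)=0$ and $\textbf{1}_{v(d)}=\textbf{1}$, while $\prod_{i:\langle D^{S}_{i},d\rangle<0}D_{i}$ collapses to the single divisor $D_{i}$; because $D_{i}=0$ for $m+1\leq i\leq m+l$, only a negative index $i\in\{1,\ldots,m\}$ survives, giving the middle sum $\sum_{i=1}^{m}D_{i}\sum C_{d}\langle D^{S}_{j},d\rangle y^{d}$. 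In the case $\sum\lceil\cdot\rceil=1,\ \#\{\cdot<0\}=0$, the product over negative indices is empty, leaving the last sum $\sum C_{d}\langle D^{S}_{j},d\rangle y^{d}\textbf{1}_{v(d)}$. Assembling the $d=0$ term with these two sums reproduces the stated formula.

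The only delicate point, and the step I would carry out most carefully, is the bookkeeping of the two-term factor $\left(D_{j}/z+\langle D^{S}_{j},d\rangle\right)$ against the order count of the product factor: one must check that precisely the $z^{-1}$ piece is retained, that the degenerate class $d=0$ enters solely through $D_{j}/z$ to yield the leading $D_{j}$, and that the restriction of the middle sum to $i\leq m$ is forced by the vanishing $D_{i}=0$ for $i>m$ rather than by any extra constraint on $d$. Everything else is a direct transcription of the expansion and the case analysis established above.
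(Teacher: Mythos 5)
Your proposal is correct and follows essentially the same route as the paper: read $\tilde{D}_{j}$ off as the $z^{-1}$ coefficient of $e^{-\sum_{i=1}^{r}p_{i}\log y_{i}/z}\bigl(\sum_{i=1}^{r+l}m_{ij}\,y_{i}\partial/\partial y_{i}\bigr)I_{\mathcal{X}}$, expand each product factor to leading order $z^{-N}$ with coefficient $C_{d}\prod_{i:\langle D^{S}_{i},d\rangle<0}D_{i}$, and run the same three-case analysis ($d=0$; ceiling sum zero with one negative integer pairing; ceiling sum one with none) under the weak Fano hypothesis. Your additional bookkeeping --- that the $d=0$ term enters only through $D_{j}/z$ since $\langle D^{S}_{j},0\rangle=0$, and that the middle sum restricts to $i\leq m$ because $D_{i}=0$ for $m+1\leq i\leq m+l$ --- simply makes explicit what the paper leaves implicit (in the paper's setting those extended indices in fact satisfy $\langle D^{S}_{i},d\rangle\geq 0$ for all $d\in\mathbb{K}_{\text{eff},\mathcal{X}}$, so both justifications lead to the same formula).
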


\section{Seidel elements corresponding to Box elements}
Consider the box element $s_{j}\in Box(\boldsymbol{\Sigma})$, 
such that 
\[
\bar{s}_{j}=\sum\limits_{i=1}^{m}c_{ji}\bar{b}_{i}\in \textbf{N}_{\mathbb{Q}},\quad \text{for some}\quad 0\leq c_{ji}<1.
\] 
Let $n_{j}$ be the least common denominator of 
$\{c_{ji}\}_{i=1}^{m}$, we define a 
$\mathbb{C}^{\times}$-action on $\mathcal{U}^{S}\times (\mathbb{C}^{2}\setminus \{0\})$ by 
\[
(z_{1},\ldots,z_{m+l},u,v)\mapsto (t^{-c_{j1}n_{j}}z_{1},\ldots,t^{-c_{jm}n_{j}}z_{m},z_{m+1},\ldots,z_{m+l},t^{n_{j}}u,t^{n_{j}}v), \quad t\in \mathbb{C}^{\times}.
\]
Hence we have an associated bundle 
\[
\mathcal{E}_{m+j}=\mathcal{U}^{S}\times (\mathbb{C}^{2}\setminus \{0\})/G^{S}\times \mathbb{C}^{\times}
\] 
over $\mathbb{CP}^{1}\times B\mu_{n_{j}} $ 
with $\mathcal{X}$ being the fiber. Furthermore, $\mathcal{E}_{m+j}$ can also be considered as a bundle over $\mathbb{CP}^{1}$, since there is a natural projection 
\[
\mathbb{CP}^{1}\times B\mu_{n_{j}}\rightarrow \mathbb{CP}^{1}.
\]

We can identify $H^{2}(\mathcal{E}_{m+j};\mathbb{Z})$ with $H^{2}(\mathcal{X};\mathbb{Z})\oplus \mathbb{Z}$, where the second summand 
\[
\mathbb{Z}\cong Pic(\mathbb{CP}^{1}\times B\mu_{n_{j}}),
\] 
and we have the following short exact sequence from remark 5.5 of \cite{FMN}:
\begin{equation}
0\longrightarrow Pic(\mathbb{CP}^{1})\longrightarrow Pic(\mathbb{CP}^{1}\times B\mu_{n_{j}})\longrightarrow \mathbb{Z}/n_{j}\mathbb{Z}\longrightarrow 0
\end{equation}
We identify an element of $Pic(\mathbb{CP}^{1})$ with its image in $Pic(\mathbb{CP}^{1}\times B\mu_{n_{j}})$ under the above map. Then the weights of $G^{S}\times \mathbb{C}^{\times}$ defining $\mathcal{E}_{m+j}$ are given by

\[
\hat{D}^{S}_{i}=(D^{S}_{i},-c_{ji}n_{j}), \quad \text{for}\quad 1\leq i \leq m; \quad \hat{D}^{S}_{m+j}=(D^{S}_{m+j},0)\quad \text{for}\quad 1\leq j \leq l;
\] 
\[
\hat{D}^{S}_{m+l+1}=\hat{D}^{S}_{m+l+2}=(0,n_{j}).
\]

The fan of $\mathcal{E}_{m+j}$ is contained in 
$N_{\mathbb{Q}}\oplus \mathbb{Q}$. 
The 1-skeleton is given by 
\begin{equation}
\hat{b}_{i}=(b_{i},0),\text{ for }1\leq i \leq m;\quad \hat{b}_{m+1}=(0,1);\quad \hat{b}_{m+2}=(s_{j},-1).
\end{equation}
Let $E_{m+j}$ be the coarse moduli space of 
$\mathcal{E}_{m+j}$. 
Then $E_{m+j}$ is an $X$-bundle over 
$\mathbb{CP}^{1}$. 
The Seidel element is defined as in equation (\ref{seidelelement}).

We set 
\[
p_{0}:=(0,1)\in H^{2}(E_{m+j})\cong H^{2}(X)\oplus Pic(\mathbb{CP}^{1}),
\] 
a nef integral basis $\{p_{1},\ldots,p_{r}\}$ of $H^{2}(X;\mathbb{Q})$ 
can be lifted to a nef integral basis 
$\{p_{0},p_{1},\ldots,p_{r}\}$ of 
$H^{2}(E_{m+j};\mathbb{Q})$ such that the lift of 
$p_{i}$ vanishes on the section class $[\sigma_{0}]$. 
There is an isomorphism between 
$H^{2}(E_{m+j};\mathbb{Q})$ and $H^{2}(\mathcal{E}_{m+j};\mathbb{Q})$, 
by abuse of notation, we identify $p_{i}$ with its image in 
$H^{2}(\mathcal{E}_{m+j};\mathbb{Q})$,
 for $0\leq i \leq r$.  
Let $p_{1}^{S},\ldots,p_{r+l}^{S}$ 
be an integral basis of 
$\mathbb{L}^{S\vee}$, 
such that $p_{i}$ is the image of 
$p_{i}^{S}$ in $\mathbb{L}^{\vee}\otimes\mathbb{Q}$, under the canonical splitting of (\ref{L^S-splitting}).  
Let $p_{0}^{S}, p_{1}^{S},\ldots,p_{r+l}^{S}$ 
be an integral basis of $\mathbb{L}^{S\vee}\oplus \mathbb{Z}$ and $p_{0}$ be the image of
\[ 
p_{0}^{S}=\hat{D}_{m+l+1}^{S}=\hat{D}_{m+l+2}^{S}
\]
in $(\mathbb{L}^{\vee}\oplus\mathbb{Z})\otimes \mathbb{R}$. Therefore $p_{r+1},\ldots,p_{r+l}$ are zero.

As in the toric divisor case, we have the following expansion of the $I$-function:
\begin{align}
&  I_{\mathcal{E}_{m+j}}(y,z)=\\
&\notag e^{\sum\limits_{i=0}^{r}p_{i}logy_{i}/z}
\left(1+z^{-1}\left(\sum\limits_{i=0}^{r}g_{i}^{(m+j)}(y)p_{i}+\tau^{(m+j)}_{tw}(y)\right)+z^{-2}\left(\sum\limits_{n=0}^{2}G_{n}^{(m+j)}(y)y_{0}^{n}\right)+O(z^{-3})\right),
\end{align}
and use the same argument as in lemma \ref{g^(j)-indep-y_0} and lemma \ref{tau-indep-y_0}, we can show that $g_{i}^{(m+j)}(y)$ and $\tau^{(m+j)}_{tw}(y)$ are independent from $y_{0}$, for $1\leq i \leq r$ and $1\leq j \leq l$ . Moreover, for each $j\in\{1,\ldots,l\}$, we have 
\[
g_{i}^{(m+j)}(y_{0},\ldots,y_{r+l})=g_{i}(y_{1},\ldots,y_{r+l})\quad  \text{for} \quad i=1,\ldots,r.
\]
And 
\[
\tau_{tw}^{(m+j)}(y)=\tau_{tw}(y).
\]
We will also obtain the following theorem.
\begin{thm}
The Seidel element $\tilde{S}_{m+j}$ associated to the box element $s_{j}$ is given by 
\begin{equation}
\tilde{S}_{m+j}(\tau(y)):=\tilde{S}_{m+j}(\tau^{(m+j)}(y))=exp\left(-g_{0}^{(m+j)}(y)\right)\imath^{*}(G_{1}^{(m+j)}(y)).
\end{equation}
\end{thm}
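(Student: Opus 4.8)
The plan is to run the argument of Theorem~\ref{Seidel-I-function} essentially verbatim, now with the associated bundle $\mathcal{E}_{m+j}$ (over $\mathbb{CP}^{1}\times B\mu_{n_{j}}$, viewed as an $\mathcal{X}$-bundle via the projection $\mathbb{CP}^{1}\times B\mu_{n_{j}}\to\mathbb{CP}^{1}$) in place of $\mathcal{E}_{j}$. First I would record the stated expansion
\[
I_{\mathcal{E}_{m+j}}(y,z)=e^{\sum_{i=0}^{r}p_{i}logy_{i}/z}\left(1+z^{-1}\Bigl(\sum_{i=0}^{r}g_{i}^{(m+j)}(y)p_{i}+\tau^{(m+j)}_{tw}(y)\Bigr)+z^{-2}\sum_{n=0}^{2}G_{n}^{(m+j)}(y)y_{0}^{n}+O(z^{-3})\right)
\]
and apply the analogues of Propositions~\ref{g^(j)-indep-y_0} and~\ref{tau-indep-y_0}, which the text has already obtained ``by the same argument'', to conclude that $g_{i}^{(m+j)}$ for $1\le i\le r$ and $\tau^{(m+j)}_{tw}$ are independent of $y_{0}$, with $g_{i}^{(m+j)}=g_{i}$ and $\tau^{(m+j)}_{tw}=\tau_{tw}$. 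This independence is exactly what legitimizes the notation $\tilde{S}_{m+j}(\tau(y)):=\tilde{S}_{m+j}(\tau^{(m+j)}(y))$ on the left of the claim, since the right-hand Seidel element then does not depend on $y_{0}$ or $q_{0}$.

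Next I would invoke Proposition~\ref{J-function-Seidel}, which is stated for an arbitrary $\mathbb{C}^{\times}$-action and therefore applies to the box action: it identifies the Seidel element $\tilde{S}_{m+j}$ as $\imath^{*}$ of the coefficient of $q_{0}/z^{2}$ in $exp\bigl(-\sum_{i=0}^{r}p_{i}logq_{i}/z\bigr)J_{\mathcal{E}_{m+j}}$. To replace $J_{\mathcal{E}_{m+j}}$ by $I_{\mathcal{E}_{m+j}}$ I would verify, as in Lemma~\ref{weak-fano-lemma}, that $\rho^{S}_{\mathcal{E}_{m+j}}\in cl(C^{S}_{\mathcal{E}_{m+j}})$, so that the mirror theorem gives $J_{\mathcal{E}_{m+j}}(\tau^{(m+j)}(y),z)=I_{\mathcal{E}_{m+j}}(y,z)$; here the $p_{0}^{S}$-component of $\rho^{S}_{\mathcal{E}_{m+j}}$ equals $n_{j}\bigl(2-\sum_{i}c_{ji}\bigr)>0$, which is where $age(s_{j})\le 1$ from Assumption~\ref{S-ext-in-box} enters.

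I would then finish with the same coordinate change. Using $logq_{i}=logy_{i}+g_{i}^{(m+j)}(y)$ for $i=0,\ldots,r$ to absorb the $\sum_{i=0}^{r}g_{i}^{(m+j)}p_{i}$ correction into the prefactor, the expansion becomes
\[
I_{\mathcal{E}_{m+j}}(y,z)=e^{\sum_{i=0}^{r}p_{i}logq_{i}/z}\left(1+z^{-1}\tau^{(m+j)}_{tw}(y)+z^{-2}\sum_{n=0}^{2}G_{n}^{(m+j)}(y)y_{0}^{n}+O(z^{-3})\right).
\]
Multiplying by $exp\bigl(-\sum_{i=0}^{r}p_{i}logq_{i}/z\bigr)$ and substituting $y_{0}=q_{0}\,exp(-g_{0}^{(m+j)})$, so that $y_{0}^{n}=q_{0}^{n}exp(-ng_{0}^{(m+j)})$, the coefficient of $q_{0}^{1}z^{-2}$ is $exp(-g_{0}^{(m+j)})G_{1}^{(m+j)}$; applying $\imath^{*}$ yields $\tilde{S}_{m+j}(\tau(y))=exp(-g_{0}^{(m+j)}(y))\,\imath^{*}(G_{1}^{(m+j)}(y))$, as claimed.

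The one place the box case is not a literal copy of the divisor case, and which I expect to be the real obstacle, is controlling the $\mu_{n_{j}}$-gerbe twisting. Because the $\mathbb{C}^{\times}$-weights are scaled by $n_{j}$ and $p_{0}^{S}=\hat{D}^{S}_{m+l+1}=\hat{D}^{S}_{m+l+2}=(0,n_{j})$ while $p_{0}=(0,1)$, I must pin down the normalization of $p_{0}$, $p_{0}^{S}$ and the section/Novikov variable $q_{0}$ so that ``the coefficient of $q_{0}/z^{2}$'' genuinely extracts the maximal-section contribution and so that $G_{1}^{(m+j)}$ is the coefficient of the integral power $y_{0}^{1}$. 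The latter rests on the analogue of the integrality lemma preceding Proposition~\ref{tau-indep-y_0}, namely $\langle p_{0}^{S},\beta\rangle\in\mathbb{Z}_{\ge 0}$ for $\beta\in\mathbb{K}_{\text{eff},\mathcal{E}_{m+j}}$; its proof now uses that no simplicial cone of the fan of $\mathcal{E}_{m+j}$ can contain both $\hat{b}_{m+1}=(0,1)$ and $\hat{b}_{m+2}=(s_{j},-1)$, since their sum $(s_{j},0)=\sum_{i}c_{ji}\hat{b}_{i}$ already lies in the span of the $\hat{b}_{i}$. Once these normalizations are fixed, every remaining manipulation is formally identical to the proof of Theorem~\ref{Seidel-I-function}.
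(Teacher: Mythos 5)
Your proposal is correct and is essentially the paper's own argument: the paper proves this theorem precisely by rerunning the proof of Theorem~\ref{Seidel-I-function} for $\mathcal{E}_{m+j}$, i.e.\ the box-case analogues of Propositions~\ref{g^(j)-indep-y_0} and~\ref{tau-indep-y_0}, the mirror theorem applied to $\mathcal{E}_{m+j}$ (via the weak Fano check), Proposition~\ref{J-function-Seidel}, and the coordinate change $\log q_{0}=\log y_{0}+g_{0}^{(m+j)}(y)$. Your extra care with the gerbe normalization ($p_{0}^{S}=(0,n_{j})$ versus $p_{0}=(0,1)$, where $\operatorname{age}(s_{j})\leq 1$ enters) and with the integrality lemma for $\langle p_{0}^{S},\beta\rangle$ correctly fills in details the paper leaves implicit.
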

Using the same computation as in the toric divisor case, we can compute the correction coefficient $g_{0}^{(m+j)}$:

\begin{lemma}
The function $g_{0}^{(m+j)}$ is given by 
\begin{equation}\label{correction coefficient 2}
g_{0}^{(m+j)}(y_{1},\ldots,y_{r+l})=\sum\limits_{1\leq k\leq m, k\not\in I_{j}^{S}.}\sum\limits_{\substack{\langle D^{S}_{i},d\rangle\in \mathbb{Z}, 1 \leq i\leq m+l\\ \langle \rho^{S}_{\mathcal{X}},d\rangle=0\\ \langle D_{k}^{S},d\rangle <0\\ \langle D^{S}_{i},d\rangle \geq 0, \forall i \neq k}} c_{jk}\frac{(-1)^{-\langle D_{k}^{S},d\rangle}\left(-\langle D_{k}^{S},d\rangle-1\right)!}{\prod_{i\neq k}\langle D_{i}^{S},d\rangle !}y^{d},
\end{equation}
where $I_{j}^{S}$ is the "anticone" of the cone containing $s_{j}$.
\end{lemma}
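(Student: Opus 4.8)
The plan is to mirror verbatim the computation of $g_0^{(j)}$ in Section \ref{computation-of-g}, feeding in the weight data $\hat{D}_i^S=(D_i^S,-c_{ji}n_j)$ for $1\le i\le m$, $\hat{D}_{m+j'}^S=(D_{m+j'}^S,0)$, and $\hat{D}_{m+l+1}^S=\hat{D}_{m+l+2}^S=p_0^S=(0,n_j)$. First I would isolate the summands of $I_{\mathcal{E}_{m+j}}$ that can contribute to $g_0^{(m+j)}$. Since $g_0^{(m+j)}$ is the coefficient of $z^{-1}p_0$ and lies in $H^2(\mathcal{E}_{m+j})$ rather than in the twisted part, only indices $\beta$ with $v(\beta)=0$ survive; by the definition of $\mathbb{K}_{\mathrm{eff},\mathcal{E}_{m+j}}$ this forces $\langle\hat{D}_i^S,\beta\rangle\in\mathbb{Z}$ for every $i$, so the product factor collapses to the same shape as in (\ref{expansion-C^beta}), namely $C_\beta\,z^{-(\sum_i\langle\hat{D}_i^S,\beta\rangle+\#\{i:\langle\hat{D}_i^S,\beta\rangle<0\})}\prod_{i:\langle\hat{D}_i^S,\beta\rangle<0}\hat{D}_i+\mathrm{h.o.t.}$, with $C_\beta$ the explicit product of factorials.

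Next I would determine which $\beta$ feed the $z^{-1}$ coefficient, i.e.\ those with $\sum_i\langle\hat{D}_i^S,\beta\rangle+\#\{i:\langle\hat{D}_i^S,\beta\rangle<0\}\le 1$. Using $\rho^S_{\mathcal{E}_{m+j}}\in cl(C^S_{\mathcal{E}_{m+j}})$ --- which follows from $\rho^S_{\mathcal{X}}\in cl(C^S_{\mathcal{X}})$ as in Lemma \ref{weak-fano-lemma} --- the same trichotomy as in the divisor case eliminates the first two cases and leaves only $\sum_i\langle\hat{D}_i^S,\beta\rangle=0$ with a single negative index $k$. Here I would record the identity $\sum_{i=1}^{m+l+2}\langle\hat{D}_i^S,\beta\rangle=\langle\rho^S_{\mathcal{X}},d\rangle+(2-\mathrm{age}(s_j))\langle p_0^S,\beta\rangle$, where $d$ is the projection of $\beta$; since $\mathrm{age}(s_j)\le 1$ the coefficient $2-\mathrm{age}(s_j)$ is strictly positive, so vanishing of the total sum forces both $\langle\rho^S_{\mathcal{X}},d\rangle=0$ and $\langle p_0^S,\beta\rangle=0$, exactly as before. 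Consequently $\langle\hat{D}_i^S,\beta\rangle=\langle D_i^S,d\rangle$ for all $i$, the negative index $k$ lies in $\{1,\dots,m\}$ with $\langle D_k^S,d\rangle<0$ and $\langle D_i^S,d\rangle\ge 0$ for $i\ne k$, and $C_\beta$ reduces to the constant attached to $d$.

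The one genuinely new point --- and the step where I expect to have to be careful --- is the extraction of the $p_0$-coefficient from $\prod_{i:\langle\hat{D}_i^S,\beta\rangle<0}\hat{D}_i=\hat{D}_k$. In the divisor case only $\hat{D}_j=D_j-p_0$ carried a $p_0$-component, contributing the constant $-1$; here, dividing the weight relation $\hat{D}_i^S=(D_i^S,-c_{ji}n_j)$ through by $p_0^S=(0,n_j)$ shows $\hat{D}_i=D_i-c_{ji}p_0$, so every $\hat{D}_i$ with $i\notin I_j^S$ carries a $p_0$-component and the $p_0$-coefficient of $\hat{D}_k$ is $-c_{jk}$, which vanishes precisely when $k\in I_j^S$. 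Multiplying $C_\beta$ by $-c_{jk}$ absorbs one sign, turning $(-1)^{-\langle D_k^S,d\rangle-1}$ into $(-1)^{-\langle D_k^S,d\rangle}$ and producing the prefactor $c_{jk}$; summing over the admissible indices $k\in\{1,\dots,m\}\setminus I_j^S$ and over the lattice points $d$ subject to the constraints above then yields formula (\ref{correction coefficient 2}). The main obstacle is thus essentially bookkeeping: correctly tracking that, unlike the divisor case, the $p_0$-weight is now spread across all the divisors meeting $s_j$, and verifying the normalization by $n_j$ so that the bare coefficients $c_{jk}$ (rather than $c_{jk}n_j$) appear.
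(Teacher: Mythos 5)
Your proposal is correct and takes essentially the same route as the paper's own proof: it reruns the computation of $g_0^{(j)}$ from Section \ref{computation-of-g} with the weight data of $\mathcal{E}_{m+j}$, the only genuinely new point being that every $\hat{D}_k=D_k-c_{jk}p_0$ with $1\le k\le m$, $k\notin I_j^S$ now carries the $p_0$-component $-c_{jk}$, which is exactly the modification the paper makes and is what produces the prefactor $c_{jk}$ and the outer sum over $k$. Your identity $\sum_{i=1}^{m+l+2}\langle\hat{D}_i^S,\beta\rangle=\langle\rho^S_{\mathcal{X}},d\rangle+\left(2-\mathrm{age}(s_j)\right)\langle p_0^S,\beta\rangle$, whose coefficient is positive because $\mathrm{age}(s_j)\le 1$ by Assumption \ref{S-ext-in-box}, correctly fills in a step that the paper leaves implicit when it asserts the argument is ``almost the same'' as the toric divisor case.
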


\begin{proof}
The argument is almost the same as the argument in section \ref{computation-of-g}. 
The only change we need to make is 
the paragraph above lemma \ref{expression-of-g}:
 
In this case, $g_{0}^{(m+j)}$ 
is the coefficient corresponding to 
$p_{0}$ and elements in $\{\hat{D}_{1},\ldots,\hat{D}_{m}\}$ that contain $p_{0}$ are precisely these elements: 
\[
\hat{D}_{k}=\langle D_{k},-c_{jk}n_{j}\rangle=D_{k}-c_{jk}p_{0}, \quad
\text{for} \quad 1\leq k \leq m\quad \text{and}\quad k\not\in I^{S}_{j}.
\] 
Therefore, by expression (\ref{expansion-C^beta}) and (\ref{third-case}), we must have $\langle D^{S}_{k},d\rangle<0$ for exactly one $k$ in $\{k\in \mathbb{Z}| 1\leq k \leq m$ and $k\not\in I^{S}_{j}\}$. 
\end{proof}

Moreover, by mimicking the computation in lemma \ref{lemma:pde}, we have

\begin{lemma}
the $I$-function of $\mathcal{E}_{m+j}$ satisfies the following differential equation:
\begin{equation}
z\frac{\partial}{\partial y_{0}}\left(y_{0}\frac{\partial}{\partial y_{0}}\right)I_{\mathcal{E}_{j}}=y^{-D_{m+j}^{S\vee}}\left(\sum\limits^{r+l}_{i=1}m_{ij}\left(y_{i}\frac{\partial}{\partial y_{i}}\right)-y_{0}\frac{\partial}{\partial y_{0}}\right)I_{\mathcal{E}_{j}},
\end{equation}
where $D_{m+j}^{S\vee}\in \mathbb{L}^{S}\otimes \mathbb{Q}$ is defined by (\ref{D-dual}).
\end{lemma}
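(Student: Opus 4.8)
The plan is to follow the proof of Lemma~\ref{lemma:pde} in structure, comparing the coefficient of $y^{\beta}\mathbf{1}_{v(\beta)}$ on the two sides of the asserted identity for each $\beta\in\mathbb{K}_{\mathcal{E}_{m+j}}$, while tracking the two features that distinguish the box case from the toric-divisor case: the fibre rays now carry vertical weight $n_{j}$ (so the base is $\mathbb{CP}^{1}\times B\mu_{n_{j}}$) and the rays $\hat{D}^{S}_{i}=(D^{S}_{i},-c_{ji}n_{j})$ carry the fractional vertical weights $-c_{ji}n_{j}$. First I would expand the left-hand side term by term. Writing $z\,\partial/\partial y_{0}\circ(y_{0}\,\partial/\partial y_{0})=(z/y_{0})(y_{0}\,\partial/\partial y_{0})^{2}$ and denoting by $A_{\beta}$ the product factor of a summand, the action on $e^{\sum_{i=0}^{r}p_{i}\log y_{i}/z}A_{\beta}y^{\beta}\mathbf{1}_{v(\beta)}$ produces, after using $p_{0}^{2}=0$, the factor $\tfrac{1}{y_{0}}\bigl(2p_{0}\langle p_{0}^{S},\beta\rangle+\langle p_{0}^{S},\beta\rangle^{2}z\bigr)$, exactly as in the left-hand computation of Lemma~\ref{lemma:pde}. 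On the right-hand side the first-order operator $\sum_{i}m_{i,m+j}(y_{i}\,\partial/\partial y_{i})-y_{0}\,\partial/\partial y_{0}$ acts only on $e^{\sum p_{i}\log y_{i}/z}y^{\beta}$, and the $\mathcal{X}$-monomial prefactor $y^{-D_{m+j}^{S\vee}}$ then reindexes the resulting sum.

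The crucial step is to identify the reindexing forced by matching degrees. The operator on the left lowers the $y_{0}$-exponent by one, whereas the operator and the prefactor $y^{-D_{m+j}^{S\vee}}$ on the right preserve it; hence the coefficient of a fixed monomial on the left, read off from the $\beta$-summand of $I_{\mathcal{E}_{m+j}}$, must be matched against a shifted summand on the right, with the shift $\xi$ characterized by $\langle p_{0}^{S},\xi\rangle=1$ and $\mathcal{X}$-component $-D_{m+j}^{S\vee}$, i.e. $y^{\xi}=y_{0}\,y^{-D_{m+j}^{S\vee}}$. Geometrically $\xi$ is the maximal section class $[\sigma_{0}]\in H_{2}(\mathcal{E}_{m+j})$ of the box action; the fractional pairings $\langle D^{S}_{i},D^{S\vee}_{m+j}\rangle=-c_{ji}$ of (\ref{D-dual}) are exactly what make its $\mathcal{X}$-component nontrivial, in contrast to the divisor case of Lemma~\ref{lemma:pde}, where $y^{\xi}=y_{0}$ and no prefactor appears. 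Setting $\beta'=\beta+\xi$, this is precisely the data ensuring $(y^{\beta'}/y_{0})\mathbf{1}_{v(\beta')}=y^{-D_{m+j}^{S\vee}}y^{\beta}\mathbf{1}_{v(\beta)}$ and $\beta\in\mathbb{K}_{\mathcal{E}_{m+j}}$ if and only if $\beta'\in\mathbb{K}_{\mathcal{E}_{m+j}}$.

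With the reindexing in hand, I would relate the product factor of the $\beta'$-summand to that of the $\beta$-summand through the telescoping identity (\ref{expansion-C^beta}) for the products $\prod_{k}\bigl(\hat{D}_{i}+(\langle\hat{D}^{S}_{i},\beta\rangle-k)z\bigr)$. Only the two fibre rays $i=m+l+1,m+l+2$ (which carry $p_{0}$) and the rays of nonzero vertical weight are affected; since $p_{0}^{2}=0$, all higher powers of $p_{0}$ drop and the change collapses to division by $\bigl(p_{0}+(\langle p_{0}^{S},\beta\rangle+1)z\bigr)^{2}$ against the numerator $2p_{0}(\langle p_{0}^{S},\beta\rangle+1)+(\langle p_{0}^{S},\beta\rangle+1)^{2}z$ on the left, to be matched against the $p_{0}$- and $\langle p_{0}^{S},\beta\rangle$-dependent factor produced by the first-order operator on the right. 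Matching these coefficientwise, exactly as at the end of the proof of Lemma~\ref{lemma:pde}, yields the equality of coefficients of $y^{\beta}\mathbf{1}_{v(\beta)}$ and hence the partial differential equation.

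I expect the main obstacle to be the bookkeeping of the twisted sectors under the shift in the presence of the fractional weights. Concretely, one must check that adding the $\mathcal{X}$-component $-D_{m+j}^{S\vee}$, whose pairings $-c_{ji}$ with the $D^{S}_{i}$ are genuinely fractional, preserves the fractional parts $\{-\langle\hat{D}^{S}_{i},\beta\rangle\}$ that determine $\mathbf{1}_{v(\beta)}$, and that the ceiling truncations $\lceil\langle\hat{D}^{S}_{i},\beta\rangle\rceil$ in the product factor shift consistently; this is the point at which the weight $n_{j}$ and the coefficients $c_{ji}$ enter and force the extra factor $y^{-D_{m+j}^{S\vee}}$ relative to the divisor case. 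A secondary check is that $\xi$ is realized by an actual integral section class, which is where the short exact sequence relating $\Pic(\mathbb{CP}^{1})$ and $\Pic(\mathbb{CP}^{1}\times B\mu_{n_{j}})$ is needed.
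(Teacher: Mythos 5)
Your proposal is correct and is essentially the paper's own proof: the paper disposes of this lemma in one line by rerunning the coefficient comparison of Lemma~\ref{lemma:pde} with the shifted class $\beta'=\beta+[\sigma_{0}]-D^{S\vee}_{m+j}$, which is exactly your $\xi$ (characterized by $y^{\xi}=y_{0}\,y^{-D^{S\vee}_{m+j}}$), and your telescoping/matching of product factors and your check that the twisted sectors $\mathbf{1}_{v(\beta)}$ and membership in $\mathbb{K}_{\mathcal{E}_{m+j}}$ are preserved under the shift are the content of ``everything else follows.'' One small correction to your geometric aside: $\xi$ is not the maximal section class $[\sigma_{0}]$ itself (the lifts $p_{i}$, $i\geq 1$, vanish on $[\sigma_{0}]$, so $y^{[\sigma_{0}]}=y_{0}$), but differs from it by the fiber class $-D^{S\vee}_{m+j}$; this does not affect your argument, since you only use the defining property $y^{\xi}=y_{0}\,y^{-D^{S\vee}_{m+j}}$.
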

\begin{proof}
The proof is almost identical to the proof of lemma \ref{lemma:pde}, except, this time, we will need to choose $\beta^{'}=\beta+[\sigma_{0}]-D^{S\vee}_{m+j}$. Then everything else follows. 
\end{proof}
Using this lemma, following the argument in the toric divisor case, we conclude
\begin{theorem}
The Seidel element $\tilde{S}_{m+j}$ 
corresponding to the box element 
$s_{j}$, with 
\[
\bar{s}_{j}=\sum\limits_{i=1}^{m}c_{ji}\bar{b}_{i},\quad \text{for some} \quad 0\leq c_{ji}<1,
\]  
is given by
\begin{equation}
\tilde{S}_{m+j}(\tau^{(m+j)}(y))=exp\left(-g_{0}^{(m+j)}\right)y^{-D_{m+j}^{S\vee}}\tilde{D}_{m+j}(y),
\end{equation}
where $\tilde{D}_{m+j}(y)$ is the corresponding Batyrev element. Moreover, 
\begin{equation}
\tilde{D}_{m+j}=\sum\limits_{i=1}^{m}D_{i}\sum\limits_{\substack{\langle\rho^{S}_{\mathcal{X}},d\rangle=0\\\langle D^{S}_{i},d\rangle \in \mathbb{Z}_{<0}\\ \langle D_{k}^{S},d\rangle \in \mathbb{Z}_{\geq 0}, \forall k\neq i}}C_{d}\langle D^{S}_{m+j},d\rangle y^{d}+\sum\limits_{\substack{ \sum_{i=1}^{m+l}\lceil \langle D^{S}_{i},d\rangle \rceil= 1\\\langle D^{S}_{i},d\rangle \not\in \mathbb{Z}_{<0}, \forall i}}C_{d}\langle D^{S}_{m+j},d\rangle y^{d}\textbf{1}_{v(d)},
\end{equation}
and 
\begin{equation}
C_{d}=\prod\limits_{i:\langle D^{S}_{i},d\rangle <0}\prod\limits_{\langle D^{S}_{i},d\rangle <k<0}\left(\langle D^{S}_{i},d\rangle -k\right)\prod\limits_{i:\langle D^{S}_{i},d\rangle >0}\prod\limits_{0\leq k<\langle D^{S}_{i},d\rangle}\left(\langle D^{S}_{i},d\rangle -k\right)^{-1}.
\end{equation}
\end{theorem}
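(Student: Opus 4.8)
The plan is to mirror the toric-divisor argument, feeding the modified $I$-function PDE of the preceding lemma into the already-established identity $\tilde S_{m+j}(\tau(y))=\exp(-g_0^{(m+j)})\,\imath^*(G_1^{(m+j)}(y))$; the single new ingredient is the monomial prefactor $y^{-D_{m+j}^{S\vee}}$ on the right-hand side of that PDE. First I would substitute the $z^{-1}$-expansion of $I_{\mathcal{E}_{m+j}}$ into the operator $z\frac{\partial}{\partial y_0}\left(y_0\frac{\partial}{\partial y_0}\right)$ and read off, exactly as in the display following lemma \ref{lemma:pde}, that after dividing by $e^{\sum_{i=0}^r p_i\log y_i/z}$ the coefficient of $z^{-1}$ is $\sum_{n=1}^2 G_n^{(m+j)}n^2y_0^{n-1}$, whose value at $y_0=0$ (that is, after $\imath^*$) is $\imath^*(G_1^{(m+j)})$.

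Next I would treat the right-hand side. Since $y^{-D_{m+j}^{S\vee}}$ is a monomial in $y_1,\dots,y_{r+l}$ alone, it is independent of both $y_0$ and $z$ and commutes with $\imath^*$; using $\imath^* I_{\mathcal{E}_{m+j}}=I_{\mathcal X}+O(y_0)$ I would find that the $z^{-1}$-coefficient of the pulled-back right-hand side equals $y^{-D_{m+j}^{S\vee}}$ times the $z^{-1}$-coefficient of $e^{-\sum_{i=1}^r p_i\log y_i/z}\left(\sum_{i=1}^{r+l}m_{i,m+j}\,y_i\frac{\partial}{\partial y_i}\right)I_{\mathcal X}$, and the latter is $\tilde D_{m+j}(y)$ by the definition of the Batyrev element ($\tilde D_{m+j}=\sum_i m_{i,m+j}\tilde p_i^{S}$ with $\tilde p_i^S=\partial\tau/\partial\log y_i$). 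Equating the two coefficients gives $\imath^*(G_1^{(m+j)}(y))=y^{-D_{m+j}^{S\vee}}\tilde D_{m+j}(y)$; substituting this into the Seidel-element formula above produces the first displayed identity of the theorem.

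For the explicit form of $\tilde D_{m+j}$ I would repeat the three-case analysis used for $\tilde D_j$: applying $\sum_i m_{i,m+j}\,y_i\frac{\partial}{\partial y_i}$ to $I_{\mathcal X}$ attaches the factor $\left(\frac{D_{m+j}}{z}+\langle D_{m+j}^S,d\rangle\right)$ to the summand indexed by $d$, and a summand contributes to $z^{-1}$ precisely when $\sum_i\lceil\langle D_i^S,d\rangle\rceil+\#\{i:\langle D_i^S,d\rangle\in\mathbb Z_{<0}\}\le 1$. The crucial simplification is that $D_{m+j}=0$ in $H^2(\mathcal X)$ because $D_i=0$ for $m<i\le m+l$; hence the $d=0$ contribution, which furnished the leading class $D_j$ in the divisor case, disappears, only the two summations survive with $\langle D_j^S,d\rangle$ replaced by $\langle D_{m+j}^S,d\rangle$, and $C_d$ is exactly \ref{C_d}, yielding the stated formula.

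The hard part will be the bookkeeping hidden in the PDE lemma itself, which I am permitted to assume: because the box element carries fractional weights $c_{ji}$, the dual class $D_{m+j}^{S\vee}$ of \ref{D-dual} is only rational, so $y^{-D_{m+j}^{S\vee}}$ is a genuinely fractional monomial, and the reindexing $\beta\mapsto\beta+[\sigma_0]-D_{m+j}^{S\vee}$ must be checked to preserve $\mathbb K_{\mathcal E_{m+j}}$ and to satisfy $(y^{\beta'}/y_0)\textbf{1}_{v(\beta')}=y^{-D_{m+j}^{S\vee}}y^{\beta}\textbf{1}_{v(\beta)}$. Granting that lemma, the remaining steps above are the same routine coefficient extractions as in the toric-divisor case, so no further obstacle arises.
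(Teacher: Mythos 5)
Your proposal is correct and follows essentially the same route as the paper: the paper's own proof of this theorem is precisely "use the modified PDE lemma and follow the toric-divisor argument," i.e., equate the $z^{-1}$-coefficients of both sides of the PDE after pulling back by $\imath^{*}$ and setting $y_{0}=0$ to get $\imath^{*}(G_{1}^{(m+j)})=y^{-D_{m+j}^{S\vee}}\tilde{D}_{m+j}(y)$, then substitute into $\tilde{S}_{m+j}(\tau(y))=\exp(-g_{0}^{(m+j)})\imath^{*}(G_{1}^{(m+j)})$, with the explicit formula for $\tilde{D}_{m+j}$ coming from the three-case analysis already carried out for $1\leq j\leq m+l$, where the leading term drops because $D_{m+j}=0$ in $H^{2}(\mathcal{X})$. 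Your identification of the vanishing of the $d=0$ term and of the role of the fractional monomial $y^{-D_{m+j}^{S\vee}}$ matches the paper's reasoning exactly.
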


\end{document}